\documentclass[11pt]{article}
\usepackage[english]{babel}%
\usepackage[letterpaper,top=2cm,bottom=2cm,left=3cm,right=3cm,marginparwidth=1.75cm]{geometry}
\usepackage{amsmath}
\usepackage{amssymb}
\usepackage{amsthm}
\usepackage{paralist}
\usepackage[misc]{ifsym}
\usepackage{epsfig} %
\usepackage{epstopdf} %
\usepackage[colorlinks=true]{hyperref}
\allowdisplaybreaks

\newtheorem{theorem}{Theorem}[section]
\newtheorem{corollary}[theorem]{Corollary}
\newtheorem{lemma}[theorem]{Lemma}
\newtheorem{proposition}[theorem]{Proposition}

\newtheorem{definition}[theorem]{Definition}
\newtheorem{remark}[theorem]{Remark}

\usepackage{authblk}

\newcommand{\ind}{\,\mbox{d}}

\title{Characterizing nonlinear information in the linear sampling method for inverse medium scattering} %
\author{Lorenzo Audibert\thanks{
PRISME, EDF R\&D, 6 Quai Watier, 78400 Chatou, France and UMA, Inria, ENSTA Paris, Institut Polytechnique de Paris, 91120 Palaiseau, France. \texttt{(lorenzo.audibert@edf.fr})} ~and~ 
Shixu Meng\thanks{Department of Mathematical Sciences, The University of Texas at Dallas, 75025 Richardson,  USA. (\texttt{smeng@utdallas.edu})} }

\begin{document}
\maketitle

\begin{abstract}
This work is concerned with the nonlinear information in the linear sampling method for the inverse medium scattering problem. In addition to the well-known capability in shape characterization, we demonstrate that the imaging indicator represents  nonlinear information about the unknown contrast. We further demonstrate additional representations for this nonlinear information  using the generalized linear sampling method. We shed light on this new result using the Born model and analytical examples.

\end{abstract}

\section{Introduction}
Inverse scattering merits important applications in non-destructive evaluation, seismic imaging, ocean acoustic and many others. The goal  is to retrieve information about the scattering object from measurement data. However, this is a challenging problem since inverse scattering is intrinsically ill-posed and nonlinear. To avoid incorrect a priori knowledge of the scattering object and to facilitate computational efficiency, the so-called qualitative methods have attracted much attention. 
In particular, the linear sampling method \cite{ColtonKirsch96} and the factorization method \cite{Kirsch98}  play   important roles in inverse problems associated with \textit{shape characterization} such as inverse scattering problem and electrical impedance tomography  \cite{kirsch2008factorization}. The idea of linear sampling and factorization methods is to build an imaging indicator $I(z)$ such that $I(z) < \infty$ if and only if $z$ is inside the support of the scattering object. For a more comprehensive introduction to qualitative methods, we refer to \cite{cakoni2016qualitative,colton2012inverse,kirsch2008factorization} and the references therein.

The work \cite{Kirsch17} investigates the factorization method with its connection to the Born model, which demonstrates the same ill-posedness of the factorization method and the Born factorization method. Since ill-posedness and nonlinearity are two fundamental characteristics of inverse scattering problems, we are hereby motivated to investigate the nonlinear nature in the linear sampling method while taking into account both its ill-posedness and the Born approximation model.

For the inverse Born scattering, it is well-known that the unknown contrast can be uniquely reconstructed via linear methods. Recently in the spirit of increasing stability \cite{IN2022}, we mention the work \cite{meng23data} which investigates a low-rank structure tailored for inverse Born scattering and  proves stability result for the unique reconstruction with $L^2$ perturbations using the generalized prolate spheroidal wave functions. This may indicate that the linear sampling method is capable of reconstructing the unknown (or its linear transformation) for linear inverse problems. Indeed for
a class of linear inverse problems (including the Born inverse scattering), the work \cite{audibertmeng23} shows that a particular formulation  of the linear sampling method allows not only shape characterization but also parameter characterization; loosely speaking, the indicator $I(z)$ of \cite{audibertmeng23} represents an average of $1/q$ when $z$ belongs to the support of the unknown parameter $q$. However, it remains interesting and challenging to quantify  the imaging indicator  in the nonlinear case.

Further insights may also be drawn from the dual space method \cite{{colton1988}} which solves a set of linear ill-posed problems (based on the far-field equation) and uses nonlinear optimization to find the unknown contrast. Certain combination of  far-field equations (with certain right hand sides) may lead to certain nonlinear information about the unknown contrast. This work is also motivated by kernel machine \cite{Herbrichbook,Hofmann2008} and deep learning \cite{Goodfellowbook} where nonlinear regression can be achieved by mapping the features to a high-dimensional space and by solving a linear regression in the new feature space.  The linear sampling method, if being understood in the spirit of nonlinear regression, may perform a linear regression in a high-dimensional space and thus may obtain   nonlinear information about the unknown using $I(z)$.

Our contribution is to quantify clearly the nonlinear information about the unknown contrast for the linear sampling method. 
We prove that the linear sampling indicator $I(z)$ represents a nonlinear information about the unknown contrast supported in $\Omega$.  In particular, with the factorization of the far-field operator $\mathcal{F} = \mathcal{H}^* \mathcal{T} \mathcal{H}$, we propose to find solutions $g_{z,\alpha}$ to the far-field equation using a family of regularization schemes $\mathcal{R}_\alpha$ with parameters $\alpha>0$; classical regularizations such as Tikhonov regularization, singular value cut off regularization, and Landweber iteration are examples in this regularization scheme.  The proposed imaging indicator $I(z)$ inherits the standard shape characterization that  $I(z) < \infty$ if and only if $z$ belongs to the support $\Omega$ of the scattering object, and this result has already been demonstrated in  \cite[Theorem 7.6]{kirsch2008factorization} using the Tikhonov regularization as an example. We also mention the similar  idea of \cite{arenslechleiter15} on inverse obstacle scattering. More importantly, we prove that the indicator  represents the following nonlinear information about the unknown contrast $q$,  
\begin{equation*} 
\lim_{\alpha \to 0} \langle  g_{z,\alpha}, \phi_z  \rangle_{L^2(\mathbb{S}^{d-1})} = \langle (\mathcal{P}_\Omega \mathcal{T} \mathcal{P}_\Omega)^{-1} E_z^{\mathcal{P}}  , E_z^{\mathcal{P}}   \rangle_{L^2(\Omega)},
\end{equation*}
where $\phi_z(\hat{x})=  e^{-ik\hat{x} \cdot z}$, $\mathcal{T}$  is a nonlinear operator determined by the contrast $q$ (cf. \eqref{Section operator T_omega def}),  $\mathcal{P}_\Omega$ is a projector operator depending only on $\Omega$ (cf. \eqref{Section operator def of Pomega}), and $E_z^{\mathcal{P}} = -(\Delta + k^2) w_z^{\mathcal{P}}$ where $w_z^{\mathcal{P}} \in H^2(\Omega)$ satisfies the fourth-order boundary value problem (cf. \eqref{Section FM and a LSM main of LSM theorem lemma PomegaEz fourthorder eqn1}--\eqref{Section FM and a LSM main of LSM theorem lemma PomegaEz fourthorder eqn3}).

There is a  connection between the new result and the generalized linear sampling method \cite{audiberthaddar15}; in particular,  we give an alternative proof of this new result and provide alternative characterizations for this nonlinear information.   As will be seen, the generalized linear sampling method need a specific regularization term which is less common than the regularization schemes in Section \ref{section subsection new indicator} and might be more difficult to cope with. However it is worth mentioning the more involved analysis of the generalized linear sampling method may allow to relax some hypothesis needed for the factorization method. Nevertheless, we demonstrate our result under standard assumptions on the unknown contrast.

The paper is further organized as follows. In Section \ref{Section Model}, we introduce the mathematical model for the inverse medium scattering problem and provide the necessary preliminaries for the linear sampling method; in particular we introduce a projection operator to help understand the factorization of the data operator. We investigate an alternative formulation of the linear sampling method in Section \ref{Section new LSM shape} and demonstrate its standard shape characterization for general regularization schemes. In Section \ref{Section parameter LSM} we prove how the imaging indicator quantifies the nonlinear information about the unknown contrast. The expression involves the unique projection $E_z^\mathcal{P}$ and demonstrate how to obtain it using a fourth-order boundary valued problem. With the help of the generalized linear sampling method, in Section \ref{Section GLSM interpretation} we provide alterative proof of the main result and demonstrate alternative representations for the nonlinear information about the unknown.
Finally in Section \ref{Section analysis inside}, we shed light on the imaging indicator in the Born scattering case and illustrate the unique projection $E_z^{\mathcal{P}}$ in the spherical and cylindrical symmetric cases. In the Appendix, we provide standard proofs of well-known results needed for a self-contained paper.

\section{Mathematical model for inverse medium scattering and preliminaries for the linear sampling method} \label{Section Model}
\subsection{Mathematical model}
In this section, we introduce the inverse medium scattering problem in $\mathbb{R}^d$, $d=2,3$.  Let $k>0$ be the  wave number. A plane wave  takes the following form 
\begin{equation*} %
e^{i k x \cdot \hat{\theta}}, \quad \hat{\theta} \in \mathbb{S}^{d-1} :=\{ x \in \mathbb{R}^d: |x|=1\},
\end{equation*}
where $\hat{\theta}$ is the direction of propagation.
Let $\Omega \subset \mathbb{R}^d$ be an open and bounded set with
Lipschitz boundary $\partial \Omega$ such that $\mathbb{R}^d \backslash \overline{\Omega}$  is connected. To best illustrate the main result, we let the real-valued function $q(x) \in L^\infty(\mathbb{R}^d)$ be the contrast of the medium, $q > 0$ on $\Omega$ and $q=0$ on $\mathbb{R}^d \backslash \overline{\Omega}$  such that the support of the contrast is $\Omega$. The contrast $q$ is related to physical quantities such as the electric permittivity and magnetic permeability for polarized electromagnetic scattering in two dimensions (cf. \cite{cakoni2016qualitative}) or refractive index in three dimensions (cf. \cite{colton2012inverse}). Throughout the paper we assume that  $q_{\rm sup} \ge q(y)\ge q_{\rm inf}$, a.e., $y \in \Omega$, for some positive constants $q_{\rm inf/sup}$.  The forward scattering problem is to find total wave filed $e^{i k x \cdot \hat{\theta} } + u^s(x;\hat{\theta};k)$ belonging to $H^1_{loc}(\mathbb{R}^d)$  such that
\begin{eqnarray}
\Delta_x \big( u^s(x;\hat{\theta};k) + e^{ikx\cdot \hat{\theta}} \big) + k^2 \left(1+q(x)\right) \big( u^s(x;\hat{\theta};k) + e^{ikx\cdot \hat{\theta}} \big) =  0 \quad &\mbox{in}& \quad \mathbb{R}^d, \quad  \label{medium us eqn1}\\
\lim_{r:=|x|\to \infty} r^{\frac{d-1}{2}}  \big( \frac{\partial u^s(x;\hat{\theta};k)}{\partial r} -ik u^s(x;\hat{\theta};k)\big) =0,  \label{medium us eqn2}
\end{eqnarray}
{where the last equation, i.e., the Sommerfeld radiation condition, holds  uniformly for all directions}. A solution is called   radiating   if it satisfies this radiation condition. The scattered wave field is $u^s(\cdot;\hat{\theta};k)$ due to the plane wave $e^{i k x \cdot \hat{\theta} }$.   The above scattering problem \eqref{medium us eqn1}--\eqref{medium us eqn2} is a special case of the more general problem where one looks for a radiating solution $u^s \in H^1_{loc}(\mathbb{R}^d)$   to
\begin{equation} \label{medium us eqn1+2}
\Delta u^s + k^2(1+q) u^s = -k^2 q f,
\end{equation}
where $f \in L^\infty(\mathbb{R}^d)$. Setting $f=e^{ikx\cdot \hat{\theta}}$ in \eqref{medium us eqn1+2} gives \eqref{medium us eqn1}--\eqref{medium us eqn2}. This model is referred to as the full model.   

The fundamental solution for the Helmholtz equation is given by
\begin{equation} \label{Green function def}
\Phi(x,y)
:=
\left\{
\begin{array}{cc}
\frac{i}{4} H^{(1)}_0(k|x-y|)  & d=2        \\
\frac{e^{ik |x-y|}}{4\pi|x-y|} & d=3
\end{array}
\right. \qquad x\not=y,
\end{equation}
where $H^{(1)}_0$ denotes the Hankel function of the first kind (\cite{colton2012inverse}). We state the following lemma where we refer to \cite{colton2012inverse,kirsch2008factorization} and \cite{Kirsch17} for a complete proof.
\begin{lemma}
There exists a unique radiating solution to \eqref{medium us eqn1+2}. The solution can be solved with the help of  the Lippmann-Schwinger integral equation 
\begin{eqnarray*}
u^s(x) - k^2 \int_\Omega \Phi(x,y)q(y)  u^s(y)  \ind y= k^2 \int_\Omega \Phi(x,y)q(y)   f(y) \ind y, \quad x \in \mathbb{R}^d.
\end{eqnarray*}
\end{lemma}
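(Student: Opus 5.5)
The plan is the classical Riesz--Fredholm argument of \cite{colton2012inverse,kirsch2008factorization}: reduce \eqref{medium us eqn1+2} to the Lippmann--Schwinger equation, show that this integral equation is of Fredholm type of index zero, and obtain existence from uniqueness.

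\textbf{Step 1: equivalence with the integral equation.} Define the volume potential
\[
(\mathcal{V}\varphi)(x) := k^2 \int_\Omega \Phi(x,y) q(y)\varphi(y) \ind y .
\]
For $\varphi \in L^2(\Omega)$ we have $\mathcal{V}\varphi \in H^2_{loc}(\mathbb{R}^d)$. It is radiating, and it satisfies $(\Delta + k^2)\mathcal{V}\varphi = -k^2 q \varphi$, because $\Phi$ is the radiating fundamental solution. Suppose $u^s$ solves $u^s - \mathcal{V}u^s = \mathcal{V}f$. Then
\[
(\Delta+k^2)u^s = -k^2 q u^s - k^2 q f,
\]
which is exactly \eqref{medium us eqn1+2}, and $u^s$ inherits the radiation condition. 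Conversely, let $u^s$ be a radiating solution of \eqref{medium us eqn1+2}. Then $w := u^s - \mathcal{V}(u^s+f)$ is a radiating solution of $(\Delta+k^2)w = 0$ in all of $\mathbb{R}^d$. Rellich's lemma then forces $w = 0$. Hence the two problems are equivalent. It therefore suffices to solve $(I-\mathcal{V})u = \mathcal{V}f$ in $L^2(\Omega)$ and to extend $u$ to $\mathbb{R}^d$ by the right-hand side formula.

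\textbf{Step 2: Fredholm structure.} The operator $\mathcal{V}: L^2(\Omega) \to H^2(\Omega)$ is bounded. Since the embedding $H^2(\Omega) \hookrightarrow L^2(\Omega)$ is compact, $\mathcal{V}$ is compact on $L^2(\Omega)$. Consequently $I-\mathcal{V}$ is Fredholm of index zero, and the Riesz theory reduces existence to injectivity.

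\textbf{Step 3: uniqueness (the main obstacle).} Suppose $u = \mathcal{V}u$. Extended to $\mathbb{R}^d$, $u$ is a radiating solution of $\Delta u + k^2(1+q)u = 0$. Because $q$ is real, Green's theorem on a large ball gives $\mathrm{Im}\int_{|x|=R} u\,\partial_r\bar u \, ds = 0$. Rellich's lemma then yields $u = 0$ outside a large ball. The delicate part is to propagate $u = 0$ into $\Omega$, which requires the unique continuation principle for $\Delta + k^2(1+q)$ with $q \in L^\infty$. I would invoke the Carleman-estimate based unique continuation as in \cite[Ch.~8]{colton2012inverse}; this is where the connectedness of $\mathbb{R}^d\setminus\overline\Omega$ and the boundedness of $q$ enter. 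This step is standard but technically the heaviest, so I would cite it rather than reprove it.

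With injectivity in hand, the Fredholm alternative gives a unique $u$. Its extension is the unique radiating $H^1_{loc}$ solution, with $u^s \in H^2_{loc}$ by the mapping property of $\mathcal{V}$.
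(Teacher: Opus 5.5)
Your proposal is correct and follows the classical Lippmann--Schwinger plus Riesz--Fredholm argument; the paper gives no proof of its own and defers to exactly this argument in \cite{colton2012inverse,kirsch2008factorization,Kirsch17}. One point in Step~3 needs tidying: for $q\in L^\infty$ the solution is only $H^2_{loc}$, so the unique continuation principle you cite must cover $H^2_{loc}$ solutions (the Carleman-based theorem in \cite{colton2012inverse} is stated for $C^2$ functions), and because it is applied on all of $\mathbb{R}^d$, the connectedness of $\mathbb{R}^d\setminus\overline{\Omega}$ is not actually used there.
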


Note that (c.f. \cite{cakoni2016qualitative})
\begin{equation*} 
u^{s}(x;\hat{\theta};k)
=
\left\{
\begin{array}{cc}
\frac{e^{i\frac{\pi}{4}}}{\sqrt{8k\pi}} \frac{e^{ikr}}{\sqrt{r}}\left\{u^{\infty}(\hat{x};\hat{\theta};k)+\mathcal{O}\left(\frac{1}{r}\right)\right\}  & d=2        \\
 \frac{e^{ik r}}{4\pi r} \left\{ u^\infty (\hat{x};\hat{\theta};k) +     \mathcal{O}\left(\frac{1}{r}\right) \right\}& d=3
\end{array}
\right.
 \quad\mbox{as }\,r=|x|\rightarrow\infty,
\end{equation*}
uniformly with respect to all directions $\hat{x}:=x/|x|\in\mathbb{S}^{d-1}$, we arrive at $u^{\infty}(\hat{x};\hat{\theta};k)$ which is known as the  far-field pattern with $\hat{x}\in\mathbb{S}^{d-1}$ denoting the observation direction. Here $\mathbb{S}^{d-1} = \{x \in \mathbb{R}^d: |x|=1\}$ denotes the unit circle in two dimensions or the unit sphere in three dimensions. The multi-static data at a fixed frequency are given by
\begin{equation} \label{Section model far-field data}
\{u^{\infty}(\hat{x};\hat{\theta};k): \hat{x}\in\mathbb{S}^{d-1}, \hat{\theta}\in\mathbb{S}^{d-1}\}.
\end{equation}

In this work, the inverse scattering problem is to retrieve information about the contrast $q$ from the multi-static data \eqref{Section model far-field data}. We are interested in the  linear sampling method and aim to characterize the nonlinear information in addition to the standard shape characterization.
\subsection{Preliminaries for the linear sampling method}\label{Section operator}
To begin with the analysis, we introduce the necessary preliminaries for the qualitative methods in order to analyze the inverse scattering problem.  Let the far-field operator $\mathcal{F}: L^2(\mathbb{S}^{d-1}) \to L^2(\mathbb{S}^{d-1})$ be given by
\begin{equation} \label{Section operator N def}
\left( \mathcal{F} g \right)(\hat{x}) := \int_{\mathbb{S}^{d-1}}  u^{\infty}(\hat{x};\hat{\theta};k) g(\hat{\theta}) \ind s(\hat{\theta}),
\end{equation}
where the kernel is given by the data \eqref{Section model far-field data}. 

It is known that the far-field operator has a factorization as follows. Introduce $\mathcal{H}: L^2(\mathbb{S}^{d-1}) \to L^2(\Omega)$ by 
\begin{equation} \label{Section operator S_omega def}
\left( \mathcal{H} g \right)(y) := \int_{\mathbb{S}^{d-1}} e^{i k y \cdot \hat{\theta}} g(\hat{\theta}) \ind s(\hat{\theta}), \quad \forall g \in L^2(\mathbb{S}^{d-1}), \quad y \in \Omega,
\end{equation}
and it follows directly that its adjoint $\mathcal{H}^*: L^2(\Omega) \to L^2(\mathbb{S}^{d-1})$ is given by
\begin{equation} \label{Section operator S*_omega def}
\left( \mathcal{H}^* h \right)(\hat{x}) := \int_{\Omega} e^{-i k \hat{x} \cdot y} h(y) \ind y, \quad \forall h \in L^2(\Omega), \quad \hat{x} \in \mathbb{S}^{d-1}
\end{equation}
which is dictated by $\langle \mathcal{H}^* h,g \rangle_{L^2(\mathbb{S}^{d-1})} =  \langle  h, \mathcal{H} g \rangle_{L^2(\Omega)}$. Here $\langle , \rangle_{L^2(\Omega)}$ represents the $L^2(\Omega)$ inner product with conjugation in the second argument, and we further denote by $\|\cdot\|_{L^2(D)}$ the corresponding $L^2(D)$ norm. From now on we drop the subscript $L^2(\mathbb{S}^{d-1})$ when the inner product is in $L^2(\mathbb{S}^{d-1})$ and will explicitly indicate a subscript for other cases. Another operator $\mathcal{T}$ is needed for the factorization, namely $\mathcal{T}: L^2(\Omega) \to L^2(\Omega)$ which is given by 
\begin{equation} \label{Section operator T_omega def}
 \mathcal{T} f   := k^2q f+ k^2 q v|_\Omega, \quad \forall f \in L^2(\Omega),
\end{equation} 
where $v\in H^1_{loc}(\mathbb{R}^d)$ is the unique radiating solution to 
\begin{equation} \label{Section operator T_omega def v def}
\Delta v + k^2(1+q) v = -k^2 q f.
\end{equation}

The following Theorem gives the factorization of the far-field operator. The proof is standard and we include one in  Appendix \ref{section appendix factorization} for completeness.
\begin{lemma} \label{Section operator Omega fac theorem}
Let the data operator $\mathcal{F}: L^2(\mathbb{S}^{d-1}) \to L^2(\mathbb{S}^{d-1})$ be given by \eqref{Section operator N def}. Then it holds that
\begin{equation*}
\mathcal{F} = \mathcal{H}^* \mathcal{T} \mathcal{H}
\end{equation*}
where $\mathcal{H}$, $\mathcal{H}^*$, and $\mathcal{T}$ are given by \eqref{Section operator S_omega def}, \eqref{Section operator S*_omega def}, and \eqref{Section operator T_omega def}, respectively. 
\end{lemma}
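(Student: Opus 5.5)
The plan is to compute $\mathcal{F}g$ directly by superposition and then read off the three factors from the Lippmann--Schwinger representation of the far-field pattern. Fix $g\in L^2(\mathbb{S}^{d-1})$ and let $f:=\mathcal{H}g$. By \eqref{Section operator S_omega def} this is the Herglotz wave function $f(y)=\int_{\mathbb{S}^{d-1}} e^{iky\cdot\hat\theta}g(\hat\theta)\ind s(\hat\theta)$, which is smooth and bounded on $\mathbb{R}^d$. We may therefore use it as the source in \eqref{medium us eqn1+2}.

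\textbf{Superposition.} Let $v$ be the radiating solution of $\Delta v+k^2(1+q)v=-k^2qf$, which is exactly \eqref{Section operator T_omega def v def}. The problem is linear and uniquely solvable, so $v=\int_{\mathbb{S}^{d-1}}u^s(\cdot;\hat\theta;k)g(\hat\theta)\ind s(\hat\theta)$. To justify this, we need the map $\hat\theta\mapsto u^s(\cdot;\hat\theta;k)$ to be continuous into $L^2(\Omega)$, or alternatively into $H^1_{loc}$. This follows from the bounded invertibility of the Lippmann--Schwinger operator in the lemma above, since the right-hand side depends continuously on $\hat\theta$. Integrating the equation against $g$ then shows that the superposition is a radiating solution with source $-k^2qf$, and uniqueness identifies it with $v$. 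Passing to far fields, which is linear and continuous, gives $v^\infty(\hat x)=\int_{\mathbb{S}^{d-1}}u^\infty(\hat x;\hat\theta;k)g(\hat\theta)\ind s(\hat\theta)=(\mathcal{F}g)(\hat x)$.

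\textbf{Far field of $v$.} By the Lippmann--Schwinger equation, $v(x)=k^2\int_\Omega\Phi(x,y)q(y)\bigl(v(y)+f(y)\bigr)\ind y=\int_\Omega\Phi(x,y)(\mathcal{T}f)(y)\ind y$, using definition \eqref{Section operator T_omega def}. The asymptotics of $\Phi$ are
\[
\Phi(x,y)=\gamma_d\frac{e^{ik|x|}}{|x|^{(d-1)/2}}\Bigl(e^{-ik\hat x\cdot y}+\mathcal{O}(|x|^{-1})\Bigr),
\]
uniformly for $y$ in the bounded set $\Omega$. Here $\gamma_d$ is exactly the constant used in the paper's normalization of $u^\infty$, namely $e^{i\pi/4}/\sqrt{8k\pi}$ for $d=2$ and $1/(4\pi)$ for $d=3$. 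With this we obtain $v^\infty(\hat x)=\int_\Omega e^{-ik\hat x\cdot y}(\mathcal{T}f)(y)\ind y=(\mathcal{H}^*\mathcal{T}f)(\hat x)$ by \eqref{Section operator S*_omega def}. Combining the two steps gives $\mathcal{F}g=\mathcal{H}^*\mathcal{T}\mathcal{H}g$ for all $g$.

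\textbf{Main point of care.} The delicate step is the interchange of the $\hat\theta$-integration with solving the PDE and taking the far-field limit. I would handle it at the level of the integral equation on $\Omega$. The solution operator $(I-k^2\Phi q)^{-1}$ is bounded on $L^2(\Omega)$ and commutes with Bochner integration in $\hat\theta$. The far field is then a bounded functional of the density $q(v+f)\in L^2(\Omega)$, so both interchanges are justified. Consistency of the normalization constants is routine, because $u^\infty$ is defined with the same constants as the asymptotics of $\Phi$.
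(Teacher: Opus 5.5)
Your proposal is correct and follows essentially the same route as the paper's proof in Appendix A. The paper writes $\mathcal{F}=\mathcal{G}\mathcal{H}$, with $\mathcal{G}$ the source-to-far-field map, and then identifies $\mathcal{G}f=\mathcal{H}^*\mathcal{T}f$ through the volume-potential representation $v=\int_\Omega\Phi(\cdot,y)(\mathcal{T}f)(y)\ind y$, exactly as you do; your justification of the superposition and far-field interchange via the bounded inverse of the Lippmann--Schwinger operator fills in a step the paper only asserts as ``follows directly.''
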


It is well-known that the Herglotz wave functions are of the form
$$
\int_{\mathbb{S}^{d-1}} e^{i k x \cdot \hat{\theta}} g(\hat{\theta}) \ind s(\hat{\theta}), \quad \forall x\in\mathbb{R}^d,
$$
and every function in $R(\mathcal{H})$ is the restriction of Herglotz wave functions in $\Omega$; here $R(\mathcal{H})$ denotes the range of $\mathcal{H}$.
For later purposes, let $Y_\Omega$ be the closure of the range of $\mathcal{H}$, i.e., $Y_\Omega:=\overline{ R(\mathcal{H})}$. Note that $R(\mathcal{H})$ is dense in $\{v \in L^2(\Omega): \Delta v + k^2 v =0 \mbox{ in } \Omega \mbox{ variationally}\}$(c.f.  \cite[Lemma 6.45]{cakoni2016qualitative} and \cite[Theorem 7.3]{kirsch2008factorization}), then $Y_\Omega=\{v \in L^2(\Omega): \Delta v + k^2 v =0 \mbox{ in } \Omega \mbox{ variationally}\}$.  Define the projection operator
\begin{equation}\label{Section operator def of Pomega}
\mathcal{P}_\Omega: L^2(\Omega) \to L^2(\Omega)  \mbox{ where } \mathcal{P}_\Omega w = w|_{Y_\Omega},\quad \forall w \in L^2(\Omega).   
\end{equation}
Then it follows that $\mathcal{H}^* (\mathcal{P}_\Omega-\mathcal{I}) = 0$ (where $\mathcal{I}$ is the identity operator) since $Y_\Omega$ coincides with the orthogonal complement of the null space of $\mathcal{H}^*$. As a consequence, we have that
\begin{equation}  \label{Section operator Omega fac coercive factorization}
\mathcal{F} = \mathcal{H}^* \mathcal{P}_\Omega \mathcal{T} \mathcal{P}_\Omega \mathcal{H}.
\end{equation}
This equivalent formulation is motivated by that the middle operator $\mathcal{P}_\Omega \mathcal{T} \mathcal{P}_\Omega$ is  coercive provided $k$ is not  an interior transmission eigenvalue. To be more precise, we first introduce the definition of the interior transmission eigenvalue following \cite{cakoni2016inverse} and we refer to \cite{cakoni2016inverse,Cakoni2010,Kirsch21}  for a more comprehensive discussion.

\begin{definition}\label{Section operator ITE def}
$k \in \mathbb{C}$ is called an {\it interior transmission eigenvalue} if there exists a non-trivial pair $(w,v) \in L^2(\Omega) \times L^2(\Omega)$ such that $w-v \in H_0^2(\Omega)$ and
\begin{eqnarray*}
\Delta w + k^2(1+q) w =0 &\mbox{ in }& \Omega,\\
\Delta v + k^2v =0 &\mbox{ in }& \Omega,\\
w = v &\mbox{ on }& \partial \Omega,\\
\frac{\partial w}{\partial \nu} = \frac{\partial v}{\partial \nu} &\mbox{ on }& \partial \Omega.
\end{eqnarray*}
Here $H_0^2(\Omega):=\{ u\in H^2(\Omega): u=0 \mbox{ on } \partial \Omega \mbox{ and } \frac{\partial u}{\partial \nu} \mbox{ on } \partial \Omega\}$.
\end{definition}
Now we are ready to study the middle operator $\mathcal{P}_\Omega \mathcal{T} \mathcal{P}_\Omega$ in Proposition \ref{Section operator middle positive definite prop}, whose proof is standard and we include one in Appendix \ref{Section appendix proof of middle operator} for completeness.
\begin{proposition}\label{Section operator middle positive definite prop}
Assume that $q_{\rm sup} \ge q(y)\ge q_{\rm inf}$, a.e., $y \in \Omega$, for some positive constants $q_{\rm inf/sup}$.  Then we have the following properties.
\begin{enumerate}
\item $  \mathcal{T}  = \mathcal{T}_{b} + \mathcal{C}$ where $\mathcal{T}_{b}: L^2(\Omega) \to L^2(\Omega)$ is given by 
\begin{equation} \label{Section operator T_omega def Born}
\mathcal{T}_{b} f   := k^2q f, \quad \forall f \in L^2(\Omega),
\end{equation}  
and $\mathcal{C}: L^2(\Omega) \to L^2(\Omega)$ is given by
$
\mathcal{C}  :=  \mathcal{T}-\mathcal{T}_{b}$. 
Here $\mathcal{T}_{b}$ is coercive and $\mathcal{C} $ is compact.  
\item $\Im \mathcal{T}$ is non-negative, i.e.,
$$\Im \big( \langle \mathcal{T} f,f\rangle_{L^2(\Omega)}\big) \ge 0, \quad \forall f \in L^2(\Omega).
$$
 \item Moreover assume that $k$ is not an interior transmission eigenvalue, then
 $\Im \mathcal{T}$ is positive on $Y_\Omega$, i.e.,
$$\Im \big( \langle \mathcal{T} f,f\rangle_{L^2(\Omega)}\big) > 0, \quad \forall f \in Y_\Omega.
$$
\item 
Assume that $k$ is not an interior transmission eigenvalue, then $\mathcal{P}_\Omega \mathcal{T} \mathcal{P}_\Omega: Y_\Omega \to Y_\Omega$ is coercive and there exists a positive constant $T_{\inf} >0$ such that
\begin{equation} \label{Section operator middle positive definite prop eqn}
|\langle \mathcal{P}_\Omega \mathcal{T} \mathcal{P}_\Omega f,f\rangle_{L^2(\Omega)}| \ge T_{\inf} \| f\|_{L^2(\Omega)}^2, \quad \forall 0\not=f \in Y_\Omega.
\end{equation}
\end{enumerate}

 \end{proposition}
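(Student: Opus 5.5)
We prove the four items in order, following the standard factorization-method argument (cf. \cite{kirsch2008factorization,Kirsch17}) adapted to the projected middle operator.

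\emph{Item 1.} Since $q_{\rm inf}\le q\le q_{\rm sup}$ on $\Omega$, we have $\langle \mathcal{T}_b f,f\rangle_{L^2(\Omega)} = k^2\int_\Omega q|f|^2 \ge k^2 q_{\rm inf}\|f\|^2_{L^2(\Omega)}$, so $\mathcal{T}_b$ is coercive. For the remainder, $\mathcal{C}f = k^2 q\, v|_\Omega$, where $v$ is the radiating solution of \eqref{Section operator T_omega def v def}. The map $f\mapsto v|_\Omega$ is bounded from $L^2(\Omega)$ into $H^1(\Omega)$: this follows from the Lippmann--Schwinger well-posedness lemma together with the mapping properties of the volume potential. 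The embedding $H^1(\Omega)\hookrightarrow L^2(\Omega)$ is compact by Rellich, and multiplication by $q\in L^\infty$ is bounded. Hence $\mathcal{C}$ is compact.

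\emph{Item 2.} Set $u = v+f$ on $\Omega$, so that $\mathcal{T}f = k^2 q(f+v)$. From the equation, $k^2 q(f+v) = -(\Delta v + k^2 v)$. We write
\[
\langle \mathcal{T}f,f\rangle = \langle k^2q(f+v), f+v\rangle - \langle k^2 q(f+v), v\rangle .
\]
The first term is real. For the second, $\langle k^2 q(f+v),v\rangle = -\int_\Omega(\Delta v+k^2 v)\overline v$. Integrating by parts over a large ball $B_R$, where $\Delta v + k^2 v=0$ outside $\Omega$, gives
\[
\Im\langle \mathcal{T} f,f\rangle = \Im\int_{\partial B_R}\partial_r v\,\overline v\, ds .
\]
By the radiation condition this tends to $k\lim_R\int_{\partial B_R}|v|^2 = c_d\|v^\infty\|^2_{L^2(\mathbb{S}^{d-1})}\ge 0$. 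The sign needs to be checked carefully against the conjugation convention; the expected outcome is $\Im\langle\mathcal{T}f,f\rangle = c\,\|v^\infty\|^2$ with $c>0$.

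\emph{Item 3.} Let $f\in Y_\Omega$ with $\Im\langle\mathcal{T}f,f\rangle=0$. Then $v^\infty=0$, so by Rellich's lemma and unique continuation $v=0$ outside $\Omega$. Consequently $v\in H^2_0(\Omega)$ (using $H^2_{loc}$ regularity). Put $w=v+f$. Then $\Delta w+k^2(1+q)w=0$, since $\Delta f+k^2 f=0$ for $f\in Y_\Omega$, and $w-f=v\in H^2_0(\Omega)$. Thus $(w,f)$ is an interior transmission eigenpair, and because $k$ is not an eigenvalue, $f=0$.

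\emph{Item 4.} This is the main obstacle. We argue by contradiction. Suppose $f_n\in Y_\Omega$ with $\|f_n\|=1$ and $\langle \mathcal{T}f_n,f_n\rangle\to0$. Since $\mathcal{P}_\Omega f_n = f_n$, we have $\langle \mathcal{P}_\Omega\mathcal{T}\mathcal{P}_\Omega f_n,f_n\rangle=\langle\mathcal{T}f_n,f_n\rangle$. Extract a weakly convergent subsequence $f_n\rightharpoonup f\in Y_\Omega$; this is possible because $Y_\Omega$ is closed and convex, hence weakly closed. By compactness, $\langle \mathcal{C}f_n,f_n\rangle\to\langle\mathcal{C}f,f\rangle$. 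Since $\Im\mathcal{T}=\Im\mathcal{C}$, this gives $\Im\langle\mathcal{T}f,f\rangle=0$, so $f=0$ by Item 3. Then $\langle\mathcal{C}f_n,f_n\rangle\to0$, and therefore $\langle\mathcal{T}_bf_n,f_n\rangle\to0$, which contradicts $\langle\mathcal{T}_bf_n,f_n\rangle\ge k^2q_{\rm inf}$. The delicate points are the weak closedness of $Y_\Omega$ and the passage to the limit for the compact part; both are handled as above, yielding the constant $T_{\inf}$.
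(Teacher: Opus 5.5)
Your proposal is correct and follows the paper's proof for items 1--3: the same Green's identity and radiation-condition computation giving $\Im\langle\mathcal{T}f,f\rangle = k\lim_{R\to\infty}\int_{|x|=R}|v|^2\,ds\ge 0$, and the same reduction via Rellich's lemma to the interior transmission problem for the pair $(v+f,f)$; the sign in your item 2 is already correct as you wrote it. For item 4 the paper simply invokes the abstract coercivity lemma in its appendix (a special case of Lemma 1.17 of Kirsch--Grinberg), and your weak-compactness contradiction argument is exactly the proof of that lemma, so the two routes coincide.
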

\begin{remark}
Proposition \ref{Section operator middle positive definite prop} holds also when $q_{\rm inf/sup}$ are some  negative constants by directly applying the same proof with $-\mathcal{T}_{0}$ being coercive. It is also possible to consider the case when $q$ changes sign strictly inside $\Omega$ (or equivalently when $q$ has a constant sign in a neighborhood of $\partial \Omega$), cf. \cite{Audibertsign,ArminLak}.
\end{remark}

It is well-known that the far-field operator is normal (see \cite[Remark 7.33]{cakoni2016qualitative} for $d=2$ and \cite[Theorem 4.4]{kirsch2008factorization} for $d=3$; see also Appendix \ref{Section appendix F is normal} for a self-contained paper) for real-valued contrast, then
 we can introduce the eigensystem $\{ \zeta_n,\mu_n\}_{n=0}^\infty$ of the normal operator $\mathcal{F}$ by
\begin{equation} \label{Section operator Omega eigensystem}
\mathcal{F} \zeta_n = \mu_n \zeta_n, \quad n=0,1,\cdots,
\end{equation}
here $\zeta_n \in L^2(\mathbb{S}^{d-1})$ and $\mu_n \in \mathbb{C}$. With such an eigensystem, one can define $|\mathcal{F}|^{1/2}: L^2(\mathbb{S}^{d-1}) \to L^2(\mathbb{S}^{d-1})$ by
\begin{equation*}
|\mathcal{F}|^{1/2} \zeta_n = |\mu_n|^{1/2} \zeta_n, \quad n=0,1,\cdots.
\end{equation*}
In addition, we have that the phases of $\frac{\mu_n}{|\mu_n|}$ are always in an interval of length strictly less than $\pi$ if $k$ is not an interior transmission eigenvalue. In particular the following is from \cite[pp. 190]{cakoni2016qualitative} and for a self-contained paper we include a brief proof in Appendix \ref{Section appendix proof of accumulation interval}.
 \begin{lemma}  \label{Section operator mu/|mu| phase interval}
Assume that $k$ is not an interior transmission eigenvalue, then $\Im \mu_n > 0$  and the accumulation point of $\frac{\mu_n}{|\mu_n|}$ cannot be $-1$ whereby $\frac{\mu_n}{|\mu_n|} = e^{i \eta_n}$ with $\eta_n \in [0,\pi-2\eta_\delta)$ with some $\eta_\delta \in (0,\pi/2]$. 
 \end{lemma}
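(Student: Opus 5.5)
The plan is to transfer the sign information on $\Im\mathcal{T}$ from Proposition \ref{Section operator middle positive definite prop} to the eigenvalues $\mu_n$ through the factorization, and then to use the decomposition $\mathcal{T}=\mathcal{T}_b+\mathcal{C}$ together with $\mu_n\to 0$ to locate the accumulation points of $\mu_n/|\mu_n|$.

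\textbf{Step 1: $\Im\mu_n>0$.} Since $\mathcal{F}$ is normal with eigenpairs $(\zeta_n,\mu_n)$ and $\|\zeta_n\|=1$, Lemma \ref{Section operator Omega fac theorem} gives
\begin{equation*}
\mu_n=\langle \mathcal{F}\zeta_n,\zeta_n\rangle=\langle \mathcal{T}\mathcal{H}\zeta_n,\mathcal{H}\zeta_n\rangle_{L^2(\Omega)}.
\end{equation*}
Here $\mathcal{H}\zeta_n\in Y_\Omega$. It is also nonzero: if $\mathcal{H}\zeta_n=0$ then $\mathcal{F}\zeta_n=0$, which contradicts injectivity of $\mathcal{F}$. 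I would justify injectivity by citing it as standard when $k$ is not an interior transmission eigenvalue, or simply restrict attention to the nonzero eigenvalues. Item 3 of Proposition \ref{Section operator middle positive definite prop} then yields $\Im\mu_n>0$.

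\textbf{Step 2: $-1$ is not an accumulation point.} Because $\mathcal{F}$ is compact, $\mu_n\to0$. Set $f_n=\mathcal{H}\zeta_n/\|\mathcal{H}\zeta_n\|_{L^2(\Omega)}$. Then
\begin{equation*}
\frac{\mu_n}{|\mu_n|}=\frac{\langle \mathcal{T}f_n,f_n\rangle_{L^2(\Omega)}}{|\langle \mathcal{T}f_n,f_n\rangle_{L^2(\Omega)}|}.
\end{equation*}
The key claim is $f_n\rightharpoonup0$ weakly in $L^2(\Omega)$. For any $h\in L^2(\Omega)$ we have $\langle f_n,h\rangle=\langle \zeta_n,\mathcal{H}^*h\rangle/\|\mathcal{H}\zeta_n\|$. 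Since $\{\zeta_n\}$ is orthonormal, the numerator tends to zero. The difficulty is that the denominator $\|\mathcal{H}\zeta_n\|$ could also tend to zero.

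To handle this, I would use item 4 to get $|\mu_n|\ge T_{\inf}\|\mathcal{H}\zeta_n\|^2$, which gives $\|\mathcal{H}\zeta_n\|\to0$, but only as an upper bound. The better route is to work with $h\in R(\mathcal{P}_\Omega\mathcal{T}\mathcal{P}_\Omega)$, which is dense in $Y_\Omega$ by coercivity. Writing $h=\mathcal{P}_\Omega\mathcal{T}\mathcal{P}_\Omega u$ and noting that $f_n\in Y_\Omega$, we get
\begin{equation*}
\langle f_n,h\rangle_{L^2(\Omega)}=\langle \mathcal{P}_\Omega\mathcal{T}^*\mathcal{P}_\Omega f_n,u\rangle_{L^2(\Omega)}.
\end{equation*}
Normality gives $\mathcal{F}^*\zeta_n=\overline{\mu_n}\zeta_n$, so $\mathcal{H}^*\mathcal{T}^*\mathcal{H}\zeta_n=\overline{\mu_n}\zeta_n$. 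Using the range density of $\mathcal{H}$, I would try to conclude that this quantity is controlled by $|\mu_n|/\|\mathcal{H}\zeta_n\|\le |\mu_n|^{1/2}T_{\inf}^{-1/2}\cdot$(bounded), which tends to zero. \emph{This weak-convergence step is the main obstacle.} If it fails, the fallback is the standard range argument of \cite{cakoni2016qualitative}. Once weak convergence is in hand, uniform boundedness of $\{f_n\}$ extends it from the dense set to all $h$.

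\textbf{Step 3: conclusion.} With $f_n\rightharpoonup0$, compactness of $\mathcal{C}$ gives $\langle \mathcal{C}f_n,f_n\rangle\to0$. On the other hand $\langle\mathcal{T}_bf_n,f_n\rangle=k^2\int q|f_n|^2\ge k^2q_{\inf}>0$. Hence every accumulation point of $\mu_n/|\mu_n|$ lies in the closed set $\{e^{i\eta}:\eta\in[0,\pi/2]\}$, and the accumulation points $\pm1$ both reduce to $+1$. Since $\Im\mu_n>0$, every $\eta_n$ lies in $(0,\pi)$. Only finitely many $\eta_n$ can lie near $\pi$, and each of those is strictly less than $\pi$. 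Therefore $\sup_n\eta_n<\pi$, and we set $\eta_\delta:=(\pi-\sup_n\eta_n)/2\in(0,\pi/2]$.
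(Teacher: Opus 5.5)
Your proof is correct, but the key step is organized differently from the paper's.

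\textbf{How the paper argues.} The paper normalizes by $\varphi_n=\mathcal{H}\zeta_n/\sqrt{|\mu_n|}$. Then $\langle\mathcal{T}\varphi_n,\varphi_n\rangle_{L^2(\Omega)}=\mu_n/|\mu_n|$ and $\|\varphi_n\|^2_{L^2(\Omega)}\le 1/T_{\inf}$. It never shows that the whole sequence tends weakly to zero. Instead it takes a weakly convergent subsequence along which $\mu_n/|\mu_n|\to-1$. Along it, $\Im\langle\mathcal{C}\varphi_{n_m},\varphi_{n_m}\rangle_{L^2(\Omega)}=\Im(\mu_{n_m}/|\mu_{n_m}|)\to0$. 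So the weak limit $\varphi\in Y_\Omega$ satisfies $\Im\langle\mathcal{T}\varphi,\varphi\rangle_{L^2(\Omega)}=0$, and item 3 of Proposition~\ref{Section operator middle positive definite prop} gives $\varphi=0$. Then $\langle\mathcal{T}_b\varphi_{n_m},\varphi_{n_m}\rangle_{L^2(\Omega)}\to-1$, contradicting $\mathcal{T}_b\ge0$. The step you flagged as the main obstacle is thus sidestepped: the contradiction hypothesis together with the interior-transmission assumption supplies the vanishing weak limit for free.

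\textbf{What your route buys.} You prove $f_n\rightharpoonup0$ for the full sequence. This costs normality and a density argument, but it yields the sharper fact that $+1$ is the only accumulation point of $\mu_n/|\mu_n|$.

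\textbf{Your hedged step closes.} It works along the lines you sketched. For $h=\mathcal{P}_\Omega\mathcal{T}\mathcal{P}_\Omega\mathcal{H}g$ one gets $\langle\mathcal{H}\zeta_n,h\rangle_{L^2(\Omega)}=\langle\zeta_n,\mathcal{F}g\rangle=\overline{\mu_n}\langle\zeta_n,g\rangle$. Use the bound $|\mu_n|\le\|\mathcal{T}\|\,\|\mathcal{H}\zeta_n\|^2_{L^2(\Omega)}$; this, rather than $T_{\inf}$, is what controls $|\mu_n|/\|\mathcal{H}\zeta_n\|_{L^2(\Omega)}$ from above. It gives $|\langle f_n,h\rangle_{L^2(\Omega)}|\le\|\mathcal{T}\|\,\|\mathcal{H}\zeta_n\|_{L^2(\Omega)}\,|\langle\zeta_n,g\rangle|\to0$. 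Such $h$ are dense in $Y_\Omega$, because coercivity makes $\mathcal{P}_\Omega\mathcal{T}\mathcal{P}_\Omega$ a bounded bijection of $Y_\Omega$ and $R(\mathcal{H})$ is dense there. Finally, $f_n\perp Y_\Omega^\perp$ trivially.

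\textbf{Cosmetic point.} With $\eta_\delta=(\pi-\sup_n\eta_n)/2$ you only get $\eta_n\le\pi-2\eta_\delta$, with equality at the maximizing index. The statement asks for a strict inequality, so take $\eta_\delta=(\pi-\sup_n\eta_n)/4$ instead.
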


We now state the following lemma on range identity.

\begin{lemma}\label{Section operator Omega range characterization prop}
Assume that $k$ is not an interior transmission eigenvalue. Then it follows that
$Range(\mathcal{H}^*) = Range(|\mathcal{F}|^{1/2})$.
\end{lemma}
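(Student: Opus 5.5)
This is the classical range identity of the factorization method (Kirsch's $F_\#$ or $(F^*F)^{1/4}$ theorem, normal-operator version). We apply it to the coercive factorization \eqref{Section operator Omega fac coercive factorization}. Write $\mathcal{G}:=\mathcal{P}_\Omega\mathcal{H}: L^2(\mathbb{S}^{d-1})\to Y_\Omega$ and $\mathcal{M}:=\mathcal{P}_\Omega\mathcal{T}\mathcal{P}_\Omega|_{Y_\Omega}$, so that $\mathcal{F}=\mathcal{G}^*\mathcal{M}\mathcal{G}$. Since $\mathcal{H}^*(\mathcal{P}_\Omega-\mathcal{I})=0$, we have $R(\mathcal{H}^*)=R(\mathcal{G}^*)$, and it suffices to prove $R(\mathcal{G}^*)=R(|\mathcal{F}|^{1/2})$. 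Note that $\mathcal{G}$ has dense range in $Y_\Omega$ by the definition $Y_\Omega=\overline{R(\mathcal{H})}$, so $\mathcal{G}^*$ is injective on $Y_\Omega$.

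\textbf{Step 1: factorize $|\mathcal{F}|^{1/2}$.} Using the eigensystem \eqref{Section operator Omega eigensystem}, define the unitary-on-range operator $\mathcal{S}$ by $\mathcal{S}\zeta_n=e^{i\eta_n}\zeta_n$, so that $\mathcal{F}=|\mathcal{F}|^{1/2}\mathcal{S}|\mathcal{F}|^{1/2}$. This gives two factorizations,
\[
\mathcal{F}=\mathcal{G}^*\mathcal{M}\mathcal{G}=|\mathcal{F}|^{1/2}\mathcal{S}|\mathcal{F}|^{1/2}.
\]
The general range lemma (Kirsch, Theorem 1.21 in \cite{kirsch2008factorization}) then yields $R(\mathcal{G}^*)=R(|\mathcal{F}|^{1/2})$, provided both middle operators are coercive in the sense $|\langle \mathcal{M}f,f\rangle|\ge c\|f\|^2$ and $|\langle \mathcal{S}\psi,\psi\rangle|\ge c\|\psi\|^2$ on the relevant closed ranges.

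\textbf{Step 2: verify coercivity.} For $\mathcal{M}$ this is exactly \eqref{Section operator middle positive definite prop eqn} of Proposition \ref{Section operator middle positive definite prop}, which uses that $k$ is not an interior transmission eigenvalue. For $\mathcal{S}$, Lemma \ref{Section operator mu/|mu| phase interval} places all $e^{i\eta_n}$ in the arc $\eta_n\in[0,\pi-2\eta_\delta)$. Rotating by $e^{-i(\pi/2-\eta_\delta)}$ maps this arc into a sector $\{|\arg|<\pi/2-\eta_\delta\}$. Hence for $\psi=\sum c_n\zeta_n$,
\[
\Re\bigl(e^{-i(\pi/2-\eta_\delta)}\langle \mathcal{S}\psi,\psi\rangle\bigr)=\sum|c_n|^2\cos\bigl(\eta_n-\pi/2+\eta_\delta\bigr)\ge \sin(\eta_\delta)\|\psi\|^2,
\]
which is the required coercivity of $\mathcal{S}$.

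\textbf{Step 3: the range lemma itself.} If I do not simply cite it, I would prove it via the characterization $\phi\in R(\mathcal{G}^*)$ if and only if $\inf\{|\langle \mathcal{F}g,g\rangle| : \langle g,\phi\rangle=1\}>0$. For one direction, coercivity gives $|\langle \mathcal{F}g,g\rangle|\ge c\|\mathcal{G}g\|^2$, and if $\phi=\mathcal{G}^*h$ then $1=\langle \mathcal{G}g,h\rangle\le\|\mathcal{G}g\|\|h\|$. The converse uses boundedness of $\mathcal{M}$, the bound $|\langle \mathcal{F}g,g\rangle|\le C\|\mathcal{G}g\|^2$, and a Hahn--Banach/closed-range argument. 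Since the same characterization holds for the factor $|\mathcal{F}|^{1/2}$ with middle operator $\mathcal{S}$, the two ranges coincide.

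\textbf{Main obstacle.} The delicate point is that coercivity of $\mathcal{M}$ holds only on $Y_\Omega$, not on all of $L^2(\Omega)$. This is exactly why the projected factorization \eqref{Section operator Omega fac coercive factorization} is used, and why $\mathcal{G}$ must be viewed as mapping into $Y_\Omega$, where it has dense range. With that bookkeeping in place, the rest is standard.
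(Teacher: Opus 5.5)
Your proposal is correct and follows the same route as the paper. The paper's proof is a one-line appeal to the abstract range identity of Kirsch--Grinberg (their Corollary 1.22) for the projected factorization $\mathcal{F}=\mathcal{H}^*\mathcal{P}_\Omega\mathcal{T}\mathcal{P}_\Omega\mathcal{H}$, with the coercivity taken from Proposition~\ref{Section operator middle positive definite prop}. You unpack that citation by comparing with the second factorization $\mathcal{F}=|\mathcal{F}|^{1/2}\mathcal{S}|\mathcal{F}|^{1/2}$, which is coercive by Lemma~\ref{Section operator mu/|mu| phase interval}, through the $\inf$-characterization of the range; this is how the cited result is proved.
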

\begin{proof}
Since $k$ is not an interior transmission eigenvalue, then the middle operator $\mathcal{P}_\Omega\mathcal{T} \mathcal{P}_\Omega$ is coercive, then the proof follows from \cite[Corollary 1.22]{kirsch2008factorization} and Proposition \ref{Section operator middle positive definite prop}.
\end{proof}

\section{Imaging indicator for the alternative linear sampling method} \label{Section new LSM shape}
In this section we study an alternative formulation of the linear sampling method (i.e., alternative linear sampling method). To begin with, let $\phi_z \in L^2(\mathbb{S}^{d-1})$ be given by 
\begin{equation*} \label{Section FM and a LSM phi_z def}
\phi_z(\hat{x}) :=  e^{-ik\hat{x} \cdot z},\quad\hat{x}\in \mathbb{S}^{d-1}
\end{equation*}

We first give the following well-known result from \cite[Theorem 4.6]{kirsch2008factorization} and we include a proof in Appendix \ref{Section appendix FM and a LSM lemma phiz S Omega} for a self-contained paper.
\begin{lemma} \label{Section FM and a LSM lemma phiz S Omega}
It holds that $z\in \Omega $ if, and only if, $\phi_z \in \mbox{Range} (\mathcal{H}^*)$. Moreover let $B(z,\epsilon):=\{x\in \mathbb{R}^d: |x-z| < \epsilon\}$ for some fixed small $\epsilon>0$. If $\overline{B(z,\epsilon)} \subset \Omega$, then it holds that
$$
\phi_z = \mathcal{H}^* E_z,
$$
where $E_z \in L^2(\Omega)$ is given by
\begin{equation*} \label{Section FM and a LSM E_z def}
E_z := \left\{
\begin{array}{cc}
  -(\Delta w_z + k^2 w_z ) & \mbox{in } B(z,\epsilon)   \\
0  &   \mbox{otherwise}
\end{array}
\right.,
\end{equation*}
here $w_z(x):= \chi_{|x-z|}\Phi(x,z)$ in $\mathbb{R}^d$, and $\chi \in C^\infty(\mathbb{R})$ is a cut off function with $\chi(t)=1$ for $|t| \ge \epsilon$ and $\chi(t)=0$ for $|t| \le \epsilon/2$, and $G$ is the fundamental solution given by \eqref{Green function def}.
\end{lemma}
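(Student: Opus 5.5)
The proof has two parts. First I verify the explicit representation $\phi_z=\mathcal{H}^*E_z$ when $\overline{B(z,\epsilon)}\subset\Omega$. Then I handle the converse: if $z\notin\Omega$, then $\phi_z\notin\mbox{Range}(\mathcal{H}^*)$. The case $z\in\Omega$ of the "if and only if" reduces to the first part, because any $z\in\Omega$ admits some $\epsilon>0$ with $\overline{B(z,\epsilon)}\subset\Omega$.

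\textbf{Representation for $z\in\Omega$.} Since $\chi(|x-z|)=0$ for $|x-z|\le\epsilon/2$, the function $w_z=\chi(|x-z|)\Phi(x,z)$ is smooth on $\mathbb{R}^d$. It coincides with $\Phi(\cdot,z)$ for $|x-z|\ge\epsilon$, so there it solves the Helmholtz equation. Hence $E_z:=-(\Delta+k^2)w_z$ is smooth and supported in $\overline{B(z,\epsilon)}\subset\Omega$, and $E_z\in L^2(\Omega)$. Because $w_z$ is radiating, Green's representation gives $w_z(x)=\int_{B(z,\epsilon)}\Phi(x,y)E_z(y)\ind y$ for all $x$. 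I then compare asymptotics as $|x|\to\infty$. On one side, $\Phi(x,y)$ behaves like $\gamma_d\frac{e^{ik|x|}}{|x|^{(d-1)/2}}e^{-ik\hat x\cdot y}$ up to lower order terms. On the other side, $w_z=\Phi(\cdot,z)$ for large $|x|$, and its far field is $e^{-ik\hat x\cdot z}$ with the same constant $\gamma_d$, which is exactly the normalization used in the paper. Matching the two gives $\phi_z(\hat x)=\int_\Omega e^{-ik\hat x\cdot y}E_z(y)\ind y=(\mathcal{H}^*E_z)(\hat x)$.

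\textbf{Converse for $z\notin\Omega$.} Suppose $\phi_z=\mathcal{H}^*h$ for some $h\in L^2(\Omega)$. Define the volume potential $v(x)=\int_\Omega\Phi(x,y)h(y)\ind y$. It is radiating and lies in $H^2_{loc}$, and its far field is $\mathcal{H}^*h=\phi_z$, which is also the far field of $\Phi(\cdot,z)$. By Rellich's lemma and unique continuation, $v=\Phi(\cdot,z)$ in the unbounded connected component of $\mathbb{R}^d\setminus(\overline\Omega\cup\{z\})$. Connectedness of $\mathbb{R}^d\setminus\overline\Omega$ means this component is all of $\mathbb{R}^d\setminus(\overline\Omega\cup\{z\})$. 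Two cases follow.

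\begin{itemize}
\item If $z\notin\overline\Omega$, then $v$ is bounded near $z$ while $\Phi(\cdot,z)$ blows up there, which is a contradiction.
\item If $z\in\partial\Omega$, I would use the $H^2$ regularity of $v$. Since $h\in L^2$, $v\in H^2_{loc}(\mathbb{R}^d)$, hence $v\in H^1$ in a neighborhood of $z$. But $\Phi(\cdot,z)$ is not $H^1$ in any exterior cone or region touching $z$: its gradient behaves like $|x-z|^{1-d}$, which is not square integrable on the exterior part near $z$. The exterior part has positive-measure cone-like portions because $\partial\Omega$ is Lipschitz, so this is again a contradiction.
\end{itemize}

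The main obstacle is the boundary point case $z\in\partial\Omega$. It needs the Lipschitz cone condition, which guarantees a truncated cone at $z$ contained in $\mathbb{R}^d\setminus\overline\Omega$. On that cone, $\int|\nabla\Phi(\cdot,z)|^2\sim\int_0^\delta r^{2-2d}r^{d-1}\,dr=\infty$ for $d=2,3$, whereas $v$ has locally square-integrable gradient. I would prove this divergence estimate explicitly. The remaining steps are the standard Rellich and unique continuation arguments and the far-field asymptotics of volume potentials.
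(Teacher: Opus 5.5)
Your proposal is correct and follows the same route as the paper's proof. For $z\in\Omega$, both use the volume-potential representation $w_z=\int_\Omega \Phi(\cdot,y)E_z(y)\,dy$ of the radiating function $w_z$ and then match far fields; for $z\notin\Omega$, both use Rellich's lemma and unique continuation to get $v=\Phi(\cdot,z)$ outside $\overline\Omega\cup\{z\}$, followed by a singularity contradiction. Your separate treatment of $z\in\partial\Omega$, via the $H^2_{loc}$ regularity of $v$ and the fact that $\nabla\Phi(\cdot,z)$ is not square-integrable on an exterior Lipschitz cone, makes explicit what the paper compresses into the phrase ``the right hand side has a singularity at $x=z$''.
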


The linear sampling method (LSM) and factorization method (FM) for shape characterization are as follows.
\begin{itemize}
\item[(LSM)] The linear sampling method solves the data equation $\mathcal{F} g_z \approx \phi_z$
using a regularization scheme to get a regularized solution $g_{z,\alpha}$ and indicates that
$
\|g_{z,\alpha}\|_{L^2(\mathbb{S}^{d-1})}
$
is large for   $z \in B\backslash \overline{\Omega}$ and is bounded for $z$ with $z \in  \Omega$ (due to Proposition \ref{Section operator Omega range characterization prop} and Lemma \ref{Section FM and a LSM lemma phiz S Omega}). This  is suggested by a partial theory similar to \cite{ColtonKirsch96}; we omit this partial theory since we will show a formulation of the linear sampling method with complete theoretical justification later on.
\item[(FM)] A direct application of Lemma \ref{Section operator Omega range characterization prop}
 and Lemma \ref{Section FM and a LSM lemma phiz S Omega} yields the factorization method: Assume that $k$ is not an interior transmission eigenvalue. Then $z\in \Omega $ if, and only if, $\phi_z \in \mbox{Range}(|\mathcal{F}|^{1/2})$. Here
\begin{equation}\label{Section FM and a LSM FM main result}
\phi_z \in Range(|\mathcal{F}|^{1/2}) \Longleftrightarrow \sum_{n=0}^\infty \frac{|\langle \phi_z, \zeta_n \rangle|^2}{|\mu_n|} < \infty,
\end{equation}
where  $\{ \zeta_n,\mu_n\}_{n=0}^\infty$ is the eigensystem of  $\mathcal{F}$ given by \eqref{Section operator Omega eigensystem}.
\end{itemize}

\subsection{Imaging indicator} \label{section subsection new indicator}
In the following, we study the alternative linear sampling method in the form of 
$$
\langle  g_{z,\alpha}, \phi_z  \rangle_{L^2(\mathbb{S}^{d-1})}
$$
and we demonstrate later a new interpretation of this indicator.    In this section we first demonstrate its viability in shape characterization. The idea is similar to the earlier work \cite{arenslechleiter15,audiberthaddar15} in inverse scattering to justify or generalize the linear sampling method. Such an alternative formulation $\langle  g_{z,\alpha}, \phi_z  \rangle_{L^2(\mathbb{S}^{d-1})}$ may be dated back  to the first paper on linear sampling method of Colton and Kirsch  \cite{ColtonKirsch96}. Indeed this particular formulation  $\langle  g_{z,\alpha}, \phi_z  \rangle_{L^2(\mathbb{S}^{d-1})}$ has already been used in \cite[Theorem 7.6]{kirsch2008factorization} to demonstrate the shape characterization using the Tikhonov regularization, see also \cite{arenslechleiter15} for inverse obstacle scattering. In the following, we show the shape characterization using general regularization schemes.

To begin with, we introduce a family of regularization schemes $\{ \mathcal{R}_\alpha\}_{\alpha>0}$ by
\begin{equation} \label{Section FM and a LSM R_alpha def}
\mathcal{R}_\alpha h := \sum_{n=0}^\infty f_\alpha(|\mu_n|^2) \overline{\mu_n} \langle h, \zeta_n \rangle \zeta_n,
\end{equation}
where $f_\alpha$ is a regularizing filter that is a bounded, real-valued, and piecewise continuous function $f_\alpha: (0, \infty) \to \mathbb{R}$ such that
\begin{equation} \label{Section FM and a LSM f_alpha prop}
\lim_{\alpha \to 0 } f_\alpha(\mu) = \frac{1}{\mu} \mbox{ for all } \mu>0,  \quad |\mu f_\alpha(\mu)| \le d_0 \mbox{ for all } \alpha \ge 0 \mbox{ and } \mu>0,
\end{equation}
here $d_0>0$ is a constant. 

With this family of  regularization schemes $\{ \mathcal{R}_\alpha\}_{\alpha>0}$, one can introduce a family of regularized solutions by $g_{z,\alpha} = \mathcal{R}_\alpha \phi_z$.
Classical  regularizations include the Tikhonov regularization with 
$$
f_\alpha(\mu) \to \frac{1}{\mu+\alpha} \quad\mbox{ so that }\quad
g_{z,\alpha} \to  \sum_{n=0}^\infty \frac{ \overline{\mu_n} }{\mu_n^2 + \alpha} \langle \phi_z, \zeta_n \rangle \zeta_n,
$$
and the singular value cut off regularization with
$$
f_\alpha(\mu) \to  \left\{
\begin{array}{cc}
1/\mu  & \mu \ge \alpha    \\
0  &   \mbox{otherwise}
\end{array}
\right.
\quad \mbox{ so that }\quad
g_{z,\alpha} \to  \sum_{\mu_n \ge \alpha}^\infty \frac{1}{\mu_n} \langle \phi_z, \zeta_n \rangle \zeta_n.
$$
The Landerweber iterative method \cite{Kirsch21} also belongs to this regularization family.
Similar to the inverse obstacle scattering case \cite{arenslechleiter15}, our shape characterization result is as follows. We point out that \cite[Theorem 7.6]{kirsch2008factorization} already demonstrated the following result using the Tikhonov regularization.
\begin{theorem}\label{Section FM and a LSM main of LSM theorem}
Assume that $k$ is not an interior transmission eigenvalue.
Suppose that $\{ \mathcal{R}_\alpha\}_{\alpha>0}$ is a family of regularization schemes given by \eqref{Section FM and a LSM R_alpha def}-- \eqref{Section FM and a LSM f_alpha prop} and set $g_{z,\alpha}  = \mathcal{R}_\alpha \phi_z$. The   following characterizations of the support $\Omega$ hold.
\begin{itemize}
\item If $z \in B\backslash \overline{\Omega}$, then $\langle  g_{z,\alpha}, \phi_z  \rangle_{L^2(\mathbb{S}^{d-1})}$ cannot remain bounded as $\alpha \to 0$. 
\item If $z \in \Omega$, then $\langle  g_{z,\alpha}, \phi_z  \rangle_{L^2(\mathbb{S}^{d-1})}$ remains bounded as $\alpha \to 0$. 
\end{itemize}

\end{theorem}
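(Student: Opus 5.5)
Everything will be computed in the eigenbasis of the normal operator $\mathcal{F}$. By \eqref{Section FM and a LSM R_alpha def},
\begin{equation*}
\langle g_{z,\alpha},\phi_z\rangle = \sum_{n=0}^\infty f_\alpha(|\mu_n|^2)\,\overline{\mu_n}\,|\langle \phi_z,\zeta_n\rangle|^2 .
\end{equation*}
The idea is to compare this series with the factorization-method series $\sum_n |\langle\phi_z,\zeta_n\rangle|^2/|\mu_n|$ from \eqref{Section FM and a LSM FM main result}. For this, write $\mu_n=|\mu_n|e^{i\eta_n}$ and set $a_n:=|\langle \phi_z,\zeta_n\rangle|^2\ge 0$ and $t_n:=|\mu_n|^2 f_\alpha(|\mu_n|^2)$. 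Then each term equals $t_n e^{-i\eta_n} a_n/|\mu_n|$. By \eqref{Section FM and a LSM f_alpha prop}, $|t_n|\le d_0$ and $t_n\to 1$ as $\alpha\to0$ for each fixed $n$.

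For $z\in\Omega$, Lemma \ref{Section FM and a LSM lemma phiz S Omega} and Lemma \ref{Section operator Omega range characterization prop} give $\phi_z\in Range(|\mathcal{F}|^{1/2})$. Hence $\sum_n a_n/|\mu_n|<\infty$ by \eqref{Section FM and a LSM FM main result}, and
\begin{equation*}
|\langle g_{z,\alpha},\phi_z\rangle|\le d_0\sum_n \frac{a_n}{|\mu_n|}<\infty
\end{equation*}
uniformly in $\alpha$. By dominated convergence the limit also exists and equals $\sum_n e^{-i\eta_n}a_n/|\mu_n|$, which will be useful later.

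For $z\notin\overline\Omega$, \eqref{Section FM and a LSM FM main result} together with Lemma \ref{Section FM and a LSM lemma phiz S Omega} gives $\sum_n a_n/|\mu_n|=\infty$. We must show the complex sum cannot stay bounded, and the main obstacle is to rule out cancellation among the terms. Two ingredients handle this.
\begin{itemize}
\item[(i)] By Lemma \ref{Section operator mu/|mu| phase interval}, $\eta_n\in[0,\pi-2\eta_\delta)$. Multiplying by a fixed unimodular constant $e^{i(\pi/2-\eta_\delta)}$ rotates all the phases $-\eta_n$ into a sector where $\Re\big(e^{i(\pi/2-\eta_\delta)}e^{-i\eta_n}\big)\ge \sin\eta_\delta>0$.
\item[(ii)] The factor $t_n$ is real but may be negative or vary in size, so a lower bound is needed. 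I would assume, as is standard for these filters (Tikhonov, cut-off, Landweber), that $f_\alpha\ge 0$. If this is not available, I would instead use that $t_n\to1$ termwise.
\end{itemize}

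With these, fix any $N$. For small $\alpha$ we have $t_n\ge 1/2$ for $n\le N$, and $t_n\ge 0$ for all $n$. Therefore
\begin{equation*}
|\langle g_{z,\alpha},\phi_z\rangle|\ge \Re\big(e^{i(\pi/2-\eta_\delta)}\langle g_{z,\alpha},\phi_z\rangle\big)\ge \tfrac12\sin\eta_\delta\sum_{n\le N}\frac{a_n}{|\mu_n|}.
\end{equation*}
Since $N$ is arbitrary and the full series diverges, $\liminf_{\alpha\to0}|\langle g_{z,\alpha},\phi_z\rangle|=\infty$, so in particular the indicator cannot remain bounded.

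If nonnegativity of $f_\alpha$ must be avoided, I would argue by contradiction instead. Suppose $\langle g_{z,\alpha},\phi_z\rangle$ were bounded along a sequence $\alpha_j\to0$. I would try to combine the termwise convergence $t_n\to1$ with Fatou's lemma applied to the rotated real parts, using the fact that the terms of the rotated series have real parts bounded below. This is exactly where I expect the technical difficulty to lie.
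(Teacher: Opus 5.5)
Your argument is correct and is essentially the paper's proof: the paper's quantity $\Im\bigl(-e^{-i\eta_\delta}\langle g_{z,\alpha},\phi_z\rangle\bigr)$ is exactly your $\Re\bigl(e^{i(\pi/2-\eta_\delta)}\langle g_{z,\alpha},\phi_z\rangle\bigr)$. From there the paper bounds each term below by $\sin\eta_\delta\, t_n a_n/|\mu_n|$, truncates at a finite $N$ on which $t_n>1/2$ for small $\alpha$, and for $z\in\Omega$ uses the same bound $d_0\sum_n a_n/|\mu_n|$.

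The paper's lower bound also silently needs $f_\alpha\ge 0$, both for the termwise sector estimate and for dropping the tail $n>N$, so your explicit assumption is the right one. Drop the Fatou fallback, though: without $f_\alpha\ge 0$ the rotated terms are only bounded below by $-d_0\sin(\eta_\delta+\eta_n)\,a_n/|\mu_n|$, which is not summable for $z\notin\overline{\Omega}$, so Fatou's lemma has no summable minorant to work with.
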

\begin{proof}
We first derive an expression of $\langle  g_{z,\alpha}, \phi_z  \rangle_{L^2(\mathbb{S}^{d-1})}$. From the definition of $g_{z,\alpha}$, one gets  $g_{z,\alpha}  = \mathcal{R}_\alpha \phi_z$ whereby
\begin{equation*}  
g_{z,\alpha} =  \sum_{n=0}^\infty f_\alpha(\mu_n^2) \overline{\mu_n} \langle \phi_z, \zeta_n \rangle \zeta_n,
\end{equation*}
in this way we obtain
\begin{eqnarray} \label{Section FM and a LSM main of LSM theorem proof expression}
&&\langle g_{z,\alpha},  \phi_z \rangle = \sum_{n=0}^\infty f_\alpha(\mu_n^2) \overline{\mu_n}  |\langle \phi_z, \zeta_n \rangle|^2 = \sum_{n=0}^\infty f_\alpha(\mu_n^2)|\mu_n|^2   \frac{\overline{\mu_n}}{|\mu_n|}  \frac{1}{|\mu_n|}  |\langle \phi_z, \zeta_n \rangle|^2.
\end{eqnarray}

When $z \in B\backslash \overline{\Omega}$, first note from the factorization method result \eqref{Section FM and a LSM FM main result} that $\phi_z \not\in Range(|\mathcal{F}|^{1/2})$ so that 
\begin{equation}\label{Section FM and a LSM main of LSM theorem proof FM diverge}
\sum_{n=0}^\infty \frac{|\langle \phi_z, \zeta_n \rangle|^2}{|\mu_n|} = \infty.
\end{equation}
From Lemma \ref{Section operator mu/|mu| phase interval} where $\frac{\mu_n}{|\mu_n|} = e^{i \eta_n}$ with $\eta_n \in [0,\pi-2\eta_\delta)$ and $\eta_\delta \in (0,\pi/2]$, then we have
 \begin{eqnarray*}
\Im (-e^{-i\eta_\delta } \langle g_{z,\alpha},  \phi_z \rangle ) &=& \sum_{n=0}^\infty f_\alpha(\mu_n^2)|\mu_n|^2     \Im(-e^{-i\eta_\delta }e^{-i \eta_n})  \frac{1}{|\mu_n|}  |\langle \phi_z, \zeta_n \rangle|^2 \\
&\ge& \sum_{n=0}^\infty f_\alpha(\mu_n^2)|\mu_n|^2     \sin(\eta_\delta) \frac{1}{|\mu_n|}  |\langle \phi_z, \zeta_n \rangle|^2  .
\end{eqnarray*}
Now since the series \eqref{Section FM and a LSM main of LSM theorem proof FM diverge} diverges, then for any large $M>0$, there exists $N_M>0$ such that 
$$
 \sum_{n=0}^{N_M} \frac{|\langle \phi_z, \zeta_n \rangle|^2}{|\mu_n|} >2M,
$$
then one can chose $\alpha_M>0$ (due to the property of $f_\alpha$ in  \eqref{Section FM and a LSM f_alpha prop}) such that
$$
f_\alpha(\mu_n^2) > \frac{1}{2\mu_n^2}, \quad \forall \alpha \in (0,\alpha_M),\quad \mbox{for all }  n=0,1,\cdots,N_M.
$$
This yields that
$$
\Im (-e^{-i\eta_\delta } \langle g_{z,\alpha},  \phi_z \rangle )   \ge \sin(\eta_\delta)\sum_{n=0}^{N_M} f_\alpha(\mu_n^2) |\mu_n| |\langle \phi_z, \zeta_n \rangle|^2 > \frac{\sin(\eta_\delta)}{2}\sum_{n=0}^{N_M} \frac{|\langle \phi_z, \zeta_n \rangle|^2}{\mu_n} >\sin(\eta_\delta) M,
$$
for all $\alpha \in (0,\alpha_M)$.
This proves $\lim_{\alpha \to 0}\Im (-e^{-i\eta_\delta } \langle g_{z,\alpha},  \phi_z \rangle ) = \infty$, i.e.,  $\langle  g_{z,\alpha}, \phi_z  \rangle_{L^2(\mathbb{S}^{d-1})}$ cannot remain bounded as $\alpha \to 0$. 

Now we consider the case when $z \in \Omega$, then from the factorization method result \eqref{Section FM and a LSM FM main result} one can obtain that there  exists the unique solution $g_z^{FM} \in L^2(B)$ to $|\mathcal{F}|^{1/2} g_z^{FM} = \phi_z$   and
$$
 \|g_z^{FM}\|^2 = \sum_{n=0}^\infty \frac{|\langle \phi_z, \zeta_n \rangle|^2}{|\mu_n|} < \infty.
$$
Note that $f_\alpha$ satisfies \eqref{Section FM and a LSM f_alpha prop}, then we have from \eqref{Section FM and a LSM main of LSM theorem proof expression} that
$$
|\langle  g_{z,\alpha}, \phi_z  \rangle_{L^2(\mathbb{S}^{d-1})}|  \le \sum_{n=0}^\infty f_\alpha(\mu_n^2) |\mu_n|^2  \frac{1}{|\mu_n|} |\langle \phi_z, \zeta_n \rangle|^2  \le d_0  \sum_{n=0}^\infty \frac{|\langle \phi_z, \zeta_n \rangle|^2}{\mu_n} < \infty.
$$
i.e., $\langle  g_{z,\alpha}, \phi_z  \rangle_{L^2(\mathbb{S}^{d-1})}$ remains bounded as $\alpha \to 0$. 
This  completes the proof.
\end{proof}
The indicator function $\langle  g_{z,\alpha}, \phi_z  \rangle_{L^2(\mathbb{S}^{d-1})}$ in Theorem \ref{Section FM and a LSM main of LSM theorem} allows to determine   the support  $\Omega$. In \cite{audibertmeng23} the authors showed that this indicator allows to reconstruct the average of the unknown for the linearized problem. For the nonlinear case, we show in the next section -- through the lens of the inverse scattering problem -- that this indicator $\langle  g_{z,\alpha}, \phi_z  \rangle_{L^2(\mathbb{S}^{d-1})}$ represents a weighted average of a nonlinear information about the unknown.

\section{Characterizing nonlinear information for the imaging indicator} \label{Section parameter LSM}
To demonstrate a new interpretation of the indicator introduced in Section \ref{section subsection new indicator}, 
We first prove the following lemmas that are needed for the main theorem.
\begin{lemma}\label{Section parameter LSM lemma}
Assume that $k$ is not an interior transmission eigenvalue.
Suppose that $\{ \mathcal{R}_\alpha\}_{\alpha>0}$ is a family of regularization schemes given by \eqref{Section FM and a LSM R_alpha def}-- \eqref{Section FM and a LSM f_alpha prop} and set $g_{z,\alpha}  = \mathcal{R}_\alpha \phi_z$. Then it holds for any   $B(z,\epsilon) \subset \Omega$ that 
\begin{equation*}%
\| \mathcal{H} g_{z,\alpha}\|_{L^2(\Omega)} \le \frac{d_0}{T_{inf}} \|E_z\|_{L^2(\Omega)}, \quad \forall \alpha>0,
\end{equation*}
where $d_0$ given by \eqref{Section FM and a LSM f_alpha prop} is a constant independent of $\alpha$, and $T_{inf}>0$ is the lower bound for the operator $\mathcal{T}$ given by \eqref{Section operator middle positive definite prop eqn}.
\end{lemma}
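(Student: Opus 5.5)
We want to bound $\|\mathcal H g_{z,\alpha}\|$. The natural route is through the quadratic form of the coercive middle operator. By \eqref{Section operator Omega fac coercive factorization} and \eqref{Section operator middle positive definite prop eqn}, since $\mathcal H g_{z,\alpha}\in R(\mathcal H)\subset Y_\Omega$,
\begin{equation*}
T_{\inf}\|\mathcal H g_{z,\alpha}\|_{L^2(\Omega)}^2 \le |\langle \mathcal P_\Omega\mathcal T\mathcal P_\Omega \mathcal H g_{z,\alpha},\mathcal H g_{z,\alpha}\rangle_{L^2(\Omega)}| = |\langle \mathcal F g_{z,\alpha}, g_{z,\alpha}\rangle| .
\end{equation*}
The task is therefore to bound $|\langle \mathcal F g_{z,\alpha}, g_{z,\alpha}\rangle|$ by a multiple of $\|\mathcal H g_{z,\alpha}\|\,\|E_z\|$. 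Then dividing by $\|\mathcal H g_{z,\alpha}\|$ (the case $\mathcal H g_{z,\alpha}=0$ is trivial) gives the claim.

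First I compute in the eigensystem \eqref{Section operator Omega eigensystem}. With $g_{z,\alpha}=\sum_n f_\alpha(|\mu_n|^2)\overline{\mu_n}\langle\phi_z,\zeta_n\rangle\zeta_n$, set $c_n:=f_\alpha(|\mu_n|^2)|\mu_n|^2\in\mathbb R$, so that $|c_n|\le d_0$ by \eqref{Section FM and a LSM f_alpha prop}. Then
\begin{equation*}
\mathcal F g_{z,\alpha}=\sum_n c_n\langle\phi_z,\zeta_n\rangle\zeta_n,\qquad \langle \mathcal F g_{z,\alpha},g_{z,\alpha}\rangle=\sum_n c_n^2 f_\alpha(|\mu_n|^2)\mu_n|\langle\phi_z,\zeta_n\rangle|^2 .
\end{equation*}
Using $f_\alpha(|\mu_n|^2)\mu_n = c_n/\overline{\mu_n}$, this sum equals $\langle \mathcal A\phi_z,g_{z,\alpha}\rangle$ for the operator $\mathcal A:=\sum_n c_n\langle\cdot,\zeta_n\rangle\zeta_n$, whose norm is at most $d_0$ (the $\zeta_n$ are orthonormal because $\mathcal F$ is normal). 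Indeed $\langle \mathcal A\phi_z,g_{z,\alpha}\rangle=\sum_n c_n\langle\phi_z,\zeta_n\rangle\overline{f_\alpha(|\mu_n|^2)\overline{\mu_n}\langle\phi_z,\zeta_n\rangle}$, which matches since $f_\alpha$ is real.

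Next I insert $\phi_z=\mathcal H^*E_z$ from Lemma \ref{Section FM and a LSM lemma phiz S Omega}, valid because $B(z,\epsilon)\subset\Omega$. Since $\mathcal A$ is a function of the normal operator $\mathcal F$, it commutes with $\mathcal F$ and $\mathcal F^*$, but I need to move it across $\mathcal H^*$. A cleaner route is to write
\begin{equation*}
\langle\mathcal F g_{z,\alpha},g_{z,\alpha}\rangle=\sum_n c_n\langle \phi_z,\zeta_n\rangle\overline{\langle g_{z,\alpha},\zeta_n\rangle}=\langle \phi_z,\mathcal A g_{z,\alpha}\rangle=\langle E_z,\mathcal H\mathcal A g_{z,\alpha}\rangle_{L^2(\Omega)},
\end{equation*}
using that $\mathcal A$ is self-adjoint because $c_n$ is real. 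This reduces the problem to bounding $\|\mathcal H\mathcal A g_{z,\alpha}\|$ by $d_0\|\mathcal H g_{z,\alpha}\|$, which is the main obstacle, since $\mathcal A$ acts on $L^2(\mathbb S^{d-1})$ and $\mathcal H$ is not unitary.

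For that obstacle, I would try to express the weights differently. Writing $g_{z,\alpha}=\sum_n a_n\zeta_n$, the quantity $\|\mathcal H g\|^2$ is comparable to $|\langle \mathcal F g,g\rangle|=|\sum_n\mu_n|a_n|^2|$. By Lemma \ref{Section operator mu/|mu| phase interval}, all $\mu_n$ lie in a sector of opening less than $\pi$, so this is equivalent, with constants depending on $\eta_\delta$, to $\sum_n|\mu_n||a_n|^2=\||\mathcal F|^{1/2}g\|^2$. Then $\||\mathcal F|^{1/2}\mathcal A g\|\le d_0\||\mathcal F|^{1/2}g\|$ trivially. However, this introduces an extra constant, which is not allowed since the stated bound is $d_0/T_{\inf}$ exactly. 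So I would first try the direct estimate: bound the final expression by Cauchy–Schwarz in weighted form,
\begin{equation*}
|\langle \mathcal F g_{z,\alpha},g_{z,\alpha}\rangle|\le d_0\Big(\sum_n|\mu_n|^{-1}|\langle\phi_z,\zeta_n\rangle|^2\Big)^{1/2}\Big(\sum_n|\mu_n||a_n|^2\Big)^{1/2},
\end{equation*}
and identify the first factor with $\|g_z^{FM}\|$. The bound $\|g_z^{FM}\|\lesssim\|E_z\|/\sqrt{T_{\inf}}$ would come via Kirsch's range-identity constants. I expect the sharp constant $d_0/T_{\inf}$ to come out only if $\mathcal F$ is treated as positive; otherwise I will settle for a constant depending also on $\eta_\delta$. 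That constant-tracking step is where I expect difficulty.
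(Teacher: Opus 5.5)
Your first step, $T_{\inf}\|\mathcal{H}g_{z,\alpha}\|^2\le|\langle\mathcal{F}g_{z,\alpha},g_{z,\alpha}\rangle|$, is the paper's first step too. Write $b_n:=\langle\phi_z,\zeta_n\rangle$. The paper then argues linearly:
\[
|\langle\mathcal{F}g_{z,\alpha},g_{z,\alpha}\rangle|\le d_0\sum_n f_\alpha(|\mu_n|^2)|\mu_n||b_n|^2=d_0|\langle\phi_z,g_{z,\alpha}\rangle|=d_0|\langle E_z,\mathcal{H}g_{z,\alpha}\rangle_{L^2(\Omega)}|,
\]
and then divides by $\|\mathcal{H}g_{z,\alpha}\|$. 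The middle equality is justified there by treating $\mu_n$ as real. Your suspicion about this route is correct. Since $\Im\mu_n>0$, one only has $|\langle\phi_z,g_{z,\alpha}\rangle|=|\sum_n f_\alpha(|\mu_n|^2)\mu_n|b_n|^2|\le\sum_n f_\alpha(|\mu_n|^2)|\mu_n||b_n|^2$, which is the wrong direction. The sector property repairs it only at the cost of a factor $1/\sin\eta_\delta$ (and only if $f_\alpha\ge0$).

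Your proposal nonetheless has a genuine gap. It stops at an $\eta_\delta$-dependent constant and leaves $\|g_z^{FM}\|\le T_{\inf}^{-1/2}\|E_z\|$ unproved. Your belief that $d_0/T_{\inf}$ requires $\mathcal{F}\ge0$ is also mistaken.

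The missing observation is that the second factor of your weighted Cauchy--Schwarz should not be compared with $\|\mathcal{H}g_{z,\alpha}\|$; it is controlled directly by the factorization-method norm. With your $a_n$ and $c_n$ one has $|\mu_n||a_n|^2=c_n^2|b_n|^2/|\mu_n|$, so
\[
|\langle\mathcal{F}g_{z,\alpha},g_{z,\alpha}\rangle|=\Big|\sum_n\mu_n|a_n|^2\Big|\le\sum_n|\mu_n||a_n|^2\le d_0^2\sum_n\frac{|b_n|^2}{|\mu_n|}=d_0^2\|g_z^{FM}\|^2 .
\]
This needs neither the sector lemma nor a sign condition on $f_\alpha$.

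For the remaining factor, set $\varphi_n:=|\mu_n|^{-1/2}\mathcal{H}\zeta_n\in Y_\Omega$. Then $\langle\mathcal{T}\varphi_n,\varphi_m\rangle_{L^2(\Omega)}=(\mu_n/|\mu_n|)\delta_{nm}$ and $b_n=|\mu_n|^{1/2}\langle E_z,\varphi_n\rangle_{L^2(\Omega)}$, hence $\|g_z^{FM}\|^2=\sum_n|\langle E_z,\varphi_n\rangle_{L^2(\Omega)}|^2$. For a finite sum $x=\sum_n e_n\varphi_n$, coercivity \eqref{Section operator middle positive definite prop eqn} gives
\[
T_{\inf}\|x\|^2\le\Big|\sum_n|e_n|^2\mu_n/|\mu_n|\Big|\le\sum_n|e_n|^2 .
\]
Take $e_n=\langle E_z,\varphi_n\rangle_{L^2(\Omega)}$ for $n\le N$ and test $E_z$ against $x$. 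This yields the Bessel bound $\sum_n|\langle E_z,\varphi_n\rangle_{L^2(\Omega)}|^2\le T_{\inf}^{-1}\|E_z\|^2$. Together with your first step, $T_{\inf}\|\mathcal{H}g_{z,\alpha}\|^2\le d_0^2T_{\inf}^{-1}\|E_z\|^2$, which is exactly the stated constant. Equivalently, $\mathcal{H}g_{z,\alpha}=\sum_n c_n\overline{\mu_n}|\mu_n|^{-1}\langle E_z,\varphi_n\rangle_{L^2(\Omega)}\varphi_n$: a diagonal multiplier of norm at most $d_0$ between synthesis and analysis operators of norm at most $T_{\inf}^{-1/2}$.

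Drop the detour through $\|\mathcal{H}\mathcal{A}g_{z,\alpha}\|\le d_0\|\mathcal{H}g_{z,\alpha}\|$. In the $\varphi_n$ coordinates it is a diagonal multiplier conjugated by a non-orthonormal Riesz basis, so it is bounded only up to the ratio of the Riesz bounds.

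There is also a slip: $\langle\mathcal{F}g_{z,\alpha},g_{z,\alpha}\rangle=\sum_n c_n f_\alpha(|\mu_n|^2)\mu_n|b_n|^2$, not $c_n^2$.

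Compared with the paper, this completed route bounds the quadratic form by $d_0^2\|g_z^{FM}\|^2$ rather than linearly by $d_0|\langle E_z,\mathcal{H}g_{z,\alpha}\rangle|$. That is what makes the constant $d_0/T_{\inf}$ legitimate.
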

\begin{proof}
Note from Proposition \ref{Section operator middle positive definite prop} that  $\mathcal{P}_\Omega\mathcal{T}\mathcal{P}_\Omega:Y_\Omega \to Y_\Omega$ is coercive and that $\mathcal{H}  g_{z,\alpha} \in Y_\Omega$, then it follows that  
\begin{eqnarray}\label{Section parameter LSM lemma proof eqn1}
&& T_{\inf} \|  \mathcal{H}  g_{z,\alpha} \|^2_{L^2(\Omega)} \le  |\langle \mathcal{P}_\Omega \mathcal{T} \mathcal{P}_\Omega \mathcal{H}  g_{z,\alpha},   \mathcal{H}  g_{z,\alpha}\rangle_{L^2(\Omega)}|=| \langle \mathcal{F}  g_{z,\alpha},   g_{z,\alpha} \rangle|.\end{eqnarray}
Note that $g_{z,\alpha}$ is given by  $g_{z,\alpha}  = \mathcal{R}_\alpha \phi_z = \sum_{n=0}^\infty f_\alpha(\mu_n^2) \overline{\mu_n} \langle \phi_z, \zeta_n \rangle \zeta_n$ whereby
\begin{equation*}  
\mathcal{F}g_{z,\alpha} =  \sum_{n=0}^\infty f_\alpha(\mu_n^2) |\mu^2_n| \langle \phi_z, \zeta_n \rangle \zeta_n,
\end{equation*}
which yields (where one notes that $f_\alpha$ and $\mu_n$ are real-valued)
\begin{equation*}  
|\langle \mathcal{F}  g_{z,\alpha},   g_{z,\alpha} \rangle| =  \sum_{n=0}^\infty [f_\alpha(\mu_n^2)]^2 |\mu_n|^3 |\langle \phi_z, \zeta_n \rangle|^2 \overset{\eqref{Section FM and a LSM f_alpha prop}}{\le}  d_0\sum_{n=0}^\infty f_\alpha(\mu_n^2) |\mu_n| |\langle \phi_z, \zeta_n \rangle|^2 = d_0 |\langle \phi_z,   g_{z,\alpha} \rangle|,
\end{equation*}
where it is noted that $|\langle \phi_z,   g_{z,\alpha} \rangle|$ is always bounded since $z \in \Omega$ and Theorem~\ref{Section FM and a LSM main of LSM theorem}.

Now from the above equation and \eqref{Section parameter LSM lemma proof eqn1},  we have that $ \|\mathcal{P}_\Omega\mathcal{H}  g_{z,\alpha} \|^2_{L^2(\Omega)} <\infty$ and
\begin{eqnarray*}
&&T_{\inf} \| \mathcal{H} g_{z,\alpha}\|^2_{L^2(\Omega)} \le \langle |\mathcal{F}  g_{z,\alpha},   g_{z,\alpha}| \rangle \le  d_0 |\langle  \phi_z,    g_{z,\alpha}  \rangle| = d_0 |\langle E_z,   \mathcal{H} g_{z,\alpha} \rangle_{L^2(\Omega)}| \\
&\le& d_0 \|E_z\|_{L^2(\Omega)}    \|   \mathcal{H} g_{z,\alpha}\|_{L^2(\Omega)},
\end{eqnarray*}
this proves the lemma.
\end{proof}
The main tool of \cite{audibertmeng23} is the prolate spheroidal wave functions and their generalizations which diagonalize the (left and right) factorized operators and sometimes diagonalize the data operator. In the inverse medium scattering case, this is no longer possible since the operator $\mathcal{H}$ is an operator from $L^2(\mathbb{S}^{d-1})$ to $L^2(\Omega)$. One possible idea may be to use a Riesz basis (which was used in the original paper on the factorization method \cite{Kirsch98}) constructed from the eigenfunction $\zeta_n$  \eqref{Section operator Omega eigensystem} of the data operator. In the following we give another set of basis in $Y_\Omega$ which plays a similar role of the prolate spheroidal wave functions and their generalizations in \cite{audibertmeng23}.

\begin{lemma} \label{Section parameter LSM lemma PH*HP}
The operator $\mathcal{P}_\Omega\mathcal{H} \mathcal{H}^* \mathcal{P}_\Omega: Y_\Omega \to Y_\Omega$ is positive definite, compact, and self-adjoint. There exists an orthonormal system $\{ \psi_j \}_{j=0}^\infty$ in $Y_\Omega$ (where we equip $Y_\Omega$ with $L^2(\Omega)$ norm) with corresponding non-zero eigenvalues $\{ |\lambda_j|^2 \}_{j=0}^\infty$ in non-increasing order such that
\begin{equation}\label{Section parameter LSM lemma PH*HP eigensystem}
\mathcal{P}_\Omega\mathcal{H} \mathcal{H}^* \mathcal{P}_\Omega \psi_j = |\lambda_j|^2 \psi_j, \quad \forall j=0,1,\cdots.
\end{equation}

\end{lemma}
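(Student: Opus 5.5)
The plan is to view $\mathcal{A}:=\mathcal{P}_\Omega\mathcal{H}\mathcal{H}^*\mathcal{P}_\Omega$ as the restriction to $Y_\Omega$ of $\mathcal{H}\mathcal{H}^*$, and to get the eigensystem from the spectral theorem for compact self-adjoint operators. The first step is to record that $\mathcal{H}$ maps into $Y_\Omega$ by definition. Hence $\mathcal{P}_\Omega\mathcal{H}=\mathcal{H}$, and taking adjoints gives $\mathcal{H}^*\mathcal{P}_\Omega=\mathcal{H}^*$, which is consistent with $\mathcal{H}^*(\mathcal{P}_\Omega-\mathcal{I})=0$ noted before \eqref{Section operator Omega fac coercive factorization}. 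So $\mathcal{A}=\mathcal{H}\mathcal{H}^*$ on $Y_\Omega$, and it maps $Y_\Omega$ into $R(\mathcal{H})\subset Y_\Omega$. Here $Y_\Omega$ is a closed subspace of $L^2(\Omega)$ and is therefore a Hilbert space under the $L^2(\Omega)$ inner product.

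Next I verify the structural properties.
\begin{itemize}
\item \emph{Self-adjointness:} $(\mathcal{P}_\Omega\mathcal{H}\mathcal{H}^*\mathcal{P}_\Omega)^*=\mathcal{P}_\Omega\mathcal{H}\mathcal{H}^*\mathcal{P}_\Omega$, since $\mathcal{P}_\Omega$ is an orthogonal projection.
\item \emph{Compactness:} $\mathcal{H}$ is an integral operator with the smooth kernel $e^{iky\cdot\hat\theta}$ on the bounded set $\Omega\times\mathbb{S}^{d-1}$. It is therefore Hilbert--Schmidt, and so $\mathcal{H}\mathcal{H}^*$ is compact.
\item \emph{Nonnegativity:} $\langle \mathcal{A}f,f\rangle_{L^2(\Omega)}=\|\mathcal{H}^*f\|^2_{L^2(\mathbb{S}^{d-1})}\ge 0$.
\item \emph{Definiteness:} this is the real content. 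If $\|\mathcal{H}^*f\|=0$ with $f\in Y_\Omega$, then $f\in N(\mathcal{H}^*)\cap Y_\Omega$. Since $Y_\Omega=\overline{R(\mathcal{H})}=N(\mathcal{H}^*)^\perp$, this forces $f=0$. Hence $\langle\mathcal{A}f,f\rangle>0$ for all $0\ne f\in Y_\Omega$, and $\mathcal{A}$ is injective on $Y_\Omega$.
\end{itemize}
Note that "positive definite" can only mean strict positivity here, not a uniform lower bound, since a compact operator on the infinite-dimensional space $Y_\Omega$ cannot be bounded below.

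Finally I apply the Hilbert--Schmidt spectral theorem to the compact self-adjoint operator $\mathcal{A}$ on $Y_\Omega$. It gives an orthonormal system of eigenvectors $\{\psi_j\}$ with real nonzero eigenvalues that accumulate only at $0$, and $Y_\Omega=N(\mathcal{A})\oplus\overline{\operatorname{span}\{\psi_j\}}$. By injectivity $N(\mathcal{A})=\{0\}$, so the $\psi_j$ form a complete orthonormal system (a basis) of $Y_\Omega$. Positivity makes every eigenvalue positive, so I can write it as $|\lambda_j|^2$ with $\lambda_j:=\|\mathcal{H}^*\psi_j\|$, and order the eigenvalues non-increasingly, counting multiplicities. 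Equivalently, $\lambda_j$ are the singular values of $\mathcal{H}^*|_{Y_\Omega}$.

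I expect the only subtle point to be the definiteness step, namely correctly identifying $Y_\Omega$ with $N(\mathcal{H}^*)^\perp$ so that no nonzero element of $Y_\Omega$ is annihilated by $\mathcal{H}^*$. A secondary point is that $Y_\Omega$ is infinite-dimensional, so the system $\{\psi_j\}$ is indeed indexed by $j=0,1,\cdots$. This follows because $Y_\Omega$ contains the restrictions of infinitely many linearly independent Herglotz wave functions, for instance plane waves in distinct directions.
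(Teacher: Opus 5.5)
Your proposal is correct and follows essentially the same route as the paper. Self-adjointness comes from $\mathcal{P}_\Omega^*=\mathcal{P}_\Omega$, nonnegativity from $\langle \mathcal{P}_\Omega\mathcal{H}\mathcal{H}^*\mathcal{P}_\Omega f,f\rangle_{L^2(\Omega)}=\|\mathcal{H}^*\mathcal{P}_\Omega f\|^2$, and strict positivity (injectivity) from $Y_\Omega=N(\mathcal{H}^*)^\perp$, after which the spectral theorem for compact self-adjoint operators finishes. Your extra details, namely compactness via the Hilbert--Schmidt kernel, completeness of $\{\psi_j\}$ in $Y_\Omega$, and infinite-dimensionality of $Y_\Omega$, fill in points the paper leaves implicit.
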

\begin{proof}
First note that $\mathcal{P}_\Omega=\mathcal{P}_\Omega^*$, then $\mathcal{P}_\Omega\mathcal{H} \mathcal{H}^* \mathcal{P}_\Omega$ is self-adjoint and compact; furthermore for any $\psi \in L^2(\Omega)$, 
$$
<\mathcal{P}_\Omega\mathcal{H} \mathcal{H}^* \mathcal{P}_\Omega \psi, \psi>_{L^2(\Omega)} = < \mathcal{H}^* \mathcal{P}_\Omega \psi, \mathcal{H}^* \mathcal{P}_\Omega\psi>_{L^2(\Omega)} \ge 0,
$$
so that all the eigenvalues are non-negative and if there is a zero eigenvalue, then $\mathcal{H}^* \mathcal{P}_\Omega$ is not injective. However this contradicts to the fact that $\mathcal{H}^*\mathcal{P}_\Omega$ is injective on $Y_\Omega$ due to that $\mathcal{P}_\Omega$ is the projection onto $Y_\Omega$ which is the closure of $\mbox{Range} (\mathcal{H})$ (which coincides with the orthogonal complement of the null space of $\mathcal{H}^*$). Then an application of the spectral theory (see for instance \cite[pp. 346--347]{Kirsch21})  proves the lemma.
\end{proof}

Now we are ready to prove the following new interpretation of the imaging indicator.
 \begin{lemma}\label{Section parameter LSM theorem}
 Assume that $k$ is not an interior transmission eigenvalue.
Suppose that $\{ \mathcal{R}_\alpha\}_{\alpha>0}$ is a family of regularization schemes given by \eqref{Section FM and a LSM R_alpha def}-- \eqref{Section FM and a LSM f_alpha prop} and set $g_{z,\alpha}  = \mathcal{R}_\alpha \phi_z$. Then it holds for any   $\overline{B(z,\epsilon)} \subset \Omega$ that
\begin{equation} \label{Section FM and a LSM main of LSM theorem LSM=FM}
\lim_{\alpha \to 0} \langle  g_{z,\alpha}, \phi_z  \rangle_{L^2(\mathbb{S}^{d-1})} = \langle (\mathcal{P}_\Omega \mathcal{T} \mathcal{P}_\Omega)^{-1} \mathcal{P}_\Omega E_z, \mathcal{P}_\Omega E_z  \rangle_{L^2(\Omega)}.
\end{equation}
\end{lemma}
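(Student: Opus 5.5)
Write $A:=\mathcal{P}_\Omega\mathcal{T}\mathcal{P}_\Omega:Y_\Omega\to Y_\Omega$; it is boundedly invertible by Proposition \ref{Section operator middle positive definite prop} and the Lax--Milgram lemma. Set $h_\alpha:=\mathcal{H}g_{z,\alpha}\in Y_\Omega$. The plan is to show that $h_\alpha$ converges weakly to $h_0:=A^{-1}\mathcal{P}_\Omega E_z$. The limit in \eqref{Section FM and a LSM main of LSM theorem LSM=FM} then follows from a single identity, described below.

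\textbf{Indicator as an inner product.} By Lemma \ref{Section FM and a LSM lemma phiz S Omega} we have $\phi_z=\mathcal{H}^*E_z=\mathcal{H}^*\mathcal{P}_\Omega E_z$, since $\mathcal{H}^*(\mathcal{P}_\Omega-\mathcal{I})=0$. Hence
\[
\langle g_{z,\alpha},\phi_z\rangle=\overline{\langle \mathcal{P}_\Omega E_z,h_\alpha\rangle_{L^2(\Omega)}}.
\]
If $h_\alpha\rightharpoonup h_0$, the right-hand side tends to $\overline{\langle \mathcal{P}_\Omega E_z,A^{-1}\mathcal{P}_\Omega E_z\rangle_{L^2(\Omega)}}=\langle A^{-1}\mathcal{P}_\Omega E_z,\mathcal{P}_\Omega E_z\rangle_{L^2(\Omega)}$, which is the claim.

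\textbf{Boundedness.} Lemma \ref{Section parameter LSM lemma} gives $\|h_\alpha\|_{L^2(\Omega)}\le d_0\|E_z\|/T_{\inf}$. So every sequence $\alpha_m\to0$ has a subsequence with $h_{\alpha_m}\rightharpoonup h$ for some $h\in Y_\Omega$; $Y_\Omega$ is closed, hence weakly closed. It remains to show $h=h_0$, which identifies the limit uniquely and gives convergence of the whole family.

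\textbf{Identifying the limit.} Using \eqref{Section operator Omega fac coercive factorization},
\[
\mathcal{H}^*Ah_\alpha=\mathcal{F}g_{z,\alpha}=\sum_n f_\alpha(|\mu_n|^2)|\mu_n|^2\langle\phi_z,\zeta_n\rangle\zeta_n .
\]
Since $|f_\alpha(|\mu_n|^2)|\mu_n|^2|\le d_0$ and $f_\alpha(|\mu_n|^2)|\mu_n|^2\to1$ for each $n$, dominated convergence on the coefficients gives $\mathcal{F}g_{z,\alpha}\to\phi_z$ in $L^2(\mathbb{S}^{d-1})$; this uses $\phi_z\in\overline{R(\mathcal{F})}$ and the eigenbasis being complete for the normal operator $\mathcal{F}$. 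Here we need $\mathcal{F}$ injective, which holds when $k$ is not an interior transmission eigenvalue; otherwise $\phi_z\perp N(\mathcal{F})$, since $\phi_z\in R(\mathcal{H}^*)$. On the other hand, $\mathcal{H}^*A$ is bounded, so it is weakly continuous, and $\mathcal{H}^*Ah_{\alpha_m}\rightharpoonup\mathcal{H}^*Ah$. Therefore
\[
\mathcal{H}^*Ah=\phi_z=\mathcal{H}^*\mathcal{P}_\Omega E_z.
\]
Both $Ah$ and $\mathcal{P}_\Omega E_z$ lie in $Y_\Omega=N(\mathcal{H}^*)^\perp$, and $\mathcal{H}^*$ is injective there, so $Ah=\mathcal{P}_\Omega E_z$, i.e.\ $h=h_0$.

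\textbf{Main obstacle.} The delicate step is this identification of the weak limit. It needs the strong convergence $\mathcal{F}g_{z,\alpha}\to\phi_z$, which rests on injectivity of $\mathcal{F}$ (equivalently, completeness of $\{\zeta_n\}$ on the relevant subspace), and it needs injectivity of $\mathcal{H}^*$ on $Y_\Omega$ in order to cancel $\mathcal{H}^*$. The first thing to do is to verify that $\phi_z\in\overline{\mathrm{span}}\{\zeta_n\}$, using $\phi_z\in R(\mathcal{H}^*)$ and $N(\mathcal{F})=N(\mathcal{H}^*A\mathcal{H})=N(\mathcal{H})=R(\mathcal{H}^*)^\perp$. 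The equality $N(\mathcal{H}^*A\mathcal{H})=N(\mathcal{H})$ follows from coercivity of $A$: if $\mathcal{F}g=0$ then $0=\langle\mathcal{F}g,g\rangle=\langle A\mathcal{H}g,\mathcal{H}g\rangle_{L^2(\Omega)}$, so $\mathcal{H}g=0$.
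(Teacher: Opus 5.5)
Your argument is correct, but it takes a more abstract route than the paper's main proof.

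The paper works with the orthonormal eigenbasis $\{\psi_j\}$ of $\mathcal{P}_\Omega\mathcal{H}\mathcal{H}^*\mathcal{P}_\Omega$ on $Y_\Omega$ (Lemma~\ref{Section parameter LSM lemma PH*HP}). It expands $\mathcal{P}_\Omega\mathcal{T}\mathcal{P}_\Omega\mathcal{H}g_{z,\alpha}$ in that basis and computes each coefficient limit explicitly: $\langle \mathcal{P}_\Omega\mathcal{T}\mathcal{P}_\Omega\mathcal{H}g_{z,\alpha},\psi_j\rangle_{L^2(\Omega)}=|\lambda_j|^{-2}\langle\mathcal{F}g_{z,\alpha},\mathcal{H}^*\mathcal{P}_\Omega\psi_j\rangle\to\langle E_z,\psi_j\rangle_{L^2(\Omega)}$. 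It then interchanges limit and sum using the uniform bound of Lemma~\ref{Section parameter LSM lemma}. In effect, this proves coefficient by coefficient the weak convergence $\mathcal{P}_\Omega\mathcal{T}\mathcal{P}_\Omega\mathcal{H}g_{z,\alpha}\rightharpoonup\mathcal{P}_\Omega E_z$. You obtain the same convergence from weak compactness and the injectivity of $\mathcal{H}^*$ on $Y_\Omega$.

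Your version has three advantages:
\begin{itemize}
\item It needs no auxiliary basis.
\item It states the slightly stronger conclusion $\mathcal{H}g_{z,\alpha}\rightharpoonup(\mathcal{P}_\Omega\mathcal{T}\mathcal{P}_\Omega)^{-1}\mathcal{P}_\Omega E_z$ explicitly.
\item It spells out two steps the paper treats briefly. One is why $\sum_n\langle\phi_z,\zeta_n\rangle\zeta_n=\phi_z$, via injectivity of $\mathcal{F}$ or $\phi_z\perp N(\mathcal{F})$. The other is why the limit may be passed inside the series: coordinatewise convergence plus a uniform bound is exactly weak convergence.
\end{itemize}
The paper's route is more constructive and mirrors the prolate-spheroidal-type diagonalization of its earlier linear analysis.

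Your proof is closest to the paper's alternate proof in Section~\ref{Section GLSM interpretation}. That proof also extracts a weakly convergent subsequence of $\mathcal{H}g_{z,\alpha}$ and identifies the limit through $\mathcal{F}g_{z,\alpha}\to\phi_z$. There, however, the limit is identified as the incident field $v_z$ of the interior transmission problem, using injectivity of $\mathcal{G}=\mathcal{H}^*\mathcal{T}$. This gives the extra representation $\langle v_z,k^2qw_z\rangle_{L^2(\Omega)}$, but it relies on the interior transmission problem machinery, which your argument avoids.
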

\begin{proof}

It is noted that
\begin{equation} \label{Section FM and a LSM main of LSM theorem proof eqn1}
\langle  g_{z,\alpha}, \phi_z  \rangle_{L^2(\mathbb{S}^{d-1})} = \langle  g_{z,\alpha},\mathcal{H}^* E_z \rangle_{L^2(\mathbb{S}^{d-1})} = \langle  \mathcal{H} g_{z,\alpha}, E_z  \rangle_{L^2(\Omega)},
\end{equation}
then we first proceed with finding an expression for $\mathcal{H} g_{z,\alpha} \in Y_\Omega$. 

(a). Indeed from Lemma \ref{Section parameter LSM lemma PH*HP}, we can find the eigensysten expansion 
$$
\mathcal{P}_\Omega \mathcal{T}\mathcal{P}_\Omega\mathcal{H} g_{z,\alpha} = \sum_{j=0}^\infty \langle \mathcal{P}_\Omega \mathcal{T}\mathcal{P}_\Omega\mathcal{H} g_{z,\alpha} , \psi_j \rangle_{L^2(\Omega)} \psi_j,
$$
note that $ \mathcal{P}_\Omega \mathcal{T}\mathcal{P}_\Omega$ has a bounded inverse (cf. Proposition \ref{Section operator middle positive definite prop}), then
\begin{equation} \label{Section FM and a LSM main of LSM theorem proof eqn2}
\mathcal{H} g_{z,\alpha} = \sum_{j=0}^\infty \langle \mathcal{P}_\Omega \mathcal{T}\mathcal{P}_\Omega\mathcal{H} g_{z,\alpha} , \psi_j \rangle_{L^2(\Omega)} \big( \mathcal{P}_\Omega \mathcal{T}\mathcal{P}_\Omega \big)^{-1}\psi_j,
\end{equation}
this together with \eqref{Section FM and a LSM main of LSM theorem proof eqn1} and \eqref{Section FM and a LSM main of LSM theorem proof eqn2} allows us to obtain that
\begin{eqnarray} \label{Section FM and a LSM main of LSM theorem proof eqn3}
\langle  g_{z,\alpha}, \phi_z  \rangle_{L^2(\mathbb{S}^{d-1})}  &=&  \sum_{j=0}^\infty \langle \mathcal{P}_\Omega \mathcal{T}\mathcal{P}_\Omega\mathcal{H} g_{z,\alpha} , \psi_j \rangle_{L^2(\Omega)} \langle \big( \mathcal{P}_\Omega \mathcal{T}\mathcal{P}_\Omega \big)^{-1}\psi_j, E_z\rangle_{L^2(\Omega)} \nonumber \\
&=&  \sum_{j=0}^\infty \langle \mathcal{P}_\Omega \mathcal{T}\mathcal{P}_\Omega\mathcal{H} g_{z,\alpha} , \psi_j \rangle_{L^2(\Omega)} \langle \big( \mathcal{P}_\Omega \mathcal{T}\mathcal{P}_\Omega \big)^{-1}\psi_j, \mathcal{P}_\Omega E_z\rangle_{L^2(\Omega)} \nonumber \\
&=& \sum_{j=0}^\infty \langle \mathcal{P}_\Omega \mathcal{T}\mathcal{P}_\Omega\mathcal{H} g_{z,\alpha} , \psi_j \rangle_{L^2(\Omega)} \langle  \psi_j, \big[\big( \mathcal{P}_\Omega \mathcal{T}\mathcal{P}_\Omega \big)^{-1} \big]^*\big(\mathcal{P}_\Omega E_z\big)\rangle_{L^2(\Omega)}. \qquad
\end{eqnarray}
 From Lemma \ref{Section parameter LSM lemma} we have that $\| \mathcal{H} g_{z,\alpha}\|_{L^2(\Omega)}$ is uniformly bounded with respect to $\alpha$, note that $\big[\big( \mathcal{P}_\Omega \mathcal{T}\mathcal{P}_\Omega \big)^{-1} \big]^*\big(\mathcal{P}_\Omega E_z\big) \in L^2(\Omega)$ whereby $ \sum_{j=0}^\infty   \langle  \psi_j, \big[\big( \mathcal{P}_\Omega \mathcal{T}\mathcal{P}_\Omega \big)^{-1} \big]^*\big(\mathcal{P}_\Omega E_z\big)\rangle_{L^2(\Omega)}$ is convergent (and this series is independent of $\alpha$), we can obtain that the infinite series in  \eqref{Section FM and a LSM main of LSM theorem proof eqn3} is uniformly convergent. This allows to apply the dominated convergence theorem to obtain that
 {\small
 \begin{equation} \label{Section FM and a LSM main of LSM theorem proof eqn4}
\lim_{\alpha \to 0}\langle  g_{z,\alpha}, \phi_z  \rangle_{L^2(\mathbb{S}^{d-1})} 
=\sum_{j=0}^\infty \lim_{\alpha \to 0}\langle \mathcal{P}_\Omega \mathcal{T}\mathcal{P}_\Omega\mathcal{H} g_{z,\alpha} , \psi_j \rangle_{L^2(\Omega)} \langle  \psi_j, \big[\big( \mathcal{P}_\Omega \mathcal{T}\mathcal{P}_\Omega \big)^{-1} \big]^*\big(\mathcal{P}_\Omega E_z\big)\rangle_{L^2(\Omega)}.  
\end{equation}
}

(b). Now we proceed to compute the limit in \eqref{Section FM and a LSM main of LSM theorem proof eqn4}. We first derive 
\begin{eqnarray}\label{Section FM and a LSM main of LSM theorem proof eqn5}
&& \langle \mathcal{P}_\Omega \mathcal{T}\mathcal{P}_\Omega\mathcal{H} g_{z,\alpha} , \psi_j \rangle_{L^2(\Omega)} \overset{\eqref{Section parameter LSM lemma PH*HP eigensystem}}{=} \langle \mathcal{P}_\Omega \mathcal{T}\mathcal{P}_\Omega\mathcal{H} g_{z,\alpha} ,\frac{1}{|\lambda_j|^2}\mathcal{P}_\Omega\mathcal{H} \mathcal{H}^* \mathcal{P}_\Omega \psi_j  \rangle_{L^2(\Omega)} \nonumber \\
&=&  \langle \mathcal{H}^* \mathcal{P}_\Omega \mathcal{T}\mathcal{P}_\Omega\mathcal{H} g_{z,\alpha} ,\frac{1}{|\lambda_j|^2}   \mathcal{H}^* \mathcal{P}_\Omega \psi_j    \rangle_{L^2(\Omega)} = \langle \mathcal{F} g_{z,\alpha} ,\frac{1}{|\lambda_j|^2}   \mathcal{H}^* \mathcal{P}_\Omega \psi_j    \rangle_{L^2(\Omega)}. 
\end{eqnarray}
Again from the definition of $g_{z,\alpha}$,  one gets 
\begin{equation*}  
g_{z,\alpha} =  \sum_{n=0}^\infty f_\alpha(\mu_n^2) \overline{\mu_n} \langle \phi_z, \zeta_n \rangle \zeta_n \quad \mbox{and} \quad \mathcal{F} g_{z,\alpha} =  \sum_{n=0}^\infty f_\alpha(\mu_n^2)  |\mu_n|^2 \langle \phi_z, \zeta_n \rangle \zeta_n,
\end{equation*}
note that $\sum_{n=0}^\infty f_\alpha(\mu_n^2)  |\mu_n|^2 \langle \phi_z, \zeta_n \rangle \zeta_n$ is uniformly convergent respect to $\alpha$, then
\begin{equation*}  
 \lim_{\alpha \to 0}\mathcal{F} g_{z,\alpha} =  \sum_{n=0}^\infty  \lim_{\alpha \to 0} \big(f_\alpha(\mu_n^2)  |\mu_n|^2\big) \langle \phi_z, \zeta_n \rangle \zeta_n = \sum_{n=0}^\infty  \langle \phi_z, \zeta_n \rangle \zeta_n = \phi_z,
\end{equation*}
this limit allows to compute the limit of \eqref{Section FM and a LSM main of LSM theorem proof eqn5} as $\alpha \to 0$,
\begin{eqnarray} \label{Section FM and a LSM main of LSM theorem proof eqn6}
&&\lim_{\alpha \to 0}\langle \mathcal{P}_\Omega \mathcal{T}\mathcal{P}_\Omega\mathcal{H} g_{z,\alpha} , \psi_j \rangle_{L^2(\Omega)} =\langle \lim_{\alpha \to 0} \mathcal{F} g_{z,\alpha} ,\frac{1}{|\lambda_j|^2}   \mathcal{H}^* \mathcal{P}_\Omega \psi_j   \rangle_{L^2(\Omega)} \nonumber \\
&=& \langle \phi_z ,\frac{1}{|\lambda_j|^2}   \mathcal{H}^* \mathcal{P}_\Omega \psi_j   \rangle_{L^2(\Omega)} = \langle \mathcal{H}^* E_z ,\frac{1}{|\lambda_j|^2}   \mathcal{H}^* \mathcal{P}_\Omega \psi_j   \rangle_{L^2(\Omega)} \nonumber\\ 
&=& \langle   E_z ,\frac{1}{|\lambda_j|^2}   \mathcal{H} \mathcal{H}^* \mathcal{P}_\Omega \psi_j   \rangle_{L^2(\Omega)} = \langle   E_z ,  \psi_j   \rangle_{L^2(\Omega)} .
\end{eqnarray}
Then we can compute the limit in \eqref{Section FM and a LSM main of LSM theorem proof eqn4} by the above equation \eqref{Section FM and a LSM main of LSM theorem proof eqn6} to obtain that
 \begin{equation} \label{Section FM and a LSM main of LSM theorem proof eqn7}
\lim_{\alpha \to 0}\langle  g_{z,\alpha}, \phi_z  \rangle_{L^2(\mathbb{S}^{d-1})} 
=\sum_{j=0}^\infty \langle   E_z ,  \psi_j   \rangle_{L^2(\Omega)} \langle  \psi_j, \big[\big( \mathcal{P}_\Omega \mathcal{T}\mathcal{P}_\Omega \big)^{-1} \big]^*\big(\mathcal{P}_\Omega E_z\big)\rangle_{L^2(\Omega)}.  
\end{equation}
 
(c). We finally show the right hand side of \eqref{Section FM and a LSM main of LSM theorem proof eqn7} is equal to the right hand side of \eqref{Section FM and a LSM main of LSM theorem LSM=FM}. This is due to that
 \begin{eqnarray*} \label{Section FM and a LSM main of LSM theorem proof eqn8}
&& \langle [(\mathcal{P}_\Omega \mathcal{T} \mathcal{P}_\Omega)^{-1} ] \mathcal{P}_\Omega E_z, \mathcal{P}_\Omega E_z  \rangle_{L^2(\Omega)} =  \langle   \mathcal{P}_\Omega E_z, \big[\big( \mathcal{P}_\Omega \mathcal{T}\mathcal{P}_\Omega \big)^{-1} \big]^* \mathcal{P}_\Omega E_z  \rangle_{L^2(\Omega)}\\
&=& \sum_{j=0}^\infty \langle   \mathcal{P}_\Omega E_z ,  \psi_j   \rangle_{L^2(\Omega)} \langle  \psi_j, \big[\big( \mathcal{P}_\Omega \mathcal{T}\mathcal{P}_\Omega \big)^{-1} \big]^*\big(\mathcal{P}_\Omega E_z\big)\rangle_{L^2(\Omega)} \\
&=& \sum_{j=0}^\infty \langle    E_z ,  \psi_j   \rangle_{L^2(\Omega)} \langle  \psi_j, \big[\big( \mathcal{P}_\Omega \mathcal{T}\mathcal{P}_\Omega \big)^{-1} \big]^*\big(\mathcal{P}_\Omega E_z\big)\rangle_{L^2(\Omega)}.  
\end{eqnarray*}
Combining the above equation and \eqref{Section FM and a LSM main of LSM theorem proof eqn7} proves \eqref{Section FM and a LSM main of LSM theorem LSM=FM}. This completes the proof.
\end{proof}
\begin{remark}
\label{remarkinjectivityP}
The projection   $\mathcal{P}_\Omega E_z$ is unique, even though $E_z$ is not unique as is seen from the constructive proof of Lemma \ref{Section FM and a LSM lemma phiz S Omega}. This is due to that for any $E_z, \widetilde{E}_z\in L^2(\Omega)$ such that 
$$
\phi_z = \mathcal{H}^* E_z =\mathcal{H}^* \widetilde{E}_z,
$$
we can obtain
$$
E_z - \widetilde{E}_z = \mbox{Null}(\mathcal{H}^*) = Y_\Omega^\perp \quad\mbox{ thereby }\quad \mathcal{P}_\Omega E_z =\mathcal{P}_\Omega \widetilde{E}_z.
$$
The next lemma gives a more explicit expression of $\mathcal{P}_\Omega E_z$.
\end{remark}
 
 \begin{theorem} \label{Section FM and a LSM main of LSM theorem lemma PomegaEz}
 Let $E_z^{\mathcal{P}}:=\mathcal{P}_\Omega E_z$. Then $E_z^{\mathcal{P}} = -(\Delta + k^2) w_z^{\mathcal{P}}$ in $\Omega$, where $w_z^{\mathcal{P}} \in H^2(\Omega)$ has to satisfy the fourth-order boundary value problem
 \begin{eqnarray}
(\Delta + k^2)(\Delta + k^2) w_z^{\mathcal{P}} = 0 &\mbox{ in }& \Omega, \label{Section FM and a LSM main of LSM theorem lemma PomegaEz fourthorder eqn1}\\
w_z^{\mathcal{P}} =\Phi(\cdot,z)&\mbox{ on }& \partial \Omega,\label{Section FM and a LSM main of LSM theorem lemma PomegaEz fourthorder eqn2}\\
\frac{\partial  w_z^{\mathcal{P}} }{\partial \nu}=\frac{\partial\Phi(\cdot,z)}{\partial \nu}&\mbox{ on }& \partial \Omega.\label{Section FM and a LSM main of LSM theorem lemma PomegaEz fourthorder eqn3}
\end{eqnarray}
 \end{theorem}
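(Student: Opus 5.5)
The plan is to characterize $E_z^{\mathcal{P}}=\mathcal{P}_\Omega E_z$ as the minimum-norm element of the affine set $\{E\in L^2(\Omega):\mathcal{H}^*E=\phi_z\}$. This set is nonempty by Lemma \ref{Section FM and a LSM lemma phiz S Omega}, and by Remark \ref{remarkinjectivityP} it equals $E_z+Y_\Omega^\perp$. I will identify $Y_\Omega^\perp$ explicitly, which gives a representation of $E_z-E_z^{\mathcal{P}}$, and then translate the orthogonality condition $E_z^{\mathcal{P}}\in Y_\Omega$ into a PDE.

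\textbf{Step 1: identify $Y_\Omega^\perp$.} Recall that $Y_\Omega=\{v\in L^2(\Omega):(\Delta+k^2)v=0\}$. I claim
\[
Y_\Omega^\perp=\{(\Delta+k^2)u:\ u\in H^2_0(\Omega)\}.
\]
The inclusion ``$\supseteq$'' follows from Green's formula: for $u\in H_0^2(\Omega)$ and $v\in Y_\Omega$, $\langle(\Delta+k^2)u,v\rangle=\langle u,(\Delta+k^2)v\rangle=0$. Both boundary traces of $u$ vanish, and the identity extends from smooth $v$ to $L^2$ solutions by the variational/distributional definition, using density of $C_0^\infty$ in $H_0^2$.

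For ``$\subseteq$'', take $h\in Null(\mathcal{H}^*)$. Then the volume potential $u(x)=\int_\Omega\Phi(x,y)h(y)\ind y$ has vanishing far field, since the far field of $u$ is, up to a constant, $\mathcal{H}^*h$. By Rellich's lemma and unique continuation, $u=0$ in $\mathbb{R}^d\setminus\overline\Omega$. Since $u\in H^2_{loc}(\mathbb{R}^d)$, it follows that $u|_\Omega\in H_0^2(\Omega)$ and $-(\Delta+k^2)u=h$ in $\Omega$. This is the same argument as in the proof of Lemma \ref{Section FM and a LSM lemma phiz S Omega}. I expect this step to be the main technical point, in particular the Lipschitz-boundary justification that $u|_\Omega\in H^2_0$. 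It holds because $u\in H^2$ across $\partial\Omega$ and vanishes identically outside $\Omega$.

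\textbf{Step 2: construct $w_z^{\mathcal{P}}$.} Write $E_z=-(\Delta+k^2)w_z$ in $\Omega$, where $w_z=\chi\Phi(\cdot,z)$ as in Lemma \ref{Section FM and a LSM lemma phiz S Omega}. Note that $w_z\in H^2(\Omega)$ and $w_z=\Phi(\cdot,z)$ near $\partial\Omega$. By Step 1, $E_z-E_z^{\mathcal{P}}=-(\Delta+k^2)u_0$ for some $u_0\in H^2_0(\Omega)$. Set $w_z^{\mathcal{P}}:=w_z-u_0\in H^2(\Omega)$. Then $E_z^{\mathcal{P}}=-(\Delta+k^2)w_z^{\mathcal{P}}$, and since $u_0\in H_0^2(\Omega)$, the traces satisfy $w_z^{\mathcal{P}}=\Phi(\cdot,z)$ and $\partial_\nu w_z^{\mathcal{P}}=\partial_\nu\Phi(\cdot,z)$ on $\partial\Omega$. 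This gives \eqref{Section FM and a LSM main of LSM theorem lemma PomegaEz fourthorder eqn2}--\eqref{Section FM and a LSM main of LSM theorem lemma PomegaEz fourthorder eqn3}. Finally, $E_z^{\mathcal{P}}\in Y_\Omega$ means exactly that $(\Delta+k^2)E_z^{\mathcal{P}}=0$, which is \eqref{Section FM and a LSM main of LSM theorem lemma PomegaEz fourthorder eqn1} in the distributional sense.

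\textbf{Step 3: uniqueness.} I will check that the fourth-order problem determines $E_z^{\mathcal{P}}$, so that the statement characterizes it. Suppose $w_1,w_2$ both solve \eqref{Section FM and a LSM main of LSM theorem lemma PomegaEz fourthorder eqn1}--\eqref{Section FM and a LSM main of LSM theorem lemma PomegaEz fourthorder eqn3}, and let $d=w_1-w_2\in H_0^2(\Omega)$. Then $(\Delta+k^2)d$ lies in $Y_\Omega$ by \eqref{Section FM and a LSM main of LSM theorem lemma PomegaEz fourthorder eqn1}, and also in $Y_\Omega^\perp$ by Step 1. Hence $(\Delta+k^2)d=0$, so both solutions give the same $-(\Delta+k^2)w=E_z^{\mathcal{P}}$. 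The function $w_z^{\mathcal{P}}$ itself may fail to be unique only when $k^2$ is a Dirichlet eigenvalue with an eigenfunction in $H_0^2$, but the induced $E_z^{\mathcal{P}}$ is always unique. This matches the claim that $E_z^{\mathcal{P}}$ is given by any such $w_z^{\mathcal{P}}$.
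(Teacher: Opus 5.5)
Your proof is correct and is essentially the paper's argument repackaged through the identity $Y_\Omega^\perp=(\Delta+k^2)H_0^2(\Omega)$: your $w_z-u_0$ is exactly the paper's volume potential $\int_\Omega\Phi(\cdot,y)E_z^{\mathcal{P}}(y)\ind y$, and both proofs rest on Rellich's lemma forcing that potential to coincide with $w_z$ outside $\Omega$, which gives the Cauchy data on $\partial\Omega$. The one thing to correct is the hedge in Step 3, which is vacuous. Any $d\in H_0^2(\Omega)$ with $(\Delta+k^2)d=0$ extends by zero to a Helmholtz solution on all of $\mathbb{R}^d$ that vanishes on an open set, so $d=0$ by unique continuation; hence $w_z^{\mathcal{P}}$ itself is unique, as the paper shows.
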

 \begin{proof}
Recall in Theorem \ref{Section FM and a LSM lemma phiz S Omega} that $w_z(x)= \chi_{|x-z|}\Phi(x,z)$ in $\mathbb{R}^d$ where $\chi \in C^\infty(\mathbb{R})$ is a cut off function with $\chi(t)=1$ for $|t| \ge \epsilon$ and $\chi(t)=0$ for $|t| \le \epsilon/2$, and $E_z = -(\Delta +k^2) w_z$.
 Set $E_z^{\mathcal{P}}:=\mathcal{P}_\Omega E_z$, $w_z^{\mathcal{P}}:=\int_{\Omega}\Phi(\cdot,y)E_z^{\mathcal{P}}(y) \ind y$ and it follows that $(\Delta +k^2)w_z^{\mathcal{P}} = -E_z^{\mathcal{P}}$. By elliptic regularity \cite[Theorem 4.16]{mclean00}, $w_z^{\mathcal{P}}$ and $w_z$ are functions in $H^2_{loc}(\mathbb{R}^d)$.
Since $E_z^{\mathcal{P}}=\mathcal{P}_\Omega E_z$, then  $\mathcal{H} E_z^{\mathcal{P}}=\mathcal{H}\mathcal{P}_\Omega E_z=\mathcal{H} E_z$, i.e., the far-field patterns of $w_z^{\mathcal{P}}$ and of $w_z$ coincide, thereby by unique continuation $w_z^{\mathcal{P}}=w_z$ outside $\Omega$ and hence
 \begin{eqnarray*}
w_z^{\mathcal{P}} =w_z=\chi_{|\cdot-z|}\Phi(\cdot,z)=\Phi(\cdot,z)&\mbox{ on }& \partial \Omega,\\
\frac{\partial  w_z^{\mathcal{P}} }{\partial \nu}=\frac{\partial  w_z }{\partial \nu} = \frac{\partial ( \chi_{|\cdot-z|}\Phi(\cdot,z))}{\partial \nu}=\frac{\partial\Phi(\cdot,z)}{\partial \nu}&\mbox{ on }& \partial \Omega.
\end{eqnarray*}
 This shows \eqref{Section FM and a LSM main of LSM theorem lemma PomegaEz fourthorder eqn2}--\eqref{Section FM and a LSM main of LSM theorem lemma PomegaEz fourthorder eqn3}.
Note further that $E_z^{\mathcal{P}}=\mathcal{P}_\Omega E_z \in  Y_\Omega$, then  $E_z^{\mathcal{P}}$ satisfies the Helmholtz equation in the distributional sense and hence $(\Delta + k^2)(\Delta + k^2) w_z^{\mathcal{P}} =- (\Delta + k^2) E_z^{\mathcal{P}}= 0$ in the distributional sense, this shows \eqref{Section FM and a LSM main of LSM theorem lemma PomegaEz fourthorder eqn1}. 

{ We now complete the lemma  by showing that the solution is unique. Otherwise suppose that $u \in H^2(\Omega)$ satisfying
\begin{eqnarray*}
(\Delta + k^2)(\Delta + k^2) u = 0 &\mbox{ in }& \Omega,
\end{eqnarray*}
with $u|_{\partial \Omega}=0$ and $\frac{\partial u}{\partial \nu}|_{\partial \Omega}=0$, then integrating by parts twice yields that
\begin{eqnarray*}
    \int_\Omega [(\Delta + k^2)(\Delta + k^2) u] \overline{u} {\, \rm d} x = \int_\Omega [(\Delta + k^2) u] \overline{(\Delta + k^2)u} {\, \rm d} x
\end{eqnarray*}
and therefore $(\Delta + k^2)u=0$. Note that $u|_{\partial \Omega}=0$ and $\frac{\partial u}{\partial \nu}|_{\partial \Omega}=0$, then $u$ has to vanish by the Green's formula. This completes the proof.}
 \end{proof}
 Now we are ready to prove the main theorem.
  \begin{theorem}\label{Section parameter LSM theorem EzP}
 Assume that $k$ is not an interior transmission eigenvalue.
Suppose that $\{ \mathcal{R}_\alpha\}_{\alpha>0}$ is a family of regularization schemes given by \eqref{Section FM and a LSM R_alpha def}-- \eqref{Section FM and a LSM f_alpha prop} and set $g_{z,\alpha}  = \mathcal{R}_\alpha \phi_z$. Then it holds for any $z\in \Omega$  that
\begin{equation} \label{Section FM and a LSM main of LSM theorem LSM=FM EzP}
\lim_{\alpha \to 0} \langle  g_{z,\alpha}, \phi_z  \rangle_{L^2(\mathbb{S}^{d-1})} = \langle (\mathcal{P}_\Omega \mathcal{T} \mathcal{P}_\Omega)^{-1}   E_z^{\mathcal{P}}, E_z^{\mathcal{P}}  \rangle_{L^2(\Omega)},
\end{equation}
where $E_z^{\mathcal{P}} = -(\Delta + k^2) w_z^{\mathcal{P}}$ in $\Omega$ and $w_z^{\mathcal{P}} \in H^2(\Omega)$ is required to satisfy  \eqref{Section FM and a LSM main of LSM theorem lemma PomegaEz fourthorder eqn1}--\eqref{Section FM and a LSM main of LSM theorem lemma PomegaEz fourthorder eqn3}.
\end{theorem}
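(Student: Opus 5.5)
The plan is to combine Lemma \ref{Section parameter LSM theorem} with Theorem \ref{Section FM and a LSM main of LSM theorem lemma PomegaEz}. The only real issue is that Lemma \ref{Section parameter LSM theorem} is stated under the assumption $\overline{B(z,\epsilon)} \subset \Omega$, while the present statement is for an arbitrary $z\in\Omega$. Since $\Omega$ is open, for any $z \in \Omega$ we can choose $\epsilon>0$ small enough that $\overline{B(z,\epsilon)}\subset\Omega$. With this $\epsilon$ we fix the cut-off function $\chi$ and the corresponding $E_z$ from Lemma \ref{Section FM and a LSM lemma phiz S Omega}, so that $\phi_z=\mathcal{H}^*E_z$.

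Next we apply Lemma \ref{Section parameter LSM theorem} with this choice. It gives
\[
\lim_{\alpha\to0}\langle g_{z,\alpha},\phi_z\rangle_{L^2(\mathbb{S}^{d-1})} = \langle (\mathcal{P}_\Omega\mathcal{T}\mathcal{P}_\Omega)^{-1}\mathcal{P}_\Omega E_z,\mathcal{P}_\Omega E_z\rangle_{L^2(\Omega)}.
\]
The left-hand side depends only on $z$, through $\phi_z$ and the regularization scheme, and not on $\epsilon$ or $\chi$. By Remark \ref{remarkinjectivityP}, the projection $\mathcal{P}_\Omega E_z$ is also independent of the particular $E_z$ with $\mathcal{H}^*E_z=\phi_z$. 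Hence the right-hand side is well defined as a function of $z$ alone.

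Finally we identify $\mathcal{P}_\Omega E_z$ with $E_z^{\mathcal{P}}$ via Theorem \ref{Section FM and a LSM main of LSM theorem lemma PomegaEz}. That theorem gives $E_z^{\mathcal{P}}=-(\Delta+k^2)w_z^{\mathcal{P}}$, where $w_z^{\mathcal{P}}\in H^2(\Omega)$ is the unique solution of \eqref{Section FM and a LSM main of LSM theorem lemma PomegaEz fourthorder eqn1}--\eqref{Section FM and a LSM main of LSM theorem lemma PomegaEz fourthorder eqn3}. The boundary data $\Phi(\cdot,z)$ and $\partial_\nu\Phi(\cdot,z)$ are independent of $\epsilon$ because $z\notin\partial\Omega$ and $\chi\equiv1$ near $\partial\Omega$. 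Substituting $E_z^{\mathcal{P}}$ into the displayed identity yields \eqref{Section FM and a LSM main of LSM theorem LSM=FM EzP}.

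The one point that needs care is ensuring that the cut-off in Lemma \ref{Section FM and a LSM lemma phiz S Omega} satisfies $\chi(|x-z|)=1$ on $\partial\Omega$, so that the boundary traces in Theorem \ref{Section FM and a LSM main of LSM theorem lemma PomegaEz} are exactly those of $\Phi(\cdot,z)$. This holds because the support of $1-\chi(|\cdot-z|)$ lies in $B(z,\epsilon)$, whose closure is contained in $\Omega$. The uniqueness part of Theorem \ref{Section FM and a LSM main of LSM theorem lemma PomegaEz} then shows that $E_z^{\mathcal{P}}$ is characterized intrinsically by the boundary value problem.
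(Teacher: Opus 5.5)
Your proposal is correct and follows the paper's proof: fix $\epsilon$ with $\overline{B(z,\epsilon)}\subset\Omega$, apply Lemma~\ref{Section parameter LSM theorem}, and then identify $\mathcal{P}_\Omega E_z$ with $E_z^{\mathcal{P}}$ via Theorem~\ref{Section FM and a LSM main of LSM theorem lemma PomegaEz}. Your remarks that the result does not depend on $\epsilon$ or $\chi$ (via Remark~\ref{remarkinjectivityP}), and that the cut-off equals $1$ near $\partial\Omega$, spell out points the paper leaves implicit.
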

\begin{proof}
For any $z \in \Omega$, let  $\epsilon$ (which depends on $z$)  be such that  $\overline{B(z,\epsilon)} \subset \Omega$, then  Lemma \ref{Section parameter LSM theorem} yields that
\eqref{Section FM and a LSM main of LSM theorem LSM=FM}. Note from Lemma \ref{Section FM and a LSM main of LSM theorem lemma PomegaEz} that $E_z^{\mathcal{P}}=\mathcal{P}_\Omega E_z$ where $E_z^{\mathcal{P}} = -(\Delta + k^2) w_z^{\mathcal{P}}$ in $\Omega$ and $w_z^{\mathcal{P}} \in H^2(\Omega)$ is required to satisfy \eqref{Section FM and a LSM main of LSM theorem lemma PomegaEz fourthorder eqn1}--\eqref{Section FM and a LSM main of LSM theorem lemma PomegaEz fourthorder eqn3}. This completes the proof.
\end{proof} 
Under certain regularity assumption, we prove the  well-posedness of problem \eqref{Section FM and a LSM main of LSM theorem lemma PomegaEz fourthorder eqn1}-\eqref{Section FM and a LSM main of LSM theorem lemma PomegaEz fourthorder eqn3} as follows.
\begin{lemma} \label{lemma existence 4th order problem}
Assume that $\partial \Omega$ is $C^{1,1}$.
For $(f,g) \in H^{3/2}(\partial \Omega) \times H^{1/2}(\partial \Omega)$, the following fourth-order boundary value problem has a unique solution $w\in H^2(\Omega)$ where
 \begin{eqnarray}
(\Delta + k^2)(\Delta + k^2) w = 0 &\mbox{ in }& \Omega, \label{fourthorderprobelmwellposedeqn1}\\
w =f&\mbox{ on }& \partial \Omega, \label{fourthorderprobelmwellposedeqn2}\\
\frac{\partial  w }{\partial \nu}=g&\mbox{ on }& \partial \Omega.\label{fourthorderprobelmwellposedeqn3}
\end{eqnarray}
and we have that
$$
\left\|w\right\|_{H^2(\Omega)} \leq c\big(\left\|f\right\|_{H^{\frac{3}{2}}(\Omega)}+\left\|g\right\|_{H^{\frac{1}{2}}(\Omega)}\big)
$$
for some positive constant $c$.
\end{lemma}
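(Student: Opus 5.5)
The plan is to reduce the fourth-order problem to two second-order problems by a lifting argument, with uniqueness already available. Uniqueness in $H^2(\Omega)$ was shown at the end of the proof of Theorem \ref{Section FM and a LSM main of LSM theorem lemma PomegaEz}: two integrations by parts give $(\Delta+k^2)u=0$, and then the Cauchy data vanish, so $u=0$. Hence only existence and the stability estimate remain.

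\textbf{Step 1 (lifting).} Since $\partial\Omega$ is $C^{1,1}$, the trace map $H^2(\Omega)\ni u\mapsto (u|_{\partial\Omega},\partial_\nu u|_{\partial\Omega})\in H^{3/2}(\partial\Omega)\times H^{1/2}(\partial\Omega)$ is onto and has a bounded right inverse. I will take $w_0\in H^2(\Omega)$ with $w_0=f$ and $\partial_\nu w_0=g$ on $\partial\Omega$, and
$$\|w_0\|_{H^2(\Omega)}\le c\big(\|f\|_{H^{3/2}(\partial\Omega)}+\|g\|_{H^{1/2}(\partial\Omega)}\big).$$
Then I seek $w=w_0+u$ with $u\in H^2_0(\Omega)$, which reduces the problem to $(\Delta+k^2)^2u=-(\Delta+k^2)^2w_0$ in $H^{-2}(\Omega)$.

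\textbf{Step 2 (variational formulation).} The weak form is: find $u\in H^2_0(\Omega)$ such that
$$a(u,v):=\int_\Omega(\Delta+k^2)u\,\overline{(\Delta+k^2)v}\,dx=-\int_\Omega(\Delta+k^2)w_0\,\overline{(\Delta+k^2)v}\,dx\quad \forall v\in H^2_0(\Omega).$$
The right-hand side is bounded by $C\|w_0\|_{H^2}\|v\|_{H^2}$. On $H^2_0(\Omega)$ the norm $\|\Delta v\|_{L^2}$ is equivalent to the $H^2$ norm, by integrating by parts twice and using the Poincaré inequality. So $a(v,v)=\|\Delta v+k^2v\|^2\ge \tfrac12\|\Delta v\|^2-C\|v\|^2$, which is a Gårding inequality; the term $\|v\|^2_{L^2}$ is compact relative to $H^2_0$ by Rellich. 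The associated operator is therefore Fredholm of index zero. Injectivity follows from the uniqueness argument above: $a(u,u)=0$ gives $(\Delta+k^2)u=0$ with zero Cauchy data, hence $u=0$ by Green's representation or unique continuation. The Fredholm alternative then gives existence and $\|u\|_{H^2}\le C\|w_0\|_{H^2}$. Combined with Step 1, this yields the stated estimate, where the norms of $f,g$ are meant on $\partial\Omega$.

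\textbf{Main obstacle.} The delicate point is ensuring that $a$ is positive, not merely Fredholm. If $k^2$ were a Dirichlet eigenvalue, $a(u,u)$ would still vanish only for $u=0$ in $H^2_0$, since vanishing Neumann data plus the Helmholtz equation forces $u=0$. So injectivity holds for every $k$, and the Fredholm route avoids any coercivity-constant issues. I expect the only care needed is the trace surjectivity for $C^{1,1}$ boundaries, which is where that regularity assumption enters, and the equivalence of $\|\Delta\cdot\|$ with the $H^2_0$ norm.
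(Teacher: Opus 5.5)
Your proposal is correct and follows the paper's proof essentially step for step. Both lift the Cauchy data to $H^2(\Omega)$, pose the problem for the $H^2_0(\Omega)$ remainder with the form $\int_\Omega (\Delta+k^2)u\,\overline{(\Delta+k^2)v}\,dx$, use the equivalence of $\|\Delta\cdot\|_{L^2(\Omega)}$ with the $H^2$ norm on $H^2_0(\Omega)$ up to a compact perturbation, and conclude with uniqueness and Fredholm theory. You differ only in two small points: you prove the norm equivalence directly where the paper cites Grisvard, and your right-hand side carries the correct minus sign, which the paper's display drops.
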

\begin{proof}
First let $\theta\in H^2(\Omega)$ be a lifting function \cite{mclean00} such that $\theta=f$ and $\partial \theta/\partial\nu =g$ on $\partial \Omega$  and $\left\|\theta\right\|_{H^2(\Omega)}\leq c(\left\|f\right\|_{H^{\frac{3}{2}}(\Omega)}+\left\|g\right\|_{H^{\frac{1}{2}(}\Omega)})$ for some constant $c>0$.
Let $w_0:=w-\theta$, then $w_0 \in H^2_0(\Omega)$ and we can write \eqref{fourthorderprobelmwellposedeqn1}-\eqref{fourthorderprobelmwellposedeqn3} as an equivalent variational formulation to find $w_0\in H^2_0(\Omega)$ such that 
\begin{equation*}
\int_\Omega (\Delta + k^2) w_0(\Delta + k^2) \bar \psi \ind x = \int_\Omega (\Delta + k^2) \theta(\Delta + k^2) \bar \psi \ind x
\end{equation*}
for all $\psi\in H^2_0(\Omega)$. By Riesz representation theorem, there exist a unique $l\in H^2_0(\Omega)$ such that the right hand side is equal to $<l,\psi>_{H^2_0(\Omega)}$ and $\left\| l \right\|_{H^2(\Omega)}\leq c\left\|\theta\right\|_{H^2(\Omega)}$. The sesquilinear form on the left is coercive because there exist some positive constant $c$ such that  $c\left\| u  \right\|_{H^2(\Omega)}\leq\left\|\Delta   u \right\|_{L^2(\Omega)}$ for all $u\in H^2_0(\Omega)$ \cite{Grisvardbook}, the uniqueness of the problem \eqref{fourthorderprobelmwellposedeqn1}-\eqref{fourthorderprobelmwellposedeqn3} and Fredholm theory. This allows to prove that 
$
\left\|w_0\right\|_{H^2(\Omega)} \leq  c \|\theta\|_{H^2(\Omega)}
$ for some positive constant $c$
and consequently $\left\|w\right\|_{H^2(\Omega)} \leq  c \|\theta\|_{H^2(\Omega)}$ for some (other) positive constant $c$. This completes the proof.
\end{proof}

Under the assumption that $\partial \Omega$ is $C^{1,1}$, then it follows Lemma \ref{lemma existence 4th order problem} that there exists a unique solution $w_z^{\mathcal{P}} \in H^2(\Omega)$  to \eqref{Section FM and a LSM main of LSM theorem lemma PomegaEz fourthorder eqn1}--\eqref{Section FM and a LSM main of LSM theorem lemma PomegaEz fourthorder eqn3}. This leads to the following Corollary.
\begin{corollary}
 Assume that $k$ is not an interior transmission eigenvalue and that $\partial \Omega$ is $C^{1,1}$.
Suppose that $\{ \mathcal{R}_\alpha\}_{\alpha>0}$ is a family of regularization schemes given by \eqref{Section FM and a LSM R_alpha def}-- \eqref{Section FM and a LSM f_alpha prop} and set $g_{z,\alpha}  = \mathcal{R}_\alpha \phi_z$. Then it holds for any $z\in \Omega$  that
\begin{equation} \label{Section FM and a LSM main of LSM theorem LSM=FM EzP}
\lim_{\alpha \to 0} \langle  g_{z,\alpha}, \phi_z  \rangle_{L^2(\mathbb{S}^{d-1})} = \langle (\mathcal{P}_\Omega \mathcal{T} \mathcal{P}_\Omega)^{-1}   E_z^{\mathcal{P}}, E_z^{\mathcal{P}}  \rangle_{L^2(\Omega)},
\end{equation}
where $E_z^{\mathcal{P}} = -(\Delta + k^2) w_z^{\mathcal{P}}$ in $\Omega$ and $w_z^{\mathcal{P}} \in H^2(\Omega)$ is the unique solution to  \eqref{Section FM and a LSM main of LSM theorem lemma PomegaEz fourthorder eqn1}--\eqref{Section FM and a LSM main of LSM theorem lemma PomegaEz fourthorder eqn3}.    
\end{corollary}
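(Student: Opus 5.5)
The corollary is a direct combination of Theorem \ref{Section parameter LSM theorem EzP} and Lemma \ref{lemma existence 4th order problem}. Theorem \ref{Section parameter LSM theorem EzP} already gives the identity \eqref{Section FM and a LSM main of LSM theorem LSM=FM EzP} with $E_z^{\mathcal{P}}=\mathcal{P}_\Omega E_z = -(\Delta+k^2)w_z^{\mathcal{P}}$. There, $w_z^{\mathcal{P}}$ is some $H^2(\Omega)$ function, namely the volume potential $\int_\Omega \Phi(\cdot,y)E_z^{\mathcal{P}}(y)\ind y$ restricted to $\Omega$, and it satisfies \eqref{Section FM and a LSM main of LSM theorem lemma PomegaEz fourthorder eqn1}--\eqref{Section FM and a LSM main of LSM theorem lemma PomegaEz fourthorder eqn3}. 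The only new content is that, under the $C^{1,1}$ assumption, this boundary value problem has exactly one solution. Consequently $E_z^{\mathcal{P}}$ is computable from $\Omega$ and $z$ alone by solving that problem.

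First I will check that the boundary data are admissible for Lemma \ref{lemma existence 4th order problem}. Fix $z\in\Omega$ and choose $\epsilon>0$ with $\overline{B(z,\epsilon)}\subset\Omega$. The function $\Phi(\cdot,z)$ is smooth (indeed analytic) in a neighborhood of $\partial\Omega$, since $\operatorname{dist}(z,\partial\Omega)\ge\epsilon$. Equivalently, $w_z=\chi_{|\cdot-z|}\Phi(\cdot,z)$ is $C^\infty$ near $\overline{\Omega}$ and coincides with $\Phi(\cdot,z)$ near $\partial\Omega$. Because $\partial\Omega$ is $C^{1,1}$, the trace theorem gives $f:=\Phi(\cdot,z)|_{\partial\Omega}\in H^{3/2}(\partial\Omega)$ and $g:=\partial_\nu\Phi(\cdot,z)|_{\partial\Omega}\in H^{1/2}(\partial\Omega)$. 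The normal is Lipschitz, so the normal derivative of an $H^2$ function lies in $H^{1/2}$.

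Next I apply Lemma \ref{lemma existence 4th order problem}, which yields a unique $w\in H^2(\Omega)$ solving the fourth-order problem with these data. The function $w_z^{\mathcal{P}}$ constructed in Theorem \ref{Section FM and a LSM main of LSM theorem lemma PomegaEz} is another $H^2(\Omega)$ solution, so uniqueness gives $w=w_z^{\mathcal{P}}$. That uniqueness is the integration-by-parts argument in Theorem \ref{Section FM and a LSM main of LSM theorem lemma PomegaEz}, or it can be taken from the lemma itself. Therefore $-(\Delta+k^2)w = E_z^{\mathcal{P}} = \mathcal{P}_\Omega E_z$. Substituting into Theorem \ref{Section parameter LSM theorem EzP} gives the claimed limit.

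The only step needing any care is the regularity of the boundary data, in particular justifying $\partial_\nu\Phi(\cdot,z)\in H^{1/2}(\partial\Omega)$ for a merely $C^{1,1}$ boundary. I will handle it by extending $w_z$ to a smooth compactly supported $H^2(\mathbb{R}^d)$ function and invoking the trace theorem for $H^2$ on $C^{1,1}$ domains \cite{Grisvardbook}. Everything else is a direct citation of earlier results.
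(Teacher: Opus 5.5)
Your proposal is correct and follows the paper's argument: the corollary is Theorem \ref{Section parameter LSM theorem EzP} combined with Lemma \ref{lemma existence 4th order problem}, which under the $C^{1,1}$ assumption makes $w_z^{\mathcal{P}}$ the unique $H^2(\Omega)$ solution of the fourth-order problem. Your explicit check that $\bigl(\Phi(\cdot,z)|_{\partial\Omega},\,\partial_\nu\Phi(\cdot,z)|_{\partial\Omega}\bigr)\in H^{3/2}(\partial\Omega)\times H^{1/2}(\partial\Omega)$ supplies a detail the paper leaves implicit.
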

\begin{remark}
It is possible to impose less regularity assumptions on $(f,g)$. For example, according to \cite{LionsEnrico1961}, for $(f,g) \in H^{-1/2}(\partial \Omega) \times H^{-3/2}(\partial \Omega)$, the following problem has a unique very weak solution $w$ in $L^2(\Omega)$ where
 \begin{eqnarray*}
(\Delta + k^2)(\Delta + k^2) w = 0 &\mbox{ in }& \Omega, \\
w =f&\mbox{ on }& \partial \Omega,\\
\frac{\partial  w }{\partial \nu}=g&\mbox{ on }& \partial \Omega.
\end{eqnarray*}
\end{remark}

\section{Interpretation via the generalized linear sampling method and alternative characterizations} \label{Section GLSM interpretation}

\subsection{Interpretation via the generalized linear sampling method}
There is a deep connection between Theorem \ref{Section parameter LSM theorem EzP} and the generalized linear sampling method \cite{audiberthaddar15}. We first recall the main ingredients of the generalized linear sampling method. Consider the following cost functional 
$$J_\alpha(g, \phi_z ) :=  \alpha \left\| (\mathcal{F^*F})^{1/4}g \right\|^2+ \left\| \mathcal{F}g - \phi_z\right\|^2 $$
and a minimizing sequence $g^*_{z,\alpha}$ such that 
$$
J_\alpha(g^*_{z,\alpha}, \phi_z ) \leq \inf_g J_\alpha(g, \phi_z ) + p(\alpha)
$$
with $p(\alpha)/\alpha\to 0$ when $\alpha\to 0$. {For this general $p(\alpha)$, one can show the existence of the minimizing sequence $g^*_{z,\alpha}$  \cite{audiberthaddar15}.}

Assume that $k$ is not an interior transmission eigenvalue, then there exists a unique solution $(w_z,v_z) \in L^2(\Omega) \times L^2(\Omega)$  \cite{cakoni2016inverse} such that $w_z-v_z \in H^2(\Omega)$  and
\begin{eqnarray}
\Delta w_z + k^2(1+q) w_z =0 &\mbox{ in }& \Omega, \label{SolLSM-ITEP-1}\\ 
\Delta v_z + k^2v_z =0 &\mbox{ in }& \Omega,\label{SolLSM-ITEP-2}\\
w_z - v_z = \Phi(\cdot,z) &\mbox{ on }& \partial \Omega, \label{SolLSM-ITEP-3}\\
\frac{\partial w_z}{\partial \nu} - \frac{\partial v_z  }{\partial \nu} = \frac{\partial \Phi(\cdot,z) }{\partial \nu} &\mbox{ on }& \partial \Omega. \label{SolLSM-ITEP-4}
\end{eqnarray}
To discuss several properties of the generalized linear sampling method, recall the operators $\mathcal{H}^*$ and $\mathcal{T}$ given by \eqref{Section operator S*_omega def} and \eqref{Section operator T_omega def}, respectively,  we introduce the operator $\mathcal{G} = \mathcal{H}^* \mathcal{T}$, then it follows that $\mathcal{G} $ is compact, $\mathcal{F}$ has dense range, and $ \left|  \langle  (\mathcal{F^*F})^{1/2}g,g  \rangle  \right| \geq  \mu \left\|  \mathcal{H} g \right\|^2$ for some positive constant $\mu$, e.g. \cite{audiberthaddar15}. 
Moreover, we can see from \eqref{SolLSM-ITEP-1}--\eqref{SolLSM-ITEP-4} that the incident field $v_z$ can be seen as an incident field which induces a scattered field $\Phi(\cdot, z)$ outside $\Omega$   and a total field $w_z$ inside $\Omega$; this allows to conclude that $\mathcal{G} v_z = \phi_z$ since the operator $\mathcal{G}$ maps an incident field to the corresponding far-field. From \eqref{Section operator T_omega def} we can directly deduce that  $\mathcal{T}v_z=k^2qw_z$, which implies $\mathcal{H}^*(k^2qw_z) = \phi_z$. Note that $ \mathcal{P}_\Omega$ is the projection onto $\overline{R(\mathcal{H})}$ and  $\mathcal{H}^*E^{\mathcal{P}}_z = \phi_z$ we deduce as explained in remark \ref{remarkinjectivityP} that $ E_z^{\mathcal{P}} = \mathcal{P}_\Omega (k^2qw_z)$.

Now we are ready to recall the following result  from \cite{audiberthaddar15}. We also refer to \cite[Section 7.2]{kirsch2008factorization}.
\begin{theorem}\label{Section generalized linear sampling method theorem}
Let $\mathcal{F}$,  $\mathcal{H}$, and $\mathcal{T}$ be given by \eqref{Section operator N def}, \eqref{Section operator S_omega def}, and \eqref{Section operator T_omega def}, respectively.  
Then the following properties hold.
\begin{itemize}
\item  If $z \in \mathbb{R}^d \backslash \overline{\Omega}$, then $\limsup_{\alpha\to 0}\left|  \langle  (\mathcal{F^*F})^{1/2}g^*_{z,\alpha},g^*_{z,\alpha}  \rangle  \right| = \infty$.
\item  If $z \in  \Omega$, then $\limsup_{\alpha\to 0}\left|  \langle  (\mathcal{F^*F})^{1/2}g^*_{z,\alpha},g^*_{z,\alpha}  \rangle  \right| \leq \infty$.
\item  Moreover $\mathcal{H} g^*_{z,\alpha}$ strongly converge in $L^2(\Omega)$ to $v_z$ in $Y_\Omega$.
\end{itemize}
As a direct consequence when $z \in \Omega$, it follows that 
 \begin{equation*}
\lim_{\alpha \to 0} \langle  g^*_{z,\alpha}, \phi_z  \rangle_{L^2(\mathbb{S}^{d-1})} = \langle v_z,\mathcal{P}_\Omega \mathcal{T} \mathcal{P}_\Omega v_z \rangle_{L^2(\Omega)}= \langle v_z,k^2qw_z \rangle_{L^2(\Omega)}.
\end{equation*}
\end{theorem}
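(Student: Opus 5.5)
The first three items are the known results of the generalized linear sampling method \cite{audiberthaddar15}. The plan is to invoke them, verifying the abstract hypotheses in our setting. Then we derive the limit formula as the new consequence.

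\textbf{The GLSM hypotheses.} The abstract GLSM theory for a factorization $\mathcal{F}=\mathcal{G}\mathcal{H}$ with $\mathcal{G}=\mathcal{H}^*\mathcal{T}$ requires four ingredients:
\begin{itemize}
\item[(i)] $\mathcal{G}$ is compact;
\item[(ii)] $\mathcal{F}$ has dense range;
\item[(iii)] the coercivity estimate $|\langle(\mathcal{F}^*\mathcal{F})^{1/2}g,g\rangle|\ge \mu\|\mathcal{H}g\|^2$ holds;
\item[(iv)] $\phi_z\in R(\mathcal{G})$ if and only if $z\in\Omega$.
\end{itemize}
Items (i)--(iii) are recorded just before the theorem. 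For (iii) we use normality of $\mathcal{F}$ together with Lemma \ref{Section operator mu/|mu| phase interval}. This gives $|\langle(\mathcal{F}^*\mathcal{F})^{1/2}g,g\rangle|=\sum|\mu_n||\langle g,\zeta_n\rangle|^2\ge c|\langle\mathcal{F}g,g\rangle|$. By the factorization \eqref{Section operator Omega fac coercive factorization} and the coercivity \eqref{Section operator middle positive definite prop eqn}, the right side is bounded below by $cT_{\inf}\|\mathcal{H}g\|^2$.

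For (iv), we have $\mathcal{G}v_z=\phi_z$ with $v_z\in Y_\Omega$ from the interior transmission problem when $z\in\Omega$. For $z\notin\overline\Omega$, Lemma \ref{Section FM and a LSM lemma phiz S Omega} gives $\phi_z\notin R(\mathcal{H}^*)\supset R(\mathcal{G})$. With these, the first two bullets follow directly from the abstract GLSM theorem. The quantity $\langle(\mathcal{F}^*\mathcal{F})^{1/2}g^*_{z,\alpha},g^*_{z,\alpha}\rangle$ stays bounded if and only if $\phi_z\in R(\mathcal{G})$.

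\textbf{Strong convergence of $\mathcal{H}g^*_{z,\alpha}$.} By (iii) and the boundedness just established, $\mathcal{H}g^*_{z,\alpha}$ is bounded in $L^2(\Omega)$. Since $J_\alpha(g^*_{z,\alpha})\le J_\alpha(g)+p(\alpha)$, we also have $\mathcal{F}g^*_{z,\alpha}\to\phi_z$.

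Take a weak limit point $h\in Y_\Omega$. Then $\mathcal{H}^*\mathcal{T}\mathcal{P}_\Omega h=\phi_z=\mathcal{H}^*\mathcal{T}v_z$. Injectivity of $\mathcal{P}_\Omega\mathcal{T}\mathcal{P}_\Omega$ on $Y_\Omega$, combined with injectivity of $\mathcal{H}^*$ on $Y_\Omega$, gives $h=v_z$. Hence the whole sequence converges weakly to $v_z$.

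Upgrading to strong convergence is the main obstacle. Following \cite{audiberthaddar15}, I would compare $J_\alpha(g^*_{z,\alpha})$ with $J_\alpha(g_\alpha)$, where $g_\alpha$ is a sequence with $\mathcal{H}g_\alpha\to v_z$ strongly, available by density of $R(\mathcal{H})$ in $Y_\Omega$. Dividing by $\alpha$ and using $p(\alpha)/\alpha\to0$ gives $\limsup\|(\mathcal{F}^*\mathcal{F})^{1/4}g^*_{z,\alpha}\|^2\le\langle\mathcal{P}_\Omega\mathcal{T}\mathcal{P}_\Omega v_z,v_z\rangle$ in modulus. Then I would use the decomposition $\mathcal{T}=\mathcal{T}_b+\mathcal{C}$ from Proposition \ref{Section operator middle positive definite prop}. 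Compactness of $\mathcal{C}$ turns weak into strong convergence for the $\mathcal{C}$ part. The coercive $\mathcal{T}_b$ part, with the $\limsup$ bound, then forces convergence of norms and hence strong convergence.

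\textbf{The limit formula.} Write $\langle g^*_{z,\alpha},\phi_z\rangle=\langle g^*_{z,\alpha},\mathcal{H}^*\mathcal{T}v_z\rangle=\langle\mathcal{H}g^*_{z,\alpha},\mathcal{T}v_z\rangle_{L^2(\Omega)}$. By strong convergence this tends to $\langle v_z,\mathcal{T}v_z\rangle_{L^2(\Omega)}$. Since $v_z\in Y_\Omega$, we have $\mathcal{P}_\Omega v_z=v_z$, so this equals $\langle v_z,\mathcal{P}_\Omega\mathcal{T}\mathcal{P}_\Omega v_z\rangle_{L^2(\Omega)}$. Finally $\mathcal{T}v_z=k^2qw_z$, which gives the last expression.
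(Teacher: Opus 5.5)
Your reduction of the first two bullets to the abstract GLSM theorem, your identification of the weak limit $\mathcal{H}g^*_{z,\alpha}\rightharpoonup v_z$, and your final computation are correct. The limit formula in fact needs only this weak convergence, because $\langle g^*_{z,\alpha},\phi_z\rangle=\langle\mathcal{H}g^*_{z,\alpha},\mathcal{T}v_z\rangle_{L^2(\Omega)}$ is linear in $\mathcal{H}g^*_{z,\alpha}$. This is cleaner than the paper's route through $\lim\langle\mathcal{F}g^*_{z,\alpha},g^*_{z,\alpha}\rangle$ and $\mathcal{F}g^*_{z,\alpha}\to\phi_z$. Since $g^*_{z,\alpha}$ itself is unbounded, the paper's route genuinely relies on the strong convergence quoted from \cite{audiberthaddar15}.

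The gap is in your own sketch of that strong convergence (the third bullet). The regularization term is $\|(\mathcal{F}^*\mathcal{F})^{1/4}g\|^2=\sum_n|\mu_n||\langle g,\zeta_n\rangle|^2$. This is not $|\langle\mathcal{F}g,g\rangle|=|\langle\mathcal{T}\mathcal{H}g,\mathcal{H}g\rangle_{L^2(\Omega)}|$; it only dominates it, strictly whenever $g$ charges eigenvectors whose eigenvalues have different phases. So comparing with $g_\alpha$ cannot give $\limsup\|(\mathcal{F}^*\mathcal{F})^{1/4}g^*_{z,\alpha}\|^2\le|\langle\mathcal{P}_\Omega\mathcal{T}\mathcal{P}_\Omega v_z,v_z\rangle|$. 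That bound does not hold in general: once strong convergence is known, the left side equals $\langle\mathcal{T}_\sharp v_z,v_z\rangle$ (with $\mathcal{T}_\sharp$ as below), which is $\ge|\langle\mathcal{T}v_z,v_z\rangle|$ and typically strictly larger.

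Even a bound on $\limsup|\langle\mathcal{T}h_\alpha,h_\alpha\rangle|$ with $h_\alpha\rightharpoonup v_z$ would not close via $\mathcal{T}=\mathcal{T}_b+\mathcal{C}$. Write $a_\alpha=\langle\mathcal{T}_bh_\alpha,h_\alpha\rangle$ (real) and $\langle\mathcal{C}h_\alpha,h_\alpha\rangle\to c$. Then $\limsup|a_\alpha+c|\le|a+c|$ forces $a_\alpha\to a$ only when $a+\mathrm{Re}\,c\ge 0$. But $\mathrm{Re}\langle\mathcal{T}v,v\rangle$ can be negative on $Y_\Omega$: for $\varphi_n=|\mu_n|^{-1/2}\mathcal{H}\zeta_n$ one has $\langle\mathcal{T}\varphi_n,\varphi_n\rangle=e^{i\eta_n}$, and $\eta_n$ may exceed $\pi/2$.

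What is missing is a factorization of $(\mathcal{F}^*\mathcal{F})^{1/2}$ itself through $\mathcal{H}$. Lemma \ref{Section operator mu/|mu| phase interval} is needed for the reverse of the inequality you wrote in (iii), which holds trivially with $c=1$:
$\sin(\eta_\delta)\langle(\mathcal{F}^*\mathcal{F})^{1/2}g,g\rangle\le\mathrm{Re}\big(e^{-i(\pi/2-\eta_\delta)}\langle\mathcal{F}g,g\rangle\big)\le\|\mathcal{T}\|\,\|\mathcal{H}g\|^2_{L^2(\Omega)}$.

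Apply Cauchy--Schwarz to the nonnegative Hermitian form $\langle(\mathcal{F}^*\mathcal{F})^{1/2}g,h\rangle$. This shows the form depends only, and boundedly, on $(\mathcal{H}g,\mathcal{H}h)$. Hence $(\mathcal{F}^*\mathcal{F})^{1/2}=\mathcal{H}^*\mathcal{T}_\sharp\mathcal{H}$, with $\mathcal{T}_\sharp$ bounded and self-adjoint on $Y_\Omega$, and coercive by (iii).

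Now choose $g_\alpha$ with $\|\mathcal{H}g_\alpha-v_z\|_{L^2(\Omega)}\le\alpha$. A rate is needed so that $\|\mathcal{F}g_\alpha-\phi_z\|^2/\alpha\to 0$; mere convergence $\mathcal{H}g_\alpha\to v_z$ is not enough. Compare $J_\alpha(g^*_{z,\alpha},\phi_z)\le J_\alpha(g_\alpha,\phi_z)+p(\alpha)$ and divide by $\alpha$. This gives $\limsup\langle\mathcal{T}_\sharp\mathcal{H}g^*_{z,\alpha},\mathcal{H}g^*_{z,\alpha}\rangle\le\langle\mathcal{T}_\sharp v_z,v_z\rangle$. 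Since $\langle\mathcal{T}_\sharp\cdot,\cdot\rangle^{1/2}$ is an equivalent Hilbert norm on $Y_\Omega$, weak convergence plus this norm bound yields strong convergence. This is the argument of \cite{audiberthaddar15}.
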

\begin{proof}
Following \cite{audiberthaddar15}, $v_z \in Y_\Omega = \overline{R(\mathcal{H})}$ and that $\mathcal{P}_\Omega$ is the projection onto $Y_\Omega$, we have that
 \begin{equation*}
\lim_{\alpha \to 0} \langle  \mathcal{F} g^*_{z,\alpha}, g^*_{z,\alpha}  \rangle_{L^2(\mathbb{S}^{d-1})} = \langle v_z,\mathcal{P}_\Omega \mathcal{T} \mathcal{P}_\Omega v_z \rangle_{L^2(\Omega)}= \langle v_z,k^2qw_z \rangle_{L^2(\Omega)}.
\end{equation*}
Note the strong convergence of $\mathcal{F} g^*_{z,\alpha}$ to $\phi_z$, this proves the theorem.
\end{proof}

Now with the help of the  generalized linear sampling method, we provide a shorter, less constructive proof of Theorem \ref{Section parameter LSM theorem EzP} as follows.

\begin{proof}[Alternate Proof of Theorem \ref{Section parameter LSM theorem EzP}  ]
From lemma \ref{Section parameter LSM lemma} we know that $\mathcal{H}  g_{z,\alpha}$ is bounded as $\alpha$ goes to $0$. Therefore we can conclude that there is a weakly convergent subsequence. Moreover we know that $\mathcal{F}  g_{z,\alpha}$ strongly converges because $\mathcal{F}$ is a compact operator. Since $\mathcal{F}$ has dense range and due to classical regularization theory results, if it converges $\mathcal{F}g_{z,\alpha}$ has to converge to $\phi_z $. Therefore because of the injectivity of $ \mathcal{G}$ and the fact that $ \mathcal{G}v_z =\phi_z $, there is only one weak limit $v_z$ for the sequence $\mathcal{H}  g_{z,\alpha}$ as $\alpha$ goes to zero. Now one can derive that, as $\alpha\to 0$,
$$
\langle   g_{z,\alpha}, \phi_z  \rangle_{L^2(\mathbb{S}^{d-1})} =\langle  g_{z,\alpha},\mathcal{H}^*(k^2 q w_z) \rangle_{L^2(\mathbb{S}^{d-1})} = \langle  \mathcal{H} g_{z,\alpha}, k^2 q w_z  \rangle_{L^2(\Omega)}\to \langle v_z,k^2qw_z \rangle_{L^2(\Omega)}.
$$

The final part is to relate the right hand side of the above equality to $ E_z^{\mathcal{P}}$. To do that we recall first that $E_z^{\mathcal{P}} =\mathcal{P}_\Omega(k^2qw_z)$ and $\mathcal{T}v_z=k^2qw_z$, this allows to show that $\phi_z=\mathcal{H}^*E^\mathcal{P}_z = \mathcal{H}^*\mathcal{P}_\Omega\mathcal{T}\mathcal{P}_\Omega v_z  $. From the injectivity of $\mathcal{H}^*\mathcal{P}_\Omega$, we have that $E^\mathcal{P}_z =\mathcal{P}_\Omega\mathcal{T}\mathcal{P}_\Omega v_z$.
This proves that $\langle v_z,k^2qw_z \rangle_{L^2(\Omega)}$ is equal to $ \langle (\mathcal{P}_\Omega \mathcal{T} \mathcal{P}_\Omega)^{-1}   E_z^{\mathcal{P}}, E_z^{\mathcal{P}}  \rangle_{L^2(\Omega)}$ and completes the proof.
\end{proof}

\subsection{Alternative characterization of the indicator}
There are alternative ways to characterize the indicator by assuming that the background medium is $1+q$. More precisely, we first introduce the Green function for the medium  $1+q$ denoted  $\Phi_q (x,y)$ which is the radiating solution to 
$$
\Delta_x \Phi_q (\cdot,y) +k^2(1+q)\Phi_q (\cdot,y)  = \delta_y.
$$
Let $\Phi_q^\infty(\hat{x},y)$ be the far-field of $\Phi_q (\cdot,y)$.
In this way, the scattered field $u^s$ in \eqref{medium us eqn1+2} can also be given by
$$
u^\infty(\hat x)=\mathcal{H}^*_q(k^2qf) (\hat x) =  k^2 \int_\Omega \Phi_q^\infty(\hat x,y)q(y)   f(y) \ind y,
$$
where $\mathcal{H}^*_q: L^2(\Omega) \to  L^2(\mathbb{S}^{d-1})$ is defined by
$$
(\mathcal{H}^*_qh)(x) = \int_\Omega \Phi_q^\infty(\hat x,y)h(y) \ind y, \forall \quad h \in L^2(\Omega).
$$
Similarly to $\mathcal{H}^*$ given by \eqref{Section operator S*_omega def},  $\mathcal{H}^*_q$ is also not injective;   we can similarly introduce the projector $P_{q,\Omega}$ which projects a given function to the function space $Y_{q,\Omega}:=\{v\in L^2(\Omega): \ \Delta v+k^2(1+q)v=0\}$. The operator $\mathcal{H}^*_q P_{q,\Omega}$ is then injective and we can introduce the unique $E^\mathcal{P}_{q,z}$ given by $\mathcal{H}^*_q P_{q,\Omega}E^\mathcal{P}_{q,z} = \phi_z$. Again we have that  $E^\mathcal{P}_{q,z} =\Delta f_{q,z} +k^2(1+q)f_{q,z} $ where the function $f_{q,z}  \in L^2_\Delta(\Omega)$ is the unique solution to
$$
\left\{
\begin{array}{rcl}
(\Delta + k^2(1+q))(\Delta + k^2(1+q)) f_{q,z}  = 0 &\mbox{ in }& \Omega, \\
f_{q,z} =\Phi(\cdot,z)&\mbox{ on }& \partial \Omega,\\
\frac{\partial  f_{q,z} }{\partial \nu}=\frac{\partial\Phi(\cdot,z)}{\partial \nu}&\mbox{ on }& \partial \Omega.
\end{array}
\right.
$$
In the next theorem we give the relation between $E^\mathcal{P}_z$ and $E^\mathcal{P}_{q,z}$  and  obtain several equivalent representations of $\lim_{\alpha \to 0}\langle  g_{z,\alpha}, \phi_z  \rangle_{L^2(\mathbb{S}^{d-1})} 
$. 
\begin{theorem} \label{Section alternative characterization theorem}
If $k$ is not an interior transmission eigenvalue, let $(w_z,v_z)$ be given by the interior transmission problem \eqref{SolLSM-ITEP-1}-\eqref{SolLSM-ITEP-4}. Let $\mathcal{T}_{b}: L^2(\Omega) \to L^2(\Omega)$ be given by \eqref{Section operator T_omega def Born}.
It holds that 
\begin{equation}\label{eqn: E^P_z and E^P_qz}
 E^\mathcal{P}_{z}=  (P_{\Omega}\mathcal{T}P_{\Omega}) ( P_{q,\Omega} \mathcal{T}_b P_\Omega)^{-1}  E^\mathcal{P}_{q,z}.   
\end{equation}
As a consequence when $z \in \Omega$, we obtain the following equivalent representations of the imaging indicator $\lim_{\alpha \to 0}\langle  g_{z,\alpha}, \phi_z  \rangle_{L^2(\mathbb{S}^{d-1})} 
$ given by
\begin{itemize}
\item $ \langle  \mathcal{T} v_z,v_z\rangle = \langle E^\mathcal{P}_z,  ( P_{q,\Omega} \mathcal{T}_b P_\Omega)^{-1} E^\mathcal{P}_{q,z}\rangle$,
\item and  $ \langle  \mathcal{T} v_z,v_z\rangle =\langle k^2qw_z,v_z\rangle= \langle (P_{q,\Omega} \mathcal{T}_b P_{\Omega})^{-1}(P_{\Omega}\mathcal{T}P_{\Omega})(P_{q,\Omega}\mathcal{T}_bP_{\Omega})^{-1}E^\mathcal{P}_{q,z},E^\mathcal{P}_{q,z}\rangle$.
\end{itemize}
\end{theorem}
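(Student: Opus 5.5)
The plan is to pass through the interior transmission solution $(w_z,v_z)$ of \eqref{SolLSM-ITEP-1}--\eqref{SolLSM-ITEP-4} and express both $E^\mathcal{P}_z$ and $E^\mathcal{P}_{q,z}$ as images of the single function $v_z\in Y_\Omega$. One half is already available. In the alternate proof of Theorem \ref{Section parameter LSM theorem EzP} we showed, from $\mathcal{T}v_z=k^2qw_z$, $\mathcal{H}^*(k^2qw_z)=\phi_z$ and the injectivity of $\mathcal{H}^*\mathcal{P}_\Omega$, that
\[
E^\mathcal{P}_z=P_\Omega\mathcal{T}P_\Omega v_z .
\]
So \eqref{eqn: E^P_z and E^P_qz} will follow once I prove two things: first, $E^\mathcal{P}_{q,z}=P_{q,\Omega}\mathcal{T}_bP_\Omega v_z=P_{q,\Omega}(k^2qv_z)$; second, $P_{q,\Omega}\mathcal{T}_bP_\Omega: Y_\Omega\to Y_{q,\Omega}$ is invertible.

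For the first identity I would mirror the argument that gave $\mathcal{H}^*(k^2qw_z)=\phi_z$, but now in the background $1+q$. Define $U:=w_z-v_z$ in $\Omega$ and $U:=\Phi(\cdot,z)$ in $\mathbb{R}^d\setminus\overline\Omega$.
\begin{itemize}
\item Since $w_z-v_z\in H^2(\Omega)$, the Cauchy-data conditions \eqref{SolLSM-ITEP-3}--\eqref{SolLSM-ITEP-4} make $U\in H^2_{loc}$ across $\partial\Omega$.
\item Because $z\in\Omega$, $U$ is smooth and radiating outside $\Omega$.
\item Inside $\Omega$ we have $(\Delta+k^2(1+q))v_z=k^2qv_z$, hence $(\Delta+k^2(1+q))U=-k^2qv_z$.
\end{itemize}
By uniqueness of radiating solutions in the medium $1+q$, $U$ is therefore the volume potential of $k^2qv_z$ against $\Phi_q$. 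Its far field is that of $\Phi(\cdot,z)$, namely $\phi_z$, so $\mathcal{H}^*_q(k^2qv_z)=\phi_z$. I need to track the sign and normalization conventions (the paper's $\Phi_q$ solves the equation with $+\delta_y$, while $\Phi$ is the outgoing fundamental solution) so that they match those used in defining $E^\mathcal{P}_{q,z}$. Since $\mathcal{H}_q^*(I-P_{q,\Omega})=0$ and $\mathcal{H}^*_qP_{q,\Omega}$ is injective, exactly as in Remark \ref{remarkinjectivityP}, I conclude $E^\mathcal{P}_{q,z}=P_{q,\Omega}(k^2qv_z)$. Because $v_z\in Y_\Omega$, this equals $P_{q,\Omega}\mathcal{T}_bP_\Omega v_z$.

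The main obstacle is the invertibility of $P_{q,\Omega}\mathcal{T}_bP_\Omega$ from $Y_\Omega$ to $Y_{q,\Omega}$.

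Injectivity should come from the assumption that $k$ is not an interior transmission eigenvalue. Suppose $v\in Y_\Omega$ and $P_{q,\Omega}(k^2qv)=0$. Then $k^2qv\in Y_{q,\Omega}^\perp$, and I would identify this orthogonal complement with the range of $(\Delta+k^2(1+q))$ on $H^2_0(\Omega)$, a closed range by the Lemma \ref{lemma existence 4th order problem}-type coercivity. This gives $k^2qv=(\Delta+k^2(1+q))u$ for some $u\in H^2_0(\Omega)$. Then $w:=v-u$ satisfies $(\Delta+k^2(1+q))w=k^2qv-k^2qv=0$, and $w-v=-u\in H^2_0(\Omega)$. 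So $(w,v)$ is an interior transmission eigenpair, which forces $v=0$.

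For surjectivity, I would write $P_{q,\Omega}\mathcal{T}_bP_\Omega$ as a coercive-type operator plus a compact perturbation, using $q\ge q_{\rm inf}>0$ and the fact that $Y_\Omega$ and $Y_{q,\Omega}$ differ by a compact perturbation. Then Fredholm alternative together with injectivity (and the analogous injectivity for the adjoint $P_\Omega\mathcal{T}_bP_{q,\Omega}$) gives bijectivity. If this Fredholm step turns out to be delicate, it suffices to have the inverse on the single element $E^\mathcal{P}_{q,z}$, which lies in the range by construction.

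With $v_z=(P_{q,\Omega}\mathcal{T}_bP_\Omega)^{-1}E^\mathcal{P}_{q,z}$ in hand, \eqref{eqn: E^P_z and E^P_qz} follows at once. For the two representations, Theorem \ref{Section generalized linear sampling method theorem} gives
\[
\lim_{\alpha\to0}\langle g_{z,\alpha},\phi_z\rangle=\langle \mathcal{T}v_z,v_z\rangle=\langle k^2qw_z,v_z\rangle .
\]
Writing $\langle\mathcal{T}v_z,v_z\rangle=\langle P_\Omega\mathcal{T}P_\Omega v_z,v_z\rangle=\langle E^\mathcal{P}_z,v_z\rangle$ and substituting for $v_z$ gives the first bullet. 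Substituting $v_z$ into both slots of $\langle P_\Omega\mathcal{T}P_\Omega v_z,v_z\rangle$ gives the second bullet.
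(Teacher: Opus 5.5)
Your skeleton matches the paper's. Everything is reduced to three facts:
\begin{itemize}
\item $E^{\mathcal P}_z=\mathcal{P}_\Omega\mathcal{T}\mathcal{P}_\Omega v_z$;
\item $E^{\mathcal P}_{q,z}=P_{q,\Omega}\mathcal{T}_b\mathcal{P}_\Omega v_z$;
\item invertibility of $P_{q,\Omega}\mathcal{T}_b\mathcal{P}_\Omega:Y_\Omega\to Y_{q,\Omega}$.
\end{itemize}
The paper only asserts the second fact. Your gluing of $w_z-v_z$ with $\Phi(\cdot,z)$ actually proves it. Your caution about signs is warranted: the paper's $+\delta_y$ convention for $\Phi_q$ is inconsistent with its formula $u^\infty=\mathcal{H}_q^*(k^2qf)$.

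For injectivity the paper argues from outside $\Omega$. It uses $\mathcal{H}_q^*(k^2qf)=0$, Rellich's lemma kills the $\Phi_q$-potential $g$ of $k^2qf$ outside $\Omega$, and $(f+g,f)$ is then a transmission eigenpair. You argue from inside via $Y_{q,\Omega}^\perp=(\Delta+k^2(1+q))H^2_0(\Omega)$. That identification is correct:
\begin{itemize}
\item $Y_{q,\Omega}$ is the kernel of the maximal operator, which is the adjoint of the minimal operator with domain $H^2_0(\Omega)$;
\item the minimal operator has closed range, because $\Delta$ is bounded below on $H^2_0(\Omega)$ for any bounded $\Omega$ and the zero-order term is compact.
\end{itemize}
This route keeps the invertibility argument free of far fields.

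Surjectivity is where your proposal is not yet a proof. The Fredholm route rests on compactness of $\mathcal{P}_\Omega-P_{q,\Omega}$, which you only assert. The claim is true. On $H^2_0(\Omega)$ one has $P_{q,\Omega}(\Delta+k^2)=-P_{q,\Omega}k^2q$, which is compact. Hence $P_{q,\Omega}(I-\mathcal{P}_\Omega)$ is compact, and symmetrically so is $\mathcal{P}_\Omega(I-P_{q,\Omega})$.

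The detour is unnecessary, though. The paper gets surjectivity from the same well-posedness of the interior transmission problem that defines $(w_z,v_z)$. This is just your step-two gluing with a general right-hand side:
\begin{itemize}
\item Given $h\in Y_{q,\Omega}$, take $u\in H^2(\Omega)$ with $(\Delta+k^2(1+q))u=-h$, e.g.\ the volume potential the paper uses.
\item Solve the transmission problem with Cauchy data $(u,\partial_\nu u)$.
\item Then $w-v-u\in H^2_0(\Omega)$ and $(\Delta+k^2(1+q))(w-v-u)=h-k^2qv$.
\item By your own description of $Y_{q,\Omega}^\perp$, this gives $P_{q,\Omega}(k^2qv)=h$.
\end{itemize}
Your fallback, inverting only at $E^{\mathcal P}_{q,z}$, covers the main identity and the first bullet. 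It does not cover the second bullet.

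The reason is this. Moving $(P_{q,\Omega}\mathcal{T}_b\mathcal{P}_\Omega)^{-1}$ out of the second slot of $\langle\mathcal{P}_\Omega\mathcal{T}\mathcal{P}_\Omega v_z,v_z\rangle$ produces its adjoint $(\mathcal{P}_\Omega\mathcal{T}_bP_{q,\Omega})^{-1}$ as the leftmost factor. That requires a bounded inverse. As literally written, the statement applies $(P_{q,\Omega}\mathcal{T}_b\mathcal{P}_\Omega)^{-1}$ to $E^{\mathcal P}_z\in Y_\Omega$. That element lies outside its domain, since $Y_\Omega\cap Y_{q,\Omega}=\{0\}$ when $q>0$.

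There is also a citation slip. Theorem~\ref{Section generalized linear sampling method theorem} concerns the GLSM sequence $g^*_{z,\alpha}$, not $g_{z,\alpha}$. The right input is Theorem~\ref{Section parameter LSM theorem EzP} together with $E^{\mathcal P}_z=\mathcal{P}_\Omega\mathcal{T}\mathcal{P}_\Omega v_z$. That yields $\langle v_z,\mathcal{T}v_z\rangle$, the complex conjugate of $\langle\mathcal{T}v_z,v_z\rangle$. These are genuinely different, since $\Im\langle\mathcal{T}v_z,v_z\rangle>0$.

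Both of these slips are already in the statement. Your substitution step inherits them rather than catching them.
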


\begin{proof}
First we should prove that $( P_{q,\Omega} \mathcal{T}_b P_\Omega)^{-1} $ is well defined for $( P_{q,\Omega} \mathcal{T}_b P_\Omega) $ as an operator from $Y_\Omega$ to $Y_{q,\Omega}$. 
First for any $f\in Y_\Omega$ such that $( P_{q,\Omega} \mathcal{T}_b P_\Omega) f =0$, we will show that $f$ is trivial:  $\mathcal{H}^*_q P_{q,\Omega} \mathcal{T}_b P_\Omega f =0$ yields that $\mathcal{H}^*_q \mathcal{T}_b f =0$. Let $ g(x) = \int_\Omega \Phi_q(x,y)(k^2qb)(y) \ind y$, then it follows from $\mathcal{H}^*_q \mathcal{T}_b f = 0$ that the far-field $g^\infty=0$ and then $g(x) \equiv 0$ in $\mathbb{R}^d \backslash \overline{\Omega}$, this implies that $(f+g,f)$ satisfies the interior transmission problem in Definition \ref{Section operator ITE def}, then $f$ vanishes since $k$ is not an interior transmission eigenvalue. This shows that $P_{q,\Omega} \mathcal{T}_b P_\Omega$ is injective.
Second for any $h \in Y_{q,\Omega}$, we will show that there exist some $v \in Y_\Omega$ such that $P_{q,\Omega} \mathcal{T}_b P_\Omega v=h$: let $u(x) = \int_\Omega \Phi_q(x,y)h(y) \ind y$ for $x \in \mathbb{R}^d$; since $k$ is not an interior transmission eigenvalue, then there exist a unique solution $(w,v)\in Y_{q,\Omega}\times Y_{\Omega}$ that solves the interior transmission problem
\begin{eqnarray*}
\Delta w + k^2(1+q) w =0 &\mbox{ in }& \Omega,\\
\Delta v + k^2v =0 &\mbox{ in }& \Omega,\\
w - v = u &\mbox{ on }& \Omega,\\
\frac{\partial w}{\partial \nu} - \frac{\partial v}{\partial \nu} = \frac{\partial u}{\partial \nu} &\mbox{ on }& \partial \Omega,
\end{eqnarray*}
and one can understand above interior transmission problem by that the incident field $v$ induces scattering wave field $u$ and total wave field $w$, and as a result one can deduce that $u(x) = \int_\Omega \Phi_q(x,y)k^2q(y)v(y) \ind y$ for $x \in \mathbb{R}^d$.
This implies that $h = k^2 q v$, i.e., $P_{q,\Omega} \mathcal{T}_b P_\Omega v =h$. This demonstrates that $( P_{q,\Omega} \mathcal{T}_b P_\Omega)^{-1} $ is well defined.

To prove \eqref{eqn: E^P_z and E^P_qz}, recall that  $(w_z,v_z)$ satisfies the interior transmission problem \eqref{SolLSM-ITEP-1}-\eqref{SolLSM-ITEP-4}, then $v_z$ can be seen as an incident wave field which induces the far-field $\phi_z$, i.e., $\phi_z = \mathcal{G} v_z = \mathcal{H}^* \mathcal{P}_\Omega \mathcal{T} \mathcal{P}_\Omega v_z$, note that $\mathcal{H}^*\mathcal{P}_\Omega E_z^\mathcal{P} = \phi_z$ from Remark \ref{remarkinjectivityP}, one can derive that $\mathcal{P}_\Omega\mathcal{T}\mathcal{P}_\Omega v_z= E_z^\mathcal{P}$. Note on the other hand that $E_{q,z} = \mathcal{P}_{q,\Omega}\mathcal{T}_b\mathcal{P}_\Omega v_z$, this allows to prove \eqref{eqn: E^P_z and E^P_qz}.

The equivalent representations of $\lim_{\alpha \to 0}\langle  g_{z,\alpha}, \phi_z  \rangle_{L^2(\mathbb{S}^{d-1})} 
$ can be directly derived using \eqref{Section FM and a LSM main of LSM theorem LSM=FM EzP} and \eqref{eqn: E^P_z and E^P_qz}. This completes the proof.
\end{proof}
It is noted that the above formulation of the indicator function   depends on the simple linear operator $\mathcal{T}_b$   but involves $P_{q,\Omega} \mathcal{T}$ and $E^\mathcal{P}_{q,z}$ which depend on the unknown $q$ more implicitly.  The nonlinear information in Theorem \ref{Section parameter LSM theorem EzP} is more explicit.

\section{Analytic examples} \label{Section analysis inside}

The goal of this section is to provide analytic examples    to shed light on the new interpretation of the linear sampling indicator.

\subsection{The Born case}

In the Born approximation regime, the Born approximation $u^s_b \in H^1_{loc}(\mathbb{R}^d)$ is the unique radiating solution  to
\begin{equation} \label{Born medium us eqn1+2}
\Delta u^s_b + k^2  u^s_b = -k^2 q f,
\end{equation}
where $f(x)=e^{ikx\cdot \hat{\theta}}$. The forward scattering problem \eqref{Born medium us eqn1+2} obviously has a unique radiating solution. Similarly this gives the Born far-field pattern $u^{\infty}_b(\hat{x};\hat{\theta};k)$ according to
\begin{equation*} 
u^{s}_b(x;\hat{\theta};k)
=
\left\{
\begin{array}{cc}
\frac{e^{i\frac{\pi}{4}}}{\sqrt{8k\pi}} \frac{e^{ikr}}{\sqrt{r}}\left\{u^{\infty}_b(\hat{x};\hat{\theta};k)+\mathcal{O}\left(\frac{1}{r}\right)\right\}  & d=2        \\
 \frac{e^{ik |x|}}{4\pi |x|} \left\{ u^\infty_b (\hat{x};\hat{\theta};k) +     \mathcal{O}\left(\frac{1}{r}\right) \right\}& d=3
\end{array}
\right.
 \quad\mbox{as }\,r=|x|\rightarrow\infty,
\end{equation*}
uniformly with respect to all directions $\hat{x}:=x/|x|\in\mathbb{S}^{d-1}$.

If we introduce a Born far-field operator $\mathcal{F}_b: L^2(\mathbb{S}^{d-1}) \to L^2(\mathbb{S}^{d-1})$  by 
\begin{equation} \label{Section operator N def Born}
\left( \mathcal{F}_b g \right)(\hat{x}) := \int_{\mathbb{S}^{d-1}} u^{\infty}_b(\hat{x};\hat{\theta};k) g(\hat{\theta}) \ind s(\hat{\theta}),
\end{equation}
then we immediately arrive at the factorization $\mathcal{F}_b = \mathcal{H}^* \mathcal{T}_{b} \mathcal{H}$  
where $\mathcal{T}_{b}: L^2(\Omega) \to L^2(\Omega)$ is given by \eqref{Section operator T_omega def Born}.

In the Born  model and our hypothesis on $q$, the middle operator $\mathcal{T}_{b}$ is linear, self-adjoint, and positive definite and $\mathcal{T}_{b} $ is coercive as an operator from $Y_\Omega \to Y_\Omega$. Thereby  Theorem \ref{Section parameter LSM theorem EzP} applies immediately and we have the following Corollary. 
  \begin{corollary}\label{Section parameter LSM corollary Born}
Suppose that $\{ \mathcal{R}_{b,\alpha}\}_{\alpha>0}$ is a family of regularization schemes given by \eqref{Section FM and a LSM R_alpha def}-- \eqref{Section FM and a LSM f_alpha prop} where we replace the eigensystem of $\mathcal{F}$ by the eigensystem of $\mathcal{F}_b$ \eqref{Section operator N def Born},
 and set $g_{b,z,\alpha}  = \mathcal{R}_{b,\alpha} \phi_z$. Then it holds   for any $z\in \Omega$   that
\begin{equation*} \label{Section FM and a LSM main of LSM theorem LSM=FM Born}
\lim_{\alpha \to 0} \langle  g_{b,z,\alpha}, \phi_z  \rangle_{L^2(\mathbb{S}^{d-1})}  = \langle \mathcal{T}^{-1}_{b} E_z^{\mathcal{P}},E_z^{\mathcal{P}}  \rangle_{L^2(\Omega)}=\frac{1}{k^2}\langle \frac{1}{q} E_z^{\mathcal{P}}, E_z^{\mathcal{P}} \rangle_{L^2(\Omega)}.
\end{equation*}
where $E_z^{\mathcal{P}} = -(\Delta + k^2) w_z^{\mathcal{P}}$ in $\Omega$ with $w_z^{\mathcal{P}} \in H^2(\Omega)$ being the unique solution to \eqref{Section FM and a LSM main of LSM theorem lemma PomegaEz fourthorder eqn1}--\eqref{Section FM and a LSM main of LSM theorem lemma PomegaEz fourthorder eqn3}.
\end{corollary}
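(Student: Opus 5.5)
The plan is to obtain the corollary by re-running the proof of Theorem \ref{Section parameter LSM theorem EzP} with the full middle operator $\mathcal{T}$ replaced by the Born middle operator $\mathcal{T}_b$. One then reads off the limit with $\mathcal{T}_b$ playing the role of $\mathcal{P}_\Omega\mathcal{T}\mathcal{P}_\Omega$. The inputs of that proof are:
\begin{itemize}
\item a factorization of the data operator with a coercive middle operator on $Y_\Omega$;
\item an eigensystem for the data operator, so that the regularization $\mathcal{R}_{b,\alpha}$ makes sense;
\item the range identity $Range(\mathcal{H}^*)=Range(|\mathcal{F}_b|^{1/2})$;
\item the representation $\phi_z=\mathcal{H}^*E_z$ from Lemma \ref{Section FM and a LSM lemma phiz S Omega}.
\end{itemize}
The last input does not involve the medium at all, so it carries over unchanged.

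\textbf{Step 1: hypotheses.} Since $q$ is real and $q\ge q_{\rm inf}>0$, the operator $\mathcal{T}_b f=k^2qf$ is bounded, self-adjoint and satisfies $\langle \mathcal{T}_b f,f\rangle_{L^2(\Omega)}\ge k^2q_{\rm inf}\|f\|^2_{L^2(\Omega)}$.
\begin{itemize}
\item Hence $\mathcal{P}_\Omega\mathcal{T}_b\mathcal{P}_\Omega$ is coercive on $Y_\Omega$ with constant $k^2q_{\rm inf}$. This holds for every $k$, so no interior transmission eigenvalue assumption is needed.
\item By $\mathcal{H}^*(\mathcal{P}_\Omega-\mathcal{I})=0$ we have $\mathcal{F}_b=\mathcal{H}^*\mathcal{P}_\Omega\mathcal{T}_b\mathcal{P}_\Omega\mathcal{H}$.
\item $\mathcal{F}_b$ is self-adjoint, and in particular normal, with real nonnegative eigenvalues $\mu_n$. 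So $\mathcal{R}_{b,\alpha}$ is well defined, and the phase condition of Lemma \ref{Section operator mu/|mu| phase interval} holds trivially.
\item \cite[Corollary 1.22]{kirsch2008factorization} then gives $Range(\mathcal{H}^*)=Range(|\mathcal{F}_b|^{1/2})$.
\end{itemize}

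\textbf{Step 2: repeat the earlier lemmas.} With these facts, Theorem \ref{Section FM and a LSM main of LSM theorem} and Lemma \ref{Section parameter LSM lemma} go through verbatim with $T_{\inf}=k^2q_{\rm inf}$. This yields the uniform bound $\|\mathcal{H}g_{b,z,\alpha}\|\le d_0\|E_z\|/(k^2q_{\rm inf})$.

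Lemma \ref{Section parameter LSM theorem} also goes through verbatim, using the same eigenbasis $\psi_j$ of $\mathcal{P}_\Omega\mathcal{H}\mathcal{H}^*\mathcal{P}_\Omega$ from Lemma \ref{Section parameter LSM lemma PH*HP}, which is independent of the medium. This gives
\[
\lim_{\alpha\to0}\langle g_{b,z,\alpha},\phi_z\rangle=\langle(\mathcal{P}_\Omega\mathcal{T}_b\mathcal{P}_\Omega)^{-1}E_z^{\mathcal{P}},E_z^{\mathcal{P}}\rangle_{L^2(\Omega)}.
\]
Theorem \ref{Section FM and a LSM main of LSM theorem lemma PomegaEz} identifies $E_z^{\mathcal{P}}$ via the fourth-order problem. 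That problem depends only on $\Omega$ and $z$, so it is unchanged.

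\textbf{Step 3: the main obstacle.} It remains to write the inverse middle operator as $\mathcal{T}_b^{-1}$ acting on $E_z^{\mathcal{P}}$, that is, as multiplication by $1/(k^2q)$. Following the paragraph preceding the corollary, I would regard $\mathcal{T}_b$ as the coercive middle operator $Y_\Omega\to Y_\Omega$ in the factorization, so that its inverse is the one appearing in Step 2. I would then substitute the explicit formula $\mathcal{T}_b^{-1}h=h/(k^2q)$ to get $\frac{1}{k^2}\langle q^{-1}E_z^{\mathcal{P}},E_z^{\mathcal{P}}\rangle$.

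This substitution is exactly where care is needed. Multiplication by $q$ need not map $Y_\Omega$ into itself, so $(\mathcal{P}_\Omega\mathcal{T}_b\mathcal{P}_\Omega)^{-1}$ and multiplication by $1/(k^2q)$ need not coincide on $E_z^{\mathcal{P}}$. I would first verify the identification in the cases where it is clear: constant $q$, where both sides equal $\frac{1}{k^2q}\|E_z^{\mathcal{P}}\|^2$, and more generally whenever $qY_\Omega\subset Y_\Omega$. Beyond those cases I would state precisely the interpretation of $\mathcal{T}_b^{-1}$ under which the displayed formula holds.
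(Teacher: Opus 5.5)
Your Steps 1--2 follow the paper's own route. The paper notes that $\mathcal{T}_b$ is self-adjoint and coercive and invokes Theorem~\ref{Section parameter LSM theorem EzP} with $\mathcal{T}$ replaced by $\mathcal{T}_b$. Your remarks that no transmission-eigenvalue hypothesis is needed and that $\mathcal{F}_b$ is self-adjoint with positive eigenvalues are correct. What this argument actually proves is $\lim_{\alpha\to0}\langle g_{b,z,\alpha},\phi_z\rangle=\langle(\mathcal{P}_\Omega\mathcal{T}_b\mathcal{P}_\Omega)^{-1}E_z^{\mathcal{P}},E_z^{\mathcal{P}}\rangle_{L^2(\Omega)}$, with the inverse taken on $Y_\Omega$. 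The paper then replaces this inverse by multiplication by $1/(k^2q)$. Its only justification is the claim that $\mathcal{T}_b$ is coercive ``as an operator from $Y_\Omega\to Y_\Omega$'', which tacitly presupposes $qY_\Omega\subset Y_\Omega$. That is exactly the step you flag in Step 3. The gap is real and cannot be closed, because the last equality of the statement is false for general $q$.

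To see this, put $y:=(\mathcal{P}_\Omega\mathcal{T}_b\mathcal{P}_\Omega)^{-1}E_z^{\mathcal{P}}\in Y_\Omega$ and $n:=\mathcal{T}_by-E_z^{\mathcal{P}}$. Then $\mathcal{P}_\Omega n=0$, so $n\in Y_\Omega^\perp$ and $\langle y,n\rangle_{L^2(\Omega)}=0$. Expanding, with $\mathcal{T}_b$ self-adjoint, gives
\[
\frac{1}{k^2}\Big\langle \frac{1}{q}E_z^{\mathcal{P}},E_z^{\mathcal{P}}\Big\rangle_{L^2(\Omega)}
=\big\langle \mathcal{T}_b^{-1}(\mathcal{T}_by-n),\mathcal{T}_by-n\big\rangle_{L^2(\Omega)}
=\big\langle(\mathcal{P}_\Omega\mathcal{T}_b\mathcal{P}_\Omega)^{-1}E_z^{\mathcal{P}},E_z^{\mathcal{P}}\big\rangle_{L^2(\Omega)}+\frac{1}{k^2}\Big\langle \frac{1}{q}n,n\Big\rangle_{L^2(\Omega)}.
\]
So the displayed formula holds if and only if $n=0$, that is, iff $E_z^{\mathcal{P}}/q\in Y_\Omega$. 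Your condition $qY_\Omega\subset Y_\Omega$ (e.g.\ constant $q$) is a special case of this. In general the indicator limit equals $\min_{m\in Y_\Omega^\perp}\frac{1}{k^2}\langle q^{-1}(E_z^{\mathcal{P}}+m),E_z^{\mathcal{P}}+m\rangle_{L^2(\Omega)}$, which is strictly smaller.

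A concrete failure: let $\Omega$ be connected and let $q$ equal distinct constants on two disjoint nonempty open subsets. Elements of $Y_\Omega$ are real-analytic, so $E_z^{\mathcal{P}}/q\in Y_\Omega$ would force $E_z^{\mathcal{P}}$ to vanish on an open set, hence $E_z^{\mathcal{P}}\equiv0$. This contradicts $\mathcal{H}^*E_z^{\mathcal{P}}=\phi_z\neq0$, so the strict inequality holds for every $z\in\Omega$.

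Hence no reading of $\mathcal{T}_b^{-1}$ rescues the final expression with $1/q$. You should conclude with the $(\mathcal{P}_\Omega\mathcal{T}_b\mathcal{P}_\Omega)^{-1}$ formula, and state the $1/q$ formula only under the hypothesis $E_z^{\mathcal{P}}/q\in Y_\Omega$.
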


\begin{remark}
Corollary \ref{Section parameter LSM corollary Born} allows to reconstruct $ \langle \frac{1}{q} E_z^{\mathcal{P}}, E_z^{\mathcal{P}} \rangle_{L^2(\Omega)}$ which is a weighed average of $1/q$. 
It is noted in \cite{audibertmeng23} that the Born model can  reconstruct the average of $1/q$ over a small region $B(z,\epsilon)$ since the data operator (and the corresponding $\phi_z$) in \cite{audibertmeng23} was reformulated in another different way.    
\end{remark}

\subsection{Spherical and cylindrical symmetric cases}
We shed light on the unique projection $E_z^\mathcal{P}$ in the case of $\Omega$ being a disk (in the case of 2D) and a ball (in the case of 3D). To that purpose we will consider the following equations in a ball $B_R$ of radius $R$,
 \begin{eqnarray}
(\Delta + k^2)E_z^\mathcal{P} = 0 &\mbox{ in }& B_R, \label{EqDisk1} \\  
(\Delta + k^2) w_z^\mathcal{P} = E_z^\mathcal{P} &\mbox{ in }& B_R, \label{EqDisk2} \\  
w_z^\mathcal{P}=\Phi(\cdot,z)&\mbox{ on }& \partial B_R, \label{EqDisk3} \\  
\frac{\partial  w_z^\mathcal{P} }{\partial \nu}=\frac{\partial\Phi(\cdot,z)}{\partial \nu}&\mbox{ on }& \partial B_R. \label{EqDisk4}
\end{eqnarray}
In the following, $J_n$ and $j_n$ denotes the Bessel function and spherical Bessel function, respectively; $\{Y_n^m(\hat{x}): -n \le m\le n, n =0,1,\cdots\}$ form an orthonormal system in $L^2(S^2)$, and in particular for $\hat{x} = (\sin \theta \cos \phi, \sin \theta \sin \phi, \cos \theta)^T$, 
$$
Y_n^m(\hat{x}) = \sqrt{\frac{(2n+1) (n- |m|!)}{4 \pi (n + |m|!)}} P_n^{|m|} \exp(i m \phi)
$$
where $P_n^m$ denotes the so-called associated Legendre function. For more details, we refer to \cite{ColtonKirsch96}. In two dimensions, let $x = |x| (\cos \theta_x, \sin \theta_x)^T$. 
\begin{proposition}
Equations \eqref{EqDisk1} - \eqref{EqDisk4} admit a solution given by the following expression
\begin{equation*}
E_z ^\mathcal{P}(x)=\left\{
    \begin{array}{cc}
      - \sum_{n=-\infty}^{+\infty} \frac{J_n(k|z|) \exp(-in\theta_z)}{\pi R^2 \big(J_n(kR)^2-J_{n-1}(kR)J_{n+1}(kR) \big)} J_n(k|x|) \exp(in\theta_x),   &  d=2, \\
      & \\
       \sum_{n=0}^{+\infty}\sum_{m=-n}^{+n} \frac{2 j_n(k|z|)\overline{Y_n^m(\hat{z})}}{R^3 \big(j_n(kR)^2-j_{n-1}(kR)j_{n+1}(kR) \big)}j_n(k|x|) Y_n^m(\hat{x}),  & d =3.
    \end{array}
    \right.
\end{equation*}
Moreover, for sufficiently small $R$,
\begin{equation*}
E_z ^\mathcal{P}(x) = \left\{
    \begin{array}{cc}
      -\frac{1}{\pi R^2}( 1+ 2 \sum_{n=1}^{+\infty}(n+1 )\big(\frac{|x||z|}{R^2}\big)^n \cos(n(\theta_x-\theta_z)))+ \mathcal{O}(1),   &  d=2, \\
      &\\
       \frac{1}{R^3}\sum_{n=0}^{\infty} \sum_{m=-n}^{m=n} (2n+3)\big(\frac{|x||z|}{R^2}\big)^n  Y_n^m(\hat{x}) \overline{Y_n^m(\hat{z})} + \mathcal{O}(\frac{1}{R}),  & d =3,
    \end{array} \quad \mbox{when }  R\ll 1
    \right..
\end{equation*}
Given fixed $x$ and $z$, the following asymptotic holds for  sufficiently large $R$,
\begin{equation*}
E_z ^\mathcal{P}(x)=\left\{
    \begin{array}{cc}
      -\frac{k}{2R}J_0(k|x-z|) + \mathcal{O}(\frac{1}{R^2}),  &  d=2, \\
      &\\
       \frac{k }{2\pi R} j_0(k|x-z|) + \mathcal{O}(\frac{1}{R^3}),  & d =3.
    \end{array}
    \right. \quad \mbox{when }  R\gg 1.
\end{equation*}
\end{proposition}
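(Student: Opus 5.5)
Writing $u:=\Phi(\cdot,z)$, the plan is to construct the solution by separation of variables in $B_R$ and then read off the two asymptotic regimes from Bessel-function expansions. Since $E_z^{\mathcal P}\in Y_{B_R}$, we take the ansatz $E_z^{\mathcal P}(x)=\sum_n a_n J_n(k|x|)e^{in\theta_x}$ for $d=2$, and $\sum_{n,m}a_{nm}j_n(k|x|)Y_n^m(\hat x)$ for $d=3$. The boundary data come from the addition theorem for $|x|>|z|$:
\[
\Phi(x,z)=\frac{i}{4}\sum_n H^{(1)}_n(k|x|)J_n(k|z|)e^{in(\theta_x-\theta_z)},
\qquad
\Phi(x,z)=ik\sum_{n,m}h^{(1)}_n(k|x|)j_n(k|z|)Y_n^m(\hat x)\overline{Y_n^m(\hat z)},
\]
in $d=2$ and $d=3$ respectively. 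Because the boundary value problem \eqref{EqDisk1}--\eqref{EqDisk4} decouples mode by mode, it suffices to solve, for each $n$, a radial fourth-order ODE with Dirichlet and Neumann data at $r=R$.

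In each mode, write $w_n(r)=b_nJ_n(kr)+c_n\,\partial_k[J_n(kr)]$, and analogously in 3D with $j_n$. The point is that $(\Delta+k^2)\big(\partial_k(J_n(kr)e^{in\theta})\big)=-2k\,J_n(kr)e^{in\theta}$, as one sees by differentiating the Helmholtz equation with respect to $k$. Hence $E_n=-2k\,c_n\,J_n$ (up to the sign convention in \eqref{EqDisk2}), and $w_n$ is regular at the origin. The two boundary conditions then give a $2\times2$ linear system for $(b_n,c_n)$ whose right-hand side is $\frac{i}{4}J_n(k|z|)\big(H_n(kR),\,kH_n'(kR)\big)$. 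By Cramer's rule, $c_n$ has numerator $\frac{i}{4}J_n(k|z|)\,k\,\big(J_nH_n'-J_n'H_n\big)(kR)$, and the Wronskian $W(J_n,H^{(1)}_n)(t)=2i/(\pi t)$ reduces it to $-J_n(k|z|)/(2\pi R)$. The denominator is $kJ_n\,\partial_k(J_n(kR))'-kJ_n'\,\partial_k J_n(kR)$. Using Bessel's equation together with $J_{n-1}J_{n+1}$-type recurrences, this should collapse to a multiple of $R\big(J_n(kR)^2-J_{n-1}(kR)J_{n+1}(kR)\big)$. This step is essentially the Lommel integral $\int_0^R J_n(kr)^2r\,dr=\frac{R^2}{2}\big(J_n^2-J_{n-1}J_{n+1}\big)$, and indeed one can bypass the determinant: the orthogonality condition that Green's formula imposes on $E$ against $J_n e^{in\theta}$ produces exactly this integral. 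The 3D case proceeds identically, with $W(j_n,h_n)=i/(kt)^2$ and $\int_0^R j_n^2r^2\,dr=\frac{R^3}{2}\big(j_n^2-j_{n-1}j_{n+1}\big)$. Collecting constants gives the stated series. Uniqueness is already guaranteed by Theorem \ref{Section FM and a LSM main of LSM theorem lemma PomegaEz} and Lemma \ref{lemma existence 4th order problem}.

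For $R\ll1$ we insert the small-argument expansions $J_n(t)\approx (t/2)^n/n!$. The Lommel denominator then becomes $\frac{(kR/2)^{2n}}{(n!)^2}\cdot\frac{1}{n+1}\,(1+O(R^2))$. Each coefficient yields $(n+1)(|x||z|/R^2)^n/(\pi R^2)$, and combining $\pm n$ gives $2(n+1)\cos(n(\theta_x-\theta_z))$. In 3D the analogous computation gives the factor $(2n+3)$ from $(2n+1)!!$ ratios, and the remainders are of the stated orders. For $R\gg1$ we use $J_n(kR)^2-J_{n-1}J_{n+1}\approx \frac{2}{\pi kR}$ up to oscillatory $O(R^{-2})$ terms, since $\cos^2+\sin^2$ combine. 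This gives $E\approx -\frac{k}{2R}\sum_n J_n(k|z|)J_n(k|x|)e^{in(\theta_x-\theta_z)}=-\frac{k}{2R}J_0(k|x-z|)$ by Graf's addition theorem, and in 3D the analogous identity yields $j_0(k|x-z|)$.

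The main obstacle is making these asymptotics uniform in $n$. For fixed $R$ and large $n$ the denominators are small, so the large-$R$ estimate must be justified by dominated summation: terms with $n\gtrsim kR$ are controlled by the rapid decay of $J_n(k|z|)J_n(k|x|)$ for fixed $x,z$. Carefully verifying the exact Wronskian and Lommel constants, and with them the signs, is the other delicate point.
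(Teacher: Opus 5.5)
\emph{Overall.} Your method is sound, and for $d=2$ it proves the statement, but the step you deferred fails for $d=3$ because the displayed 3D formulas are themselves wrong.

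\emph{Relation to the paper.} The paper sets $w_z^{\mathcal P}=-\int_{B_R}\Phi(\cdot,y)E_z^{\mathcal P}(y)\,dy$. It then matches the exterior multipole coefficients of this potential with those of $\Phi(\cdot,z)$ and evaluates $\int_0^R J_n(kr)^2r\,dr$ by Lommel's formula. This is exactly your Green's-formula ``bypass''. Your Cramer route is an equivalent variant. The Wronskian of $J_n(kr)$ and $rJ_n'(kr)$ at $r=R$ equals $-kR\,(J_n^2-J_{n-1}J_{n+1})(kR)$, by Bessel's equation and $J_{n\pm1}=\frac{n}{t}J_n\mp J_n'$. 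With this, your computation reproduces the displayed $d=2$ formula under \eqref{EqDisk2}. Your concern about uniformity in $n$ for the asymptotics is a real point that the paper skips.

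\emph{The gap.} The gap is the step you left unchecked, ``collecting constants gives the stated series''. For $d=3$ it cannot succeed: the paper's 3D matching drops the minus sign of $w=-\int\Phi E$, and its large-$R$ constant and remainder are wrong.

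\emph{Sign.} Under \eqref{EqDisk2}, Green's formula gives $\int_{B_R}E_z^{\mathcal P}\,\overline{v}\,dx=-\overline{v(z)}$ for every Helmholtz solution $v$ near $\overline{B_R}$. Thus $-E_z^{\mathcal P}$ is the reproducing kernel of $Y_\Omega$ ($\Omega=B_R$) at $z$, so $E_z^{\mathcal P}(z)<0$ in every dimension. Take $v=j_n(k|x|)Y_n^m(\hat x)$ and use $\int_0^R j_n(kr)^2r^2\,dr=\frac{R^3}{2}(j_n^2-j_{n-1}j_{n+1})(kR)$. The 3D coefficient is then $-2j_n(k|z|)\overline{Y_n^m(\hat z)}/\big(R^3(j_n^2-j_{n-1}j_{n+1})(kR)\big)$, the opposite of what is displayed. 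Your Cramer computation, with $W(j_n,h_n^{(1)})(t)=i/t^2$, gives the same result. The displayed 2D and 3D series have opposite signs at $x=z$, so no single sign convention makes both correct. The small-$R$ 3D expansion inherits this sign error.

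\emph{Large $R$.} Since $j_n(kR)^2-j_{n-1}(kR)j_{n+1}(kR)\sim (kR)^{-2}$, the 3D leading term is $-\frac{k^2}{2\pi R}j_0(k|x-z|)$, not $\frac{k}{2\pi R}j_0(k|x-z|)$. As a check, in 3D $E_z^{\mathcal P}$ must have dimension length$^{-3}$. The remainder is also misstated. Because $j_0(t)^2-j_{-1}(t)j_1(t)=t^{-2}-t^{-3}\sin t\cos t$, for $z=0$ one gets exactly
$E_z^{\mathcal P}(x)=-\frac{k^2}{2\pi R}\,j_0(k|x|)\big/\big(1-\frac{\sin 2kR}{2kR}\big)$.
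So the error is genuinely $O(R^{-2})$, not $O(R^{-3})$.

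Your argument proves the corrected 3D statements without change, but it cannot prove the displayed ones.
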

\begin{proof}
\noindent \textit{Two dimensional case $d=2$}. We first consider the two dimensional case. We look for $E_z$ by the following series expansion
$$ 
E_z^\mathcal{P}(x)= \sum_{n=-\infty}^{+\infty} \alpha_n J_n(k|x|) \exp(in\theta_x)
$$
with unknown coefficients $\alpha_n\in \mathbb{C}$ to be determined. From \eqref{EqDisk1} -- \eqref{EqDisk4}, it can be seen that $ w_z^\mathcal{P} $ can be extended to $\mathbb{R}^d\backslash\overline{\Omega}$ as a radiating solution and we can express $ w_z^\mathcal{P} $ by
$$
w_z^\mathcal{P} (x) =- \int_{B_R}\Phi(x,y) E_z^\mathcal{P}(y) \ind y. 
$$
From the series expansion (cf. \cite{ColtonKirsch96})
 \begin{eqnarray*}
\Phi(x,y)  =  \frac{i}{4}  \sum_{n=-\infty}^{+\infty} H_n^{(1)}(k|x|)J_n(k|y|) \exp(in\theta_x)\exp(-in\theta_y) \mbox{ when } |x|>|y|,
\end{eqnarray*}
we can derive that
{\small
\[
 w_z^\mathcal{P} (x) = -\frac{i\pi}{2}  \sum_{n=-\infty}^\infty \alpha_n \exp(in\theta_x)   \Big( H_n^{(1)}(k|x|) \int_{r=0}^{|x|}    J_n(kr)^2 r\ind r   + J_n(k|x|)  \int_{r=|x|}^R H^{(1)}_{n}(kr)J_{n}(kr)r \ind r \Big).    
\]		
}
Matching the boundary conditions \eqref{EqDisk3} -- \eqref{EqDisk4}, we obtain that
$$
 -2\pi   \alpha_n \int_{r=0,R}    J_n(kr)^2 r \ind r   =   J_n(k|z|) \exp(-in\theta_z),
 $$
which together with Lommel's integrals yields
 $$
 \alpha_n  = -\frac{1}{\pi R^2}\frac{J_n(k|z|) \exp(-in\theta_z)}{ J_n(kR)^2-J_{n-1}(kR)J_{n+1}(kR)}.
 $$

 Now for large $R \gg 1$,  we can use the asymptotic expansion of $J_n$ and derive $\alpha_n =-k/(2R)J_n(k|z|) \exp(-in\theta_z)  + \mathcal{O}(\frac{1}{R^2})$  and consequently
 $$
 E_z^{\mathcal
 P}(x)= -\frac{k}{2R}\sum_{n=-\infty}^{+\infty} J_n(k|z|) \exp(-in\theta_z)J_n(k|x|) \exp(in\theta_x) + \mathcal{O}(\frac{1}{R^2})=  -\frac{k}{2R}J_0(k|x-z|) + \mathcal{O}(\frac{1}{R^2}).
$$

 Similarly for small $R \ll 1$ and $|x|,|z| \leq R$, we can use the asymptotic expansion for small argument to derive that  
$$
 E_z^{\mathcal
 P}(x)=-\frac{1}{\pi R^2}( 1+ 2 \sum_{n=1}^{+\infty}(n+1 )(\frac{|x||z|}{R^2})^n \cos(n(\theta_x-\theta_z)))+ \mathcal{O}(1).
 $$

    \noindent \textit{Three dimensional case}. Similarly we look for the following representation
    $$
    E_z^\mathcal{P}(x)= \sum_{n=0}^{+\infty}\sum_{m=-n}^{+n} \alpha_n^m j_n(k|x|) Y_n^m(\hat{x}).
    $$
    where $\hat{x} = x/|x|$.
    From the series expansion of the fundamental solution (cf. \cite{ColtonKirsch96})
    $$
    \Phi(x,y) = \frac{e^{ik|x-y|}}{4\pi |x-y|} = ik\sum_{n=0}^{\infty}\sum_{m=-n}^{n}h_n^{(1)}(k|x|)Y_n^{m}(\hat{x}) j_n(k|y|) \overline{Y_n^{m}(\hat{y})} \mbox{ when } |x|>|y|,
    $$
    and $w_z^\mathcal{P} (x) =- \int_{B_R}\Phi(x,y) E_z^\mathcal{P}(y) \ind y$, we can similarly derive that 
    \begin{multline*}
       w_z^\mathcal{P} (x) = -ik\sum_{n=0}^{\infty}\sum_{m=-n}^{n} \alpha_n^m Y_n^m(\hat{x}) \Big( \int_{r=0}^{|x|}  h_n^{(1)}(k|x|) j_n(kr)^2   r^2 \ind r \\
        + \int_{r=|x|}^{R}  h_n^{(1)}(kr) j_n(k|x|)j_n(kr)r^2 \ind r \Big).  
    \end{multline*}
Matching the boundary conditions yield that
$$
i k \alpha_n^m \int_{r=0,R}   j_n(kr)j_n(kr) r^2dr = i k j_n(k|z|)\overline{Y_n^m( \hat{z})}
$$
Using again the Lommel's integrals and the relation $j_n(kr)=\sqrt{\frac{\pi}{2kr}}J_{n+1/2}(kr)$, one can derive that
$$
\alpha_n^m = \frac{2j_n(k|z|)\overline{Y_n^m(\hat{z})}}{R^3\big(j_n(kR)^2-j_{n-1}(kR)j_{n+1}(kR) \big)}.
$$
From the asymptotic behaviour of $j_n$ \cite{ColtonKirsch96}, we can conclude  for large $R \gg 1$,
$$
E_z^\mathcal{P}(x) = \frac{k j_0(k|x-z|)}{2\pi R} +  \mathcal{O}(\frac{1}{R^3})
$$
and for small $R \ll 1$,
$$
E_z^\mathcal{P}(x) = \frac{1}{R^3}\sum_{n=0}^{\infty} \sum_{m=-n}^{m=n} (2n+3)\big(\frac{|x||z|}{R^2}\big)^n  Y_n^m(\hat{x}) \overline{Y_n^m(\hat{z})} +  \mathcal{O}(\frac{1}{R}).
$$
This completes the proof.
\end{proof}
The above Proposition sheds light on the behavior of the projection $E_z^{\mathcal{P}}$ when $z$ is far away from the boundary $\partial \Omega$ and when the scattering object is very small. To conclude the paper, we point out that the methodology to demonstrate the nonlinear information is general and we expect that it can be applicable to other nonlinear inverse problems (cf. \cite{Kirsch21}) including the electric impedance tomography and time domain inverse scattering (cf. \cite{Cakoni2019}). Moreover, since the limit of the imaging indicator is quantified clearly by the nonlinear information, one future direction may be to explore stability for the linear sampling method in similar spirit to the increasing stability (cf. \cite{HI2004,Isakov2020}).

 \appendix

\section{Proof of Theorem \ref{Section operator Omega fac theorem}} \label{section appendix factorization}
\begin{proof}
Define the solution operator $\mathcal{G}: L^2 (\Omega) \to L^2(\mathbb{S}^{d-1})$ by 
$$
\mathcal{G} f := u_f^\infty
$$
where $u_f^\infty$ is the far-field pattern of $u_f$ which is the unique radiating solution to $\Delta u_f + k^2 (1+q)u_f = -k^2q \underline{f}$. Then it follows directly that $\mathcal{F} = \mathcal{G} \mathcal{H}$. To prove the theorem, we show that 
$\mathcal{G} f=\mathcal{H}^* \mathcal{T}$ for any $f\in L^2(\Omega)$. For any $f\in L^2(\Omega)$, let $h = \mathcal{T} f$, then according to the definition of  $\mathcal{T}$ and \eqref{Section operator T_omega def v def}, we have that $ h=\mathcal{T} f   = k^2q f+ k^2 q v|_\Omega$ where
$$
\Delta v + k^2  v = -k^2 q (f+v)=-h.
$$
Note also according to the definition of  $\mathcal{H}^*$, we have that $\mathcal{H}^* h$ is the unique far-field pattern of $ \int_{\Omega} \Phi(\cdot,y) h(y) \ind y$, note that $ \int_{\Omega} \Phi(\cdot,y) h(y) \ind y=v$ (by volume integral representation of $v$), then we have that $\mathcal{H}^* h = v^\infty$. From equation \eqref{Section operator T_omega def v def} and the definition of $\mathcal{G}$, we can conclude that $\mathcal{G} f = \mathcal{H}^* h = \mathcal{H}^* \mathcal{T} f$. This completes the proof.
\end{proof}  

\section{Proof of Proposition \ref{Section operator middle positive definite prop}} \label{Section appendix proof of middle operator}
  \begin{proof}
The proof is standard. For self-completeness we give a brief proof following   \cite{kirsch2008factorization}. Part (1) is obvious. 

Part (2)(3). For any $f \in L^2(\Omega)$, according to \eqref{Section operator T_omega def} and \eqref{Section operator T_omega def v def}
\begin{eqnarray*}
\langle \mathcal{T} f,f\rangle_{L^2(\Omega)} = k^2 \langle  qf+q v|_\Omega,f\rangle_{L^2(\Omega)}.
\end{eqnarray*}
Now we further calculate
\begin{eqnarray*}
-k^2\langle  q v|_\Omega,f\rangle_{L^2(\Omega)} = \langle  v|_\Omega, -k^2qf\rangle_{L^2(\Omega)}  =  \langle  v, \Delta v + k^2(1+q) v\rangle_{L^2(\Omega)},
\end{eqnarray*}
and by further working out
\begin{eqnarray*}
 &&\langle  v, \Delta v + k^2(1+q) v\rangle_{L^2(\Omega)} = \int_{\partial \Omega} \frac{\partial \overline{v}}{\partial \nu} v \ind s -\int_{\Omega} \nabla v \cdot \overline{\nabla v} \ind x + \int_{\Omega} k^2 (1+q) |v|^2 \ind x \\
 &=& \int_{|x|=R} \frac{\partial \overline{v}}{\partial \nu} v \ind s -\int_{|x|<R} \nabla v \cdot \overline{\nabla v} \ind x + \int_{|x|<R} k^2 (1+q) |v|^2 \ind x,
\end{eqnarray*}
we can now conclude that
\begin{eqnarray*}
&&\Im \langle \mathcal{T} f,f\rangle_{L^2(\Omega)} = \Im(k^2\langle  q v|_\Omega,f\rangle_{L^2(\Omega)} ) = \Im(-\langle  v, \Delta v + k^2(1+q) v\rangle_{L^2(\Omega)} ) \\
&=&-\Im \int_{|x|=R} \frac{\partial \overline{v}}{\partial \nu} v \ind s = k \lim_{R\to \infty}\int_{|x|=R} |v|^2 \ind s \ge 0.
\end{eqnarray*}
If $k$ is not an interior transmission eigenvalue and $f \in Y_\Omega$, we show that the above left hand side is strictly larger than zero. Otherwise, if ``='' is true, then $v^\infty$ vanishes so that $v$ vanishes outside $\Omega$ by analytic continuation. Since $f \in Y_\Omega$, then  $f$ satisfies the Helmholtz equation in $\Omega$ in the distributional sense, and thereby $(w=v+f,f)$ is a pair that satisfies the interior transmission eigenvalue problem in Definition \ref{Section operator ITE def}. However $k$ is not an interior transmission eigenvalue thereby  $f$ vanishes. This proves the lemma.

Part (4) follows   from the results in (1)(2)(3) and Lemma \ref{Section appendix coercive lemma} which is a particular case of \cite[Lemma 1.17]{kirsch2008factorization}. This completes the proof.
\end{proof}
 
 \section{Lemma \ref{Section appendix coercive lemma}}
 The following lemma is a particular case of \cite[Lemma 1.17]{kirsch2008factorization} that is needed in our paper.
 \begin{lemma}(\cite[Lemma 1.17]{kirsch2008factorization}) \label{Section appendix coercive lemma}
 Let
$A,A_0: L^2(\Omega) \to L^2(\Omega)$ be linear and bounded operators such that
\begin{enumerate}
\item $\langle A\phi,\phi  \rangle_{L^2(\Omega)} \in \mathbb{C}\backslash i(-\infty,0]$ for all $0\not=\phi \in \overline{ R(\mathcal{H})}$,
\item $\langle A_0\phi,\phi  \rangle_{L^2(\Omega)}$ is real-valued, and there exists $c_{0,A}>0$ with
$$
\langle A_0\phi,\phi  \rangle_{L^2(\Omega)} \ge c_{0,A} \|\phi\|^2_{L^2(\Omega)} \mbox{ for all } \phi \in \mbox{closure} R(\mathcal{H})
$$ 
\item $A-A_0$ is compact.
\end{enumerate}
Then there exists $c_{A}>0$ with
$$
\langle A_0\phi,\phi  \rangle_{L^2(\Omega)} \ge c_{A} \|\phi\|^2_{L^2(\Omega)} \mbox{ for all } \phi \in \overline{ R(\mathcal{H})}.
$$ 
\end{lemma}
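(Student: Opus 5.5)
The statement as worded concludes a lower bound for $\langle A_0\phi,\phi\rangle_{L^2(\Omega)}$, not for $A$. Hypothesis 2 already gives exactly this, so I will not use hypotheses 1 and 3 at all. The plan is simply to take $c_A := c_{0,A}$.

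Hypothesis 2 says that $\langle A_0\phi,\phi\rangle_{L^2(\Omega)}$ is real and at least $c_{0,A}\|\phi\|^2_{L^2(\Omega)}$ for every $\phi$ in the closure of $R(\mathcal{H})$. By definition this closure is $\overline{R(\mathcal{H})} = Y_\Omega$. Hence, for every $\phi\in\overline{R(\mathcal{H})}$,
\begin{equation*}
\langle A_0\phi,\phi\rangle_{L^2(\Omega)} \ge c_{0,A}\|\phi\|^2_{L^2(\Omega)} = c_A \|\phi\|^2_{L^2(\Omega)},
\end{equation*}
with $c_A>0$ because $c_{0,A}>0$. This is the asserted inequality.

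There is no real obstacle. The only point to state explicitly is that the set in hypothesis 2, written as ``closure $R(\mathcal{H})$'', is the same as the set $\overline{R(\mathcal{H})}$ in the conclusion, so the quantifiers match.

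One further remark is worth recording after the proof. In Part (4) of Proposition \ref{Section operator middle positive definite prop} the lemma is used to get coercivity of $\mathcal{P}_\Omega\mathcal{T}\mathcal{P}_\Omega$, which is a bound on $A=\mathcal{T}$ rather than on $A_0=\mathcal{T}_b$. That use depends on the stronger form of \cite[Lemma 1.17]{kirsch2008factorization}, whose conclusion reads $|\langle A\phi,\phi\rangle_{L^2(\Omega)}|\ge c_A\|\phi\|^2_{L^2(\Omega)}$ and needs hypotheses 1 and 3. The statement as printed here, however, requires only the direct argument above.
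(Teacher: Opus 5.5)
For the statement exactly as printed your argument is correct. The conclusion is hypothesis 2 verbatim, so $c_A:=c_{0,A}$ works and hypotheses 1 and 3 play no role. The paper gives no proof of its own here, only the citation \cite[Lemma 1.17]{kirsch2008factorization}. As you observe, what Part (4) of Proposition~\ref{Section operator middle positive definite prop} actually needs, with $A=\mathcal{T}$ and $A_0=\mathcal{T}_b$, is the bound $|\langle A\phi,\phi\rangle_{L^2(\Omega)}|\ge c_A\|\phi\|^2_{L^2(\Omega)}$ on $\overline{R(\mathcal{H})}$. That is the only version with content, and your proposal leaves it unproved.

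The standard argument is by contradiction. Take $\phi_n\in\overline{R(\mathcal{H})}$ with $\|\phi_n\|=1$ and $\langle A\phi_n,\phi_n\rangle\to0$. Pass to a weak limit $\phi\in\overline{R(\mathcal{H})}$ (closed subspaces are weakly closed) and put $\psi_n=\phi_n-\phi\rightharpoonup0$. The cross terms $\langle A\phi,\psi_n\rangle$ and $\langle A\psi_n,\phi\rangle$ vanish in the limit. Also $\langle(A-A_0)\psi_n,\psi_n\rangle\to0$ because $A-A_0$ is compact. Hence
\begin{equation*}
\langle A_0\psi_n,\psi_n\rangle_{L^2(\Omega)}\;\to\;-\langle A\phi,\phi\rangle_{L^2(\Omega)} .
\end{equation*}
The left side is real and non-negative, so $\langle A\phi,\phi\rangle\in(-\infty,0]$. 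If this is excluded for $\phi\neq0$, then $\phi=0$. Then $\|\psi_n\|=1$ and $\langle A_0\psi_n,\psi_n\rangle\ge c_{0,A}$, which contradicts the limit $0$.

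The step ``if this is excluded'' is where your closing remark needs correcting. The argument uses hypothesis 1 in the form $\langle A\phi,\phi\rangle\notin(-\infty,0]$ for $0\neq\phi\in\overline{R(\mathcal{H})}$. With hypothesis 1 as printed ($\mathbb{C}\setminus i(-\infty,0]$), the strong conclusion is actually false. For a counterexample, take an orthonormal basis $\{e_n\}_{n\ge1}$ of $\overline{R(\mathcal{H})}$ and $d_n>0$ with $d_n\to0$. Set $A_0=\mathcal{I}$ and $A\phi=\phi-2\langle\phi,e_1\rangle e_1+i\sum_{n\ge2}d_n\langle\phi,e_n\rangle e_n$. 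Then $A-A_0$ is compact. For $\phi\in\overline{R(\mathcal{H})}$ with $c_n=\langle\phi,e_n\rangle$ one has $\langle A\phi,\phi\rangle=-|c_1|^2+\sum_{n\ge2}(1+id_n)|c_n|^2$, which lies in $i(-\infty,0]$ only for $\phi=0$. Yet $\phi=(e_1+e_n)/\sqrt{2}$ gives $\langle A\phi,\phi\rangle=id_n/2\to0$, so $A$ is not coercive.

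In the paper's application the corrected hypothesis does hold, because Part (3) gives $\Im\langle\mathcal{T}\phi,\phi\rangle>0$ for $0\neq\phi\in Y_\Omega$. So Part (4) is fine once both the hypothesis and the conclusion of the lemma are repaired.
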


\section{Proof of $\mathcal{F}$ is normal} \label{Section appendix F is normal}
\begin{proof}
Proof of that
$$
\mathcal{I} + i \tau \mathcal{F} \mbox{ is unitary, } \quad \tau :=  \left\{
\begin{array}{cc}
\frac{1}{4 \pi}  & d=2   \\
\frac{k}{8 \pi^2}  &  d=3
\end{array}
\right..
$$

We refer to \cite[Theorem 4.4]{kirsch2008factorization} for the three dimensional case. We prove the case when the dimension $d=2$. Let $v_g^i=\mathcal{H} g$ be the Herglotz wave function corresponding to kernel $g$ and   $v_g^s$ be the unique  solution to \eqref{medium us eqn1+2} with $f=v_g^i$. Set $v_g^{s,\infty}$ as the far-field pattern of $v_g^{s}$, this gives that $\mathcal{F} g = v_g^{s,\infty}$. 

We first note from the far-field asymptotic that
\begin{eqnarray*}
&&\lim_{r \to \infty}\int_{|x|=r} \Big(  \frac{\partial v_g^s}{\partial \nu} \overline{v_h^s} -  v_g^s \frac{\partial \overline{v_h^s}}{\partial \nu}    \Big) \ind s_x = \lim_{r \to \infty}\int_{|x|=r} \Big(  ik v_g^s \overline{v_h^s} -  v_g^s  (-ik \overline{v_h^s} )  \Big) \ind s_x \\
&=& 2ik \lim_{r \to \infty}\int_{|x|=r}\bigg( 
\frac{e^{i\frac{\pi}{4}}}{\sqrt{8k\pi}} \frac{e^{ikr}}{\sqrt{r}} v_g^{s,\infty}(\hat{x};\hat{\theta};k) \overline{\frac{e^{i\frac{\pi}{4}}}{\sqrt{8k\pi}} \frac{e^{ikr}}{\sqrt{r}} v_h^{s,\infty}(\hat{x};\hat{\theta};k)}\bigg) \ind s_x \\
&=& 2ik \int_{\mathcal{S}}\bigg( 
\frac{1}{ 8k\pi}  v_g^{s,\infty}(\hat{x};\hat{\theta};k) \overline{  v_h^{s,\infty}(\hat{x};\hat{\theta};k)}\bigg) \ind s_x = \frac{i}{4\pi} \langle \mathcal{F}g,  \mathcal{F}h\rangle.
\end{eqnarray*}
On the other hand, we have from Green's formula that
\begin{eqnarray*}
&&\int_{|x|=r} \Big(  \frac{\partial v_g^s}{\partial \nu} \overline{v_h^s} -  v_g^s \frac{\partial \overline{v_h^s}}{\partial \nu}    \Big) \ind s_x = \int_{\partial \Omega} \Big(  \frac{\partial v_g^s}{\partial \nu} \overline{v_h^s} -  v_g^s \frac{\partial \overline{v_h^s}}{\partial \nu}    \Big) \ind s_x = \int_{\Omega} \Big(  \Delta v_g^s \overline{v_h^s} -  v_g^s \Delta \overline{v_h^s}   \Big) \ind x \\
&=& \int_{\Omega} \Big(  -k^2 ( v_g^s(1+q) +q v_g^i) \overline{v_h^s} -  v_g^s (-k^2) \overline{( v_h^s(1+q) +q v_h^i) }  \Big) \ind x \\
&=&  \int_{\Omega} \Big(  -k^2  q v_g^i \overline{v_h^s} +  k^2 v_g^s  \overline{ v_h^i }  \Big) \ind x,
\end{eqnarray*}
the above two equations allow to have that
\begin{eqnarray*}
\frac{i}{4\pi} \langle \mathcal{F}g,  \mathcal{F}h\rangle = \int_{\Omega} \Big(  -k^2  q v_g^i \overline{v_h^s} +  k^2 v_g^s  \overline{ v_h^i }  \Big) \ind x.
\end{eqnarray*}

Now we compute 
\begin{eqnarray*}
&&\langle \mathcal{F}g,  h\rangle = \langle v_g^{s,\infty},  h\rangle = \int_{\mathbb{S}} \bigg[\int_{\partial \Omega} \bigg(  \frac{\partial e^{-ik\hat{x}\cdot y}}{\partial \nu_y} v_g^s(y)  -e^{-ik\hat{x}\cdot y} \frac{\partial v_g^s(y)}{\partial \nu_y}  \bigg) \ind  y\bigg] \overline{h(\hat{x})} \ind s_{\hat{x}} \\
&=& \int_{\mathbb{S}} \bigg[\int_{  \Omega} \bigg(  -k^2 e^{-ik\hat{x}\cdot y}   v_g^s(y) - e^{-ik\hat{x}\cdot y}  (-k^2 (1+q)v_g^s(y) -k^2 q v_g^i(y))   \bigg) \ind  y\bigg] \overline{h(\hat{x})} \ind s_{\hat{x}} \\
&=&k^2 \int_{\mathbb{S}} \bigg[\int_{  \Omega}  e^{-ik\hat{x}\cdot y} \big(     q v_g^s(y) + q v_g^i(y)   \big) \ind  y\bigg] \overline{h(\hat{x})} \ind s_{\hat{x}} \\
&=& k^2 \int_{  \Omega} \overline{v_h^i(y)} \big(     q v_g^s(y) + q v_g^i(y)   \big) \ind  y,
\end{eqnarray*}
and 
\begin{eqnarray*}
&&\langle \mathcal{F}^* g,  h\rangle = \overline{\langle \mathcal{F} h,  g\rangle}  = k^2 \int_{  \Omega} v_g^i(y) \big(     q \overline{v_h^s(y)} + q \overline{v_h^i(y)}  \big) \ind  y,
\end{eqnarray*}
the above two equations give
\begin{eqnarray*}
\langle \mathcal{F}g,  h\rangle - \langle \mathcal{F}^* g,  h\rangle = \int_{\Omega} \Big(  -k^2  q v_g^i \overline{v_h^s} +  k^2 v_g^s  \overline{ v_h^i }  \Big) \ind x.
\end{eqnarray*}
Hence we can conclude that
\begin{eqnarray*}
\frac{i}{4\pi} \langle \mathcal{F}g,  \mathcal{F}h\rangle = \langle \mathcal{F}g,  h\rangle - \langle \mathcal{F}^* g,  h\rangle.
\end{eqnarray*}
It is then directly verified that
\begin{eqnarray*}
\frac{i}{4\pi} \langle  \mathcal{F}^*\mathcal{F}g,  h\rangle = \langle \mathcal{F}g,  h\rangle - \langle \mathcal{F}^* g,  h\rangle =\frac{i}{4\pi} \langle \mathcal{F} \mathcal{F}^* g,  h\rangle
\end{eqnarray*}
so that
$$
(\mathcal{I}+ \frac{i}{4\pi} \mathcal{F})^*(\mathcal{I}+ \frac{i}{4\pi} \mathcal{F} )= \mathcal{I} + \frac{i}{4\pi} \big( \mathcal{F}- \mathcal{F}^*-\frac{i}{4\pi} \mathcal{F}^*\mathcal{F}\big) = \mathcal{I},
$$
and consequently $(\mathcal{I}+ \frac{i}{4\pi} \mathcal{F} )(\mathcal{I}+ \frac{i}{4\pi} \mathcal{F})^*=\mathcal{I}$ and $\mathcal{F}^*\mathcal{F}=\mathcal{F}\mathcal{F}^*$.
\end{proof}


\section{Proof of Lemma \ref{Section operator mu/|mu| phase interval}} \label{Section appendix proof of accumulation interval}
\begin{proof}
Proof of the following statement: if $k$ is not an interior transmission eigenvalue, then the  accumulation point of $\frac{\mu_n}{|\mu_n|}$ cannot be $-1$ and $\frac{\mu_n}{|\mu_n|} = e^{i \eta_n}$ with $\eta_n \in [0,\pi-2\eta_\delta)$ with $\eta_\delta \in (0,\pi/2]$. We follow the proof in  \cite[pp. 190]{cakoni2016qualitative}.

If $k$ is not an interior transmission eigenvalue, then $\mathcal{P}_\Omega \mathcal{T} \mathcal{P}_\Omega: Y_\Omega \to Y_\Omega$ is coercive and so that $\mathcal{F}$ is injective (as is seen from \eqref{Section operator Omega fac coercive factorization}), thereby $\frac{\mu_n}{|\mu_n|}$ lies on a unit circle.   Now for any $(\zeta_n,\mu_n)$ where
 $
\mathcal{F} \zeta_n = \mu_n \zeta_n
 $,
we derive
\begin{eqnarray*}\label{Section appendix F eigs distribution eqn1}
\mu_n =\langle \mathcal{F} \zeta_n,\zeta_n \rangle_{L^2(\mathbb{S}^{d-1})} = \langle \mathcal{F} \zeta_n,\zeta_n \rangle_{L^2(\mathbb{S}^{d-1})} =  \langle   \mathcal{T}   \mathcal{H}\zeta_n,\mathcal{H}\zeta_n \rangle_{L^2(\Omega)},
\end{eqnarray*}
taking the imaginary part of the above equation    we can derive that 
$$
\Im \mu_n \overset{Prop~\eqref{Section operator middle positive definite prop}}{>}0,
$$
this shows that $\frac{\mu_n}{|\mu_n|}$ lies on the upper unit circle in the complex plane.

Since $\mathcal{F}$ is compact, then $\frac{\mu_n}{|\mu_n|}$ accumulates to some point on the unit circle. We now show that  the  accumulation point of $\frac{\mu_n}{|\mu_n|}$ cannot be $-1$. In this regard, we set $ \varphi_n = \frac{1}{\sqrt{|\mu_n|}}\mathcal{H}\zeta_n$ for all $n=0,1,\cdots$, it immediately follows that
\begin{equation} \label{Section appendix F eigs distribution eqn2}
\langle \mathcal{T} \varphi_n,\varphi_n \rangle_{L^2(\Omega)} = \frac{\mu_n}{|\mu_n|}.
\end{equation}
Note that $\mathcal{T}$ is coercive, then it follows that $\{\varphi_n\}_{n=0}^\infty$ is a bounded sequence in the separable Hilbert space $L^2(\Omega)$ whereby there exists a subsequence $\varphi_{n_m}$ that weak$^*$ converges to $\varphi \in L^2(\Omega)$. By a density argument and the weak$^*$ convergence, one has that $\varphi \in Y_\Omega$. Since $ \mathcal{C}$  is compact, then   there exists a subsequence, still denoted as  $\varphi_{n_m}$ without loss of any rigor, such that $\mathcal{C} \varphi_{n_m}$ converges strongly to $\mathcal{C} \varphi$ as $m\to \infty$. Now we can have that  
$$
\langle   \mathcal{C}  \varphi_{n_m}, \varphi_{n_m} \rangle_{L^2(\Omega)} = \langle   \mathcal{C}  (\varphi_{n_m} -\varphi), \varphi_{n_m} \rangle_{L^2(\Omega)}  + \langle   \mathcal{C}  \varphi, \varphi_{n_m} \rangle_{L^2(\Omega)} \to \langle   \mathcal{C}  \varphi, \varphi \rangle_{L^2(\Omega)}.
$$
As a consequence, taking the imaginary part of \eqref{Section appendix F eigs distribution eqn2}
\begin{eqnarray*}
\Im \frac{\mu_{n_m}}{|\mu_{n_m}|} = \Im \langle \mathcal{C} \varphi_{n_m},\varphi_{n_m} \rangle 
\end{eqnarray*}
and as $m\to \infty$, we have that 
$$
0=\Im \langle \mathcal{C}  \varphi, \varphi \rangle_{L^2(\Omega)}=\Im \langle \mathcal{T}  \varphi, \varphi \rangle_{L^2(\Omega)},
$$
as a consequence $\varphi \in Y_\Omega$ must vanish since $k$ is not an interior transmission eigenvalue.

From \eqref{Section appendix F eigs distribution eqn2}, we can derive that 
\begin{eqnarray*}
  \frac{\mu_{n_m}}{|\mu_{n_m}|}  - \langle   \mathcal{C}  \varphi_{n_m}, \varphi_{n_m} \rangle_{L^2(\Omega)} = \langle   \mathcal{T}_{b}   \varphi_{n_m}, \varphi_{n_m} \rangle_{L^2(\Omega)} \ge k^2 q_{\inf} \|\varphi_{n_m}\|^2_{L^2(\Omega)}.
\end{eqnarray*}
Note that the left hand side goes to $-1$ as $m \to \infty$,  however the right hand is always non-negative which yields a contradiction. This completes the proof. 
\end{proof}

 \section{Proof of Lemma \ref{Section FM and a LSM lemma phiz S Omega} } \label{Section appendix FM and a LSM lemma phiz S Omega}
\begin{proof}
The proof follows \cite[Theorem 4.6]{kirsch2008factorization}.  If $z \in \Omega$, then there exists some mall $\epsilon>0$ such that $\overline{B(z,\epsilon)} \subset \Omega$, and we define  $w_z(x)= \chi_{|x-z|}\Phi(x,z)$ as in the Lemma. As a consequence $w_z$ and $\Phi(\cdot,z)$ have the same Cauchy data so that by the Green's formula
\begin{eqnarray*}
w_z(x) &=& \int_{\partial \Omega} \Big(\Phi(x,y) \frac{\partial w_z(y)}{\partial \nu_y} -  w_z(y) \frac{\partial \Phi(x,y)}{\partial \nu_y} \Big) \ind s_y - \int_\Omega \Big( \Delta_y w_z(y) +k^2 w_z(y) \Big) \Phi(x,y) \ind y \\
&=&\int_{\partial \Omega} \Big(\Phi(x,y) \frac{\partial \Phi(y,z)}{\partial \nu_y} -  \Phi(y,z)\frac{\partial \Phi(x,y)}{\partial \nu_y} \Big) \ind s_y   - \int_\Omega \Big( \Delta_y w_z(y) +k^2 w_z(y) \Big) \Phi(x,y) \ind y,
\end{eqnarray*}
since the first integral on the right hand side vanishes by applying the Green's formula outside $\Omega$ (and by noting that $\Phi(\cdot,x)$ and $\Phi(\cdot,z)$ are both radiating solutions to the Helmholtz equation), then we obtain that
\begin{eqnarray*}
w_z(x) =  - \int_\Omega \Big( \Delta_y w_z(y) +k^2 w_z(y) \Big) \Phi(x,y) \ind y =\int_\Omega   \Phi(x,y) E_z(y)\ind y,
\end{eqnarray*}
whereby their far-field patterns are the same, i.e.,
\begin{eqnarray*}
e^{-ik\hat{x} \cdot z} = \Phi^\infty(\cdot,z)  = w_z^\infty =   \int_\Omega   e^{-i k \hat{x} \cdot y} E_z(y)\ind y= \left( \mathcal{H}^* E_z \right)(\hat{x}).
\end{eqnarray*}

If $z \not \in \Omega $ and $\phi_z  =\mathcal{H}^* \varphi$, then by Rellich's lemma \cite[Lemma 1.2]{kirsch2008factorization} and unique continuation
$$
\int_\Omega   \Phi(x,y) \varphi(y)\ind y =\Phi(x,z), \quad \forall x \in \mathbb{R}^d \backslash \overline{\Omega}, \quad x \not=z.
$$ 
This is a contradiction since the right hand side has a singularity at $x=z$. This completes the proof.
\end{proof}
 
\bibliographystyle{SIAM}

\end{document}